\theoremstyle{plain}
\newtheorem{Theo}{Theorem}[section]
\newtheorem{Lem}[Theo]{Lemma}
\newtheorem{Cor}[Theo]{Corollary}
\newtheorem{Pro}[Theo]{Proposition}
\newtheorem*{InTheo}{Theorem}
\newtheorem*{InLem}{Lemma}
\theoremstyle{definition}
\newtheorem{Def}[Theo]{Definition}
\newtheorem{Ex}[Theo]{Example}
\theoremstyle{remark}
\newtheorem{Rem}[Theo]{Remark}
\newtheorem{Not}[Theo]{Notation}
\newtheorem{Exp}[Theo]{Explanation of the tables}
\newcommand{\dual}[1]{#1^{\vee}}
\DeclareMathOperator\Hom{Hom}
\DeclareMathOperator\End{End}
\DeclareMathOperator\Sym{Sym}
\DeclareMathOperator\Id{Id}
\title{The Galois closure for rings and some related constructions}
\author{Alberto Gioia \\
Fakult\"at f\"ur Mathematik, University of Vienna, Austria.\\
{\lowercase{\url{alberto.gioia@univie.ac.at}}}}
\date{February 4, 2015}
\begin{document}

\begin{abstract}
Let $R$ be a ring and let $A$ be a finite projective $R$-algebra of rank $n$.
Manjul Bhargava and Matthew Satriano have recently constructed an $R$-algebra
$G(A/R)$, the Galois closure of $A/R$. Many natural questions were asked at the
end of their paper. Here we address one of these questions, proving the
existence of the natural constructions they call intermediate $S_n$-closures. We
will also study properties of these constructions, generalizing some of their
results, and proving new results both on the intermediate $S_n$-closures and on 
$G(A/R)$.
\end{abstract}

\maketitle

%%%%%%%%%%%%%%%%%%%%%%%%%%%%%%%%%%%%%%%%%%%%%%%%%%%%%%%%%%%%%%%%%%%%%%%%%%%%%%%

\section{Introduction}\label{galois-closure-introduction}

Let $K\to L$ be a finite separable field extension, and let $f$ in $K[Z]$ be
such that $L\cong K[Z]/(f)$. A Galois closure of $L$ over $K$ is a minimal
Galois extension of $K$ containing $L$. Equivalently, it is a field extension
$M$ of $K$, containing $L$, minimal with the property that $f$ splits into
linear factors in $M[Z]$.

Now assume that $f$ has degree $n$, and the Galois group of $f$ is $S_n$, the
symmetric group on $n$ letters. In this case we can construct a Galois closure
as follows: let $K_0$ be $K$, and let $f_0$ be $f$. Given $K_i$ and $f_i$ we
define $K_{i+1}$ as $K_i[X_{i+1}] /(f_i(X_{i+1}))$. Denote by $x_{i+1}$ the
class of $X_{i+1}$ in $K_{i+1}$. Let $f_{i+1}$ be the quotient of $f_i$ by $Z -
x_{i+1}$ in $K_{i+1}[Z]$. The assumption that the Galois group of $f$ is $S_n$
guarantees that for $i= 0,\ldots, n$ the ring $K_i$ is a field, and that $K_n$
is the field we wanted to construct. In particular $x_1,\ldots, x_n$ are the
roots of $f$ in $K_n$.

Let $R$ be a ring (commutative, with identity) and let $A$ be a locally free
$R$-algebra (commutative, with identity) of rank $n$ (see Definition
\ref{def-rank-n}). Manjul Bhargava and Matthew Satriano in \cite{BhargSat}
defined an $R$-algebra $G(A/R)$, which generalizes the Galois closure of an
$S_n$-extension of $K$. In \cite[Question $4$]{BhargSat} they asked whether it
is possible to construct algebras $G^{(i)}(A/R)$ for $i = 1, \ldots, n$, with
$G^{(n)}(A/R) = G(A/R)$, generalizing the intermediate $K_i$ in the construction
above.

In this paper we construct such algebras, which we call $m$-closures, for all
$0\leq m \leq n$. It is sometimes more convenient to use a different
description, which we will call $S$-closure, with $S$ an arbitrary finite set.
In the case of the intermediate $K_i$ for fields, this means that we label the
roots using the set $S$ instead of $\{1,\ldots, i\}$. This will be made precise
in Section \ref{sec-S-closures}.

We will start by recalling some preliminary results on locally free modules and
algebras in Section \ref{galois-closure-preliminaries}. In particular, we 
present the definitions of generic element, and of characteristic polynomials 
of endomorphisms, in a finite locally free module of constant rank. In Section 
\ref{sec-S-closures} we will give the definition, an explicit construction, and 
prove some basic properties of the $S$-closures.

We will then prove the ``product formula'' (Theorem \ref{theo-prod-formula}).
This formula is a generalization of theorem $6$ in \cite{BhargSat}, and
expresses the $S$-closure of a product of $R$-algebras of finite rank in terms
of $T$-closures of the factors, for various subsets $T$ of $S$. This will be
proved in Section \ref{sec-product-formula}. In the same section we will also
prove some consequences of this formula.

In Section \ref{sec-furt-gen}, we will relate the $S$-closures to certain
constructions defined in \cite{FonctNorme} by Daniel Ferrand.

After that, in Section \ref{sec-monogenic} and Section
\ref{galois-closure-examples}, we will study special cases, giving examples and
explicit computations. In a paper currently in preparation, we will study the 
action of the group of automorphisms of the set $S$ on the $S$-closure of an 
algebra, comparing it with the action of the Galois group of a Galois extenion 
of fields.

%%%%%%%%%%%%%%%%%%%%%%%%%%%%%%%%%%%%%%%%%%%%%%%%%%%%%%%%%%%%%%%%%%%%%%%%%%%%%%%

\section{Preliminaries}\label{galois-closure-preliminaries}

In this section we will collect results needed to define $S$-closures. Nothing
is new, the results are included for convenience. All rings and algebras 
considered, in this section and later on, are associative, commutative, and 
have an identity element, except when otherwise specified.

\begin{Def}\label{def-rank-n}
Let $R$ be a ring. Let $n\geq 0$. Let $M$ be an $R$-module. Following
\cite[Footnote $2$]{BhargSat} we call $M$ an \emph{$R$-module of rank $n$} if it
is finitely generated, projective, and locally free of constant rank $n$.
Similarly an $R$-algebra of rank $n$ is an $R$-algebra that is of rank $n$ as 
an $R$-module.
\end{Def}

It makes sense to give a short description of the definition of characteristic 
polynomials of endomorphisms of modules of rank $n$, since they will play a 
fundamental role in the constructions we will consider. We follow \cite[Chapter 
$4$]{GaloisSchemes} for the definition of the trac: when $M$ is a finitely 
generated projective $R$-module, and $N$ is any $R$-module, we have an 
isomorphism
\begin{equation}\label{isom-trace}
\begin{array}{cccc}
\Phi\colon & N\otimes_R\dual{M} & \to & \Hom_R (N,M)\\
& n\otimes f & \mapsto & \Big( x\mapsto f(x) n \Big)
\end{array}
\end{equation}
where $\dual{M}$ is the $R$-module $\Hom_R (M,R)$. (See \cite[Section
$4.8$]{GaloisSchemes}.) Taking $N = M$ one can define the \emph{trace map} 
$s_1\colon \End_R(M)\to R$, as the composition of $\Phi^{-1}$ with the
natural map $M\otimes_R \dual{M} \to R$ sending $m\otimes f$ to $f(m)$.

Since the exterior power $\bigwedge^i M$ of an $R$-module of rank $n$ is an
$R$-module of rank $\binom{n}{i}$, given $f\in \End (M)$, one can define the
\emph{$i$-th trace} of $f$, denoted $s_i(f)$, to be the trace of the
endomorphism induced by $f$ on $\bigwedge^i M$. The $n$-th trace of $f$ is
called the \emph{determinant} of $f$. For every endomorphism $f$ of $M$ define
the \emph{characteristic polynomial} of $f$, denoted $P_f(X)$, as the
determinant of the endomorphism $(\Id \otimes X - f\otimes \Id)$ of $M\otimes_R
R[X]$. Note that this is a monic polynomial in $R[X]$, and that we have
\[
P_f(X) =  \sum_{i= 0}^{n} (-1)^i s_{i}(f) X^{n-i}.
\]

If $A$ is a finite projective $R$-algebra, then we have a map $A\to \End(A)$
sending $a\in A$ to multiplication by $a$. Composing this map with the $s_i$
we get maps $A\to R$, which we denote again $s_i$; in this case $s_n$ will be
called \emph{norm map}. Similarly, we obtain the characteristic polynomial of 
$a$, denoted $P_a(X)$.

\begin{Rem}\label{rem-Cay-Ham}
Note that Cayley-Hamilton theorem holds, i.e. for all endomorphisms $f$ of an
$R$-module $M$ of rank $n$, we have $P_f(f) = 0$. In fact, this holds for free
modules and if an element of $\End (M)$ is zero locally at every prime of $R$
then it is zero. Moreover if $A$ is an $R$-algebra, then $P_a(a)$ is zero.
\end{Rem}

The following construction is only a formal variant of the usual tensor power of
an $R$-algebra. This form will be useful later to simplify the notation, 
especially in Section \ref{sec-product-formula}.

\begin{Def}\label{def-tensor-power-S}
Let $A$ be an $R$-algebra. For any finite set $S$ the \emph{tensor power of $A$
indexed over $S$} is an $R$-algebra $A^{\otimes S}$ given with a map
$\varepsilon_s\colon A\to A^{\otimes S}$ for every $s\in S$, such that for any
$R$-algebra $B$ with a map $\zeta_s\colon A \to B$ for every $s\in S$ we have a
unique map $\varphi\colon A^{\otimes S} \to B$ making the following diagram
commutative for every $s\in S$:
\[
\xymatrix{
A \ar[r]^{\varepsilon_s} \ar[d]_{\zeta_s}&
A^{\otimes S} \ar[dl]^{\varphi} \\
B
}
\]
\end{Def}

\begin{Rem}\label{rem-tensor-power}
Note that any bijection $\{1,\ldots, \#S\} \to S$ induces a unique isomorphism 
$A^{\otimes \#S} \to A^{\otimes S}$ compatible with the natural maps.
\end{Rem}

Finally, we introduce generic elements, which we will use to construct the 
$S$-closure explicitly. To do that we define the \emph{symmetric algebra}. For 
more detail see Bourbaki \cite[Chapitre $\mathrm{III}$, \S $6$]{BourbakiAlg13}.

\begin{Def}\label{def-symmetric-algebra}
Let $M$ be an $R$-module. The \emph{symmetric algebra} of $M$ is an $R$-algebra
$\Sym M$ given with an $R$-module map $\varepsilon \colon M\to \Sym M$ such 
that the map
\[
\Hom_{\mathrm{R}\textrm{-alg}}(\Sym M, A) \to \Hom_R(M, A)
\]
\[
\varphi\mapsto \varphi\circ \varepsilon
\]
is a natural bijection.
\end{Def}

\begin{Def}\label{def-gen-elem}
Let $M$ be a finitely generated projective $R$-module. Tensoring the identity of
$M$ with the natural map $\dual{M} \to \Sym(\dual{M})$ we get a morphism
$M\otimes_R \dual{M} \to M\otimes_R \Sym(\dual{M})$. Composing with the
isomorphism $\Phi^{-1}$ defined in \eqref{isom-trace} (with $N$ equal $M$), we
get an $R$-module map
\[
\End(M) \to M\otimes_R \Sym(\dual{M}).
\]
We call the \emph{generic element} of $M$, denoted $\gamma_M$ or simply
$\gamma$, the image of $\Id_M$ via this map.
\end{Def}

\begin{Ex}\label{ex-gen-elem}
Let $M$ be a free $R$-module with basis $e_1,\ldots, e_n$. Let $X_1,\ldots, X_n$ 
be the dual basis. Then the polynomial ring $R[X_1,\ldots, X_n]$ with the map 
$X_i \mapsto X_i$ has the universal property of $\Sym (\dual{M})$. The generic 
element of $M$ is then
\[
\gamma = \sum_{i=1}^n e_i \otimes X_i
\]
in $M\otimes_R\Sym(\dual{M})$. In general, the symmetric algebra of an 
$R$-module $M$ is a graded algebra, with the degree $0$ part isomorphic to $R$ 
and the degree $1$ part isomorphic to $M$. The generic element is always an 
element of $ M\otimes_R \dual{M}$. We will denote by $\Sym^n M$ the degree $n$ 
part of $\Sym M$.
\end{Ex}

Before proving the main lemma about the generic element, we will need the
following result.

\begin{Lem}\label{lem-bidual}
Let $M$ be a locally free $R$-module. Then the map
\[
M\to \dual{(\dual{M})}
\]
\[
m \mapsto (f \mapsto f(m))
\]
is injective. If moreover $M$ is finitely generated, then the map is an
isomorphism.
\end{Lem}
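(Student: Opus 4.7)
The plan is to handle the two assertions separately: a stalkwise argument for injectivity, and a direct-summand argument for the isomorphism. Let $\eta_M\colon M\to \dual{(\dual M)}$ denote the map in the statement.

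For injectivity I would reduce to a local statement. An element of $M$ is zero if and only if it vanishes in $M_\mathfrak{p}$ for every prime $\mathfrak{p}$, so I fix $m \in \ker \eta_M$ and aim to show $m/1 = 0$ in $M_\mathfrak{p}$. Since $M$ is locally free, $M_\mathfrak{p}$ is free on some basis $\{e_i\}_{i\in I}$; writing $m/1 = \sum a_i e_i$ with finite support, it suffices to show that each $a_i$ vanishes. The associated coordinate functionals $\pi_i \in \dual{(M_\mathfrak{p})}$ satisfy $\pi_i(m/1) = a_i$, so the key technical point is to lift each $\pi_i$ (for $i$ in the finite support of $m/1$) to some $\tilde\pi_i \in \dual{M}$ whose image in $\dual{(M_\mathfrak{p})}$ is $\pi_i$ up to a unit of $R_\mathfrak{p}$. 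This can be done on a principal open neighborhood of $\mathfrak{p}$ on which the chosen basis extends. Since $m$ lies in $\ker\eta_M$, one then has $\tilde\pi_i(m) = 0$, which forces $a_i = 0$ in $R_\mathfrak{p}$.

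For the isomorphism I will use that a finitely generated locally free $R$-module is projective, hence a direct summand of $R^n$ for some $n$. Since $\eta$ is natural in $M$ and the functor $M\mapsto \dual{(\dual M)}$ commutes with finite direct sums, one checks directly that $\eta_R\colon R\to \dual{(\dual R)}$ is an isomorphism, and hence so is $\eta_{R^n}$. Writing $R^n = M\oplus N$, the decomposition $\eta_{R^n} = \eta_M\oplus \eta_N$ forces both $\eta_M$ and $\eta_N$ to be isomorphisms.

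The step I expect to be the main technical point is the lifting in the injectivity argument. It is routine when $M$ is finitely presented, because then $\Hom_R(M,-)$ commutes with localization and every local functional is a fraction of a global one. In the general ``locally free'' setting one instead selects a principal open $D(g)$ containing $\mathfrak{p}$ on which $M$ is free, picks up the dual coordinate functionals there as elements of $\Hom_R(M, R[g^{-1}])$, and clears denominators by multiplying by a suitable power of $g$ to land in $\dual{M}$. Once this is in place, both halves of the lemma follow without further work.
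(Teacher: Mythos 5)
Your argument for the isomorphism (writing $M\oplus N = R^n$, checking $\eta_R$ is an isomorphism, and using that $\eta$ commutes with finite direct sums) is clean and correct, and is a perfectly good alternative to the paper's one-line appeal to stalkwise isomorphism.

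The injectivity argument has a genuine gap, though, precisely at the step you flag as the main technical point. Pulling the coordinate functional $\pi_j\in\Hom_{R_g}(M_g,R_g)$ back to $\Hom_R(M,R[g^{-1}])$ is fine, but multiplying by a power of $g$ to land in $\dual{M}$ requires that the image of $M$ in $R[g^{-1}]$ be contained in $g^{-N}R$ for some $N$, and that holds only when $M$ is finitely generated. When $M_g$ is free of infinite rank one can choose a basis (hence a coordinate functional) whose values on a generating set of $M$ have unbounded powers of $g$ in the denominator — for instance $R=\mathbb{Z}$, $g=2$, $M=\mathbb{Z}^{(\mathbb{N})}$ with basis $(f_n)$, and $e_0=f_0$, $e_n=f_n+2^{-n}f_0$: then $\pi_0(f_n)=-2^{-n}$, so no power of $2$ clears denominators. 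This matters because the paper does invoke the injectivity for non-finitely-generated modules: in the proof of Lemma \ref{lem-polys} it is applied to $C\otimes_R\Sym(\dual{M})$, which is a polynomial algebra over $C$ and hence not finitely generated.

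The fix is to reuse the very device you already deploy for the second half. Any locally free module in the sense used here is projective, so $M$ is a direct summand of a free module $F$ of possibly infinite rank; injectivity of $\eta_F$ for $F=R^{(I)}$ is immediate (a nonzero element has some nonzero coordinate, detected by the corresponding projection in $\dual{F}=R^{I}$), and since $\eta$ respects the decomposition $F=M\oplus N$, injectivity of $\eta_F$ gives injectivity of $\eta_M$. This unifies both halves of the lemma under a single argument, and it also handles exactly the infinite-rank case where your lifting strategy breaks. (To be fair, the paper's own "localize to reduce to the free case" is glossing over the same issue; your effort to make it precise is what brings the difficulty to the surface.)
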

\begin{proof}
By localizing at primes of $R$, we can reduce to free modules. Here the
statement becomes: if $x$ is such that $f(x)$ is zero for all $f\in \dual{M}$
then $x$ is zero, and this is clear for a free module $M$.

If $M$ is finitely generated, then locally the map is also an isomorphism, 
hence it is an isomorphism globally.
\end{proof}

The following important property is what makes the generic element useful.

\begin{Pro}\label{pro-gen-elem}
Let $M$ be a finitely generated projective $R$-module. Let $R'$ be any
$R$-algebra. Then the map
\[
\Hom_{R\textrm{-\emph{Alg}}}(\Sym(\dual{M}), R') \to M' = M\otimes_R R'
\]
\[
\varphi\mapsto (\Id_M\otimes \varphi)(\gamma)
\]
is a natural bijection.
\end{Pro}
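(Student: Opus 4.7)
My approach is to construct an inverse as a composition of three natural identifications, and then to verify that this composite agrees with the formula in the statement.

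First, I would set up the chain of natural bijections
\[
\Hom_{R\text{-Alg}}(\Sym(\dual{M}), R') \;\cong\; \Hom_R(\dual{M}, R') \;\cong\; \dual{(\dual{M})}\otimes_R R' \;\cong\; M\otimes_R R'.
\]
The first is the universal property of the symmetric algebra (Definition \ref{def-symmetric-algebra}): an $R$-algebra map $\varphi$ is determined by its restriction $\bar\varphi=\varphi\circ\varepsilon\colon\dual{M}\to R'$. Since $M$ is finitely generated projective, so is $\dual{M}$, and the second isomorphism, sending $\ell\otimes r'$ to $g\mapsto\ell(g)r'$, is another instance of \eqref{isom-trace} (applied with the module $\dual{M}$ and $N=R'$). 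The third is obtained by tensoring $R'$ with the bidual isomorphism of Lemma \ref{lem-bidual}.

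Next, I would verify that the composite bijection agrees with $\varphi\mapsto(\Id_M\otimes\varphi)(\gamma)$. By Definition \ref{def-gen-elem}, $\gamma=(\Id_M\otimes\varepsilon)(\Phi^{-1}(\Id_M))$, so
\[
(\Id_M\otimes\varphi)(\gamma)=(\Id_M\otimes\bar\varphi)(\Phi^{-1}(\Id_M)).
\]
Writing $\Phi^{-1}(\Id_M)=\sum_i m_i\otimes g_i$, the defining relation $\sum_i g_i(x)m_i=x$ for all $x\in M$ (which says $\Phi(\Phi^{-1}(\Id_M))=\Id_M$) implies, by applying any $g\in\dual{M}$ to both sides, that $\sum_i g(m_i)g_i=g$ in $\dual{M}$. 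Transporting $\sum_i m_i\otimes\bar\varphi(g_i)$ through the identifications above produces the map $g\mapsto\sum_i g(m_i)\bar\varphi(g_i)=\bar\varphi(\sum_i g(m_i)g_i)=\bar\varphi(g)$, which is exactly $\bar\varphi$, as required.

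The step I expect to require the most care is this last identification, since it involves chasing several canonical isomorphisms simultaneously. As a sanity check, and as an alternative route, I would reduce to the free case: in the notation of Example \ref{ex-gen-elem}, a homomorphism $\varphi\colon R[X_1,\ldots,X_n]\to R'$ is nothing more than the tuple $(\varphi(X_i))_i$, and $(\Id\otimes\varphi)(\sum_i e_i\otimes X_i)=\sum_i e_i\otimes\varphi(X_i)$ is exactly that tuple in $M\otimes R'\cong(R')^n$, so the bijection is evident. Since both sides of the claimed bijection respect direct sum decompositions of $M$, writing a projective $M$ as a direct summand of a free module reduces the general case to this one.
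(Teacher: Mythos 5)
Your proof is correct, and it sets up the very same chain of natural bijections
\[
\Hom_{R\text{-Alg}}(\Sym(\dual{M}), R') \cong \Hom_R(\dual{M}, R') \cong M\otimes_R R'
\]
that the paper uses (the paper obtains $\Hom_R(\dual M,R')\cong M\otimes_R R'$ at once from \eqref{isom-trace} with the module $\dual M$ and Lemma \ref{lem-bidual}, but it is the same two isomorphisms folded together). Where the two arguments diverge is in verifying that this composite bijection is given by the formula $\varphi\mapsto(\Id_M\otimes\varphi)(\gamma)$. The paper notes that the chain shows $\Sym(\dual M)$ represents $R'\mapsto M\otimes_R R'$, observes that under $R'=\Sym(\dual M)$ the identity corresponds to $\gamma$ by the very definition of $\gamma$, and concludes by Yoneda's lemma. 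You instead carry out the identification by hand: write $\Phi^{-1}(\Id_M)=\sum_i m_i\otimes g_i$ and chase $(\Id_M\otimes\bar\varphi)(\sum_i m_i\otimes g_i)$ through the isomorphisms, using the dual-basis identity $\sum_i g(m_i)g_i=g$. Both are valid; the Yoneda argument is shorter and avoids the element chase (which, as you note, is where errors most easily creep in), while your computation is more self-contained and makes the mechanism visible without appealing to representability. Your fallback reduction to the free case via direct summands is also sound and is a reasonable sanity check, though the paper does not need it.
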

\begin{proof}
Recall from \eqref{isom-trace} that for all $R$-modules $N$ we have the
isomorphism $N\otimes_R \dual{M} \cong \Hom_R(M,N)$. By \ref{lem-bidual} we have
$M\cong \dual{(\dual{M})}$. So we already know we have natural bijections
\[
M' \cong \Hom_R(\dual{M}, R') \cong \Hom_{R\textrm{-}\mathrm{alg}}(\Sym
(\dual{M}),R').
\]
Hence $\Sym(\dual{M})$ represents the functor sending an $R$-algebra $R'$ to
the set $M'$. Taking $R' = \Sym(\dual{M})$ we get that $\gamma$ is mapped to 
the identity in $\End{(\Sym(\dual{M}))}$ by definition, hence by Yoneda's lemma 
the map has the required form.
\end{proof}

\begin{Ex}\label{ex-gen-elem-two}
Let $M$ be free of rank $n$. Write $\gamma$ as $\sum_i e_i \otimes X_i$, with
the notation introduced in \ref{ex-gen-elem}. For $x$ in $M\otimes_R R'$ the 
image of $x$ in $\Hom_{R\textrm{-\emph{Alg}}}(\Sym(\dual{M}), R')$ is
the unique $R$-algebra morphism $\varphi_x$ sending $e_i$ to $X_i(x)$ for all 
$i$. Hence, if $A$ is a free $R$-algebra, the map $\mathrm{Id}\otimes\varphi_x$ 
is the evaluation map $A[X_1,\ldots, X_n]\to R'$ sending a polynomial $f$ to
$f(s_1,\ldots, s_n)$, where $(s_1,\ldots, s_n)$ is the element of ${R'}^{ n}$
representing $x$ in the chosen basis.
\end{Ex}

\begin{Rem}\label{rem-gen-elem-charpoly}
Let $A$ be an $R$-algebra of rank $n$. Let $R\to R'$ be an $R$-algebra, and
let $a$ be in $A' = A\otimes_R R'$. Denote by $\varphi_a$ the image of $a$ in
$\Hom_{R\textrm{-\emph{Alg}}}(\Sym(\dual{A}), R')$. Let $P_\gamma$ be the
characteristic polynomial of $\gamma$ in the $\Sym(\dual{A})$-algebra
$A\otimes_R\Sym(\dual{A})$. The map $\Id_A\otimes \varphi_a \colon A\otimes_R
\Sym(\dual{A})[X]\to A'[X]$ sends $P_\gamma$ to the characteristic polynomial of
$a$. In fact, this is true for free algebras as follows easily from Example
\ref{ex-gen-elem-two}, so for an algebra of rank $n$ it is true locally at every
prime of $R$, and hence the claim follows.
\end{Rem}

%%%%%%%%%%%%%%%%%%%%%%%%%%%%%%%%%%%%%%%%%%%%%%%%%%%%%%%%%%%%%%%%%%%%%%%%%%%%%%%

\section{\texorpdfstring{$S$-closures}{S-closures}}\label{sec-S-closures}

In this section we are going to define $S$-closures and prove their existence
giving an explicit construction. We will use characteristic polynomials and
generic elements, defined in Section \ref{galois-closure-preliminaries}. We
start with the definition.

\begin{Def} \label{universal-property}
Let $A$ be an $R$-algebra of rank $n$, and let $S$ be a finite set. An
$R$-algebra $A^{(S)}$ given together with $R$-algebra maps $\alpha_s\colon A
\to A^{(S)}$ for every $s\in S$, is an \emph{$S$-closure of $A$} if for
all $R\to R'$ and all $a\in A\otimes_R R'$ the polynomial
\[
\Delta_a(X) = \prod_{s\in S} \big(X-(\alpha_s\otimes \mathrm{Id})(a)\big)
\]
divides the characteristic polynomial $P_a$ of $a$ in ${A^{(S)}}\otimes_R
R'[X]$, and the pair $\left(A^{(S)},(\alpha_s)_{s\in S} \right)$ is universal
with this property.

Being universal means that for all $R$-algebras $B$ given with maps
$\beta_s\colon A\to B$ for $s\in S$, if for all $R\to R'$ and $a\in A\otimes_R
R'$ the polynomial $\prod_s(X-(\beta_s\otimes \mathrm{Id})(a))$ divides the
characteristic polynomial of $a$ in $B\otimes_R R'[X]$, then there is a unique
morphism $\varphi\colon {A^{(S)}} \to B$ such that for every $s\in S$ the
following diagram commutes:
\[
\xymatrix{
A \ar[r]^{\alpha_s} \ar[d]_{\beta_s} & A^{(S)}
\ar^{\varphi}[dl]\\
B
}
\]
\end{Def}

\begin{Rem}\label{rem-cons-univ-prop}
From the universal property it follows (by standard argument) that if the
$S$-closure of an $R$-algebra of rank $n$ exists, then it is unique up to a
unique isomorphism. The set $\alpha_S(A) = \{\alpha_s(a) \mid a\in A, s\in S\}$ 
has the same universal property as $A^{(S)}$, hence the $S$-closure is generated 
by $\alpha_S(A)$.
\end{Rem}

\begin{Rem}
One can define the $S$-closure of a scheme $X$ that is finite locally free of
rank $n$ over a scheme $Y$ (see also the introduction of \cite{BhargSat}). We
will limit our study to the affine case.
\end{Rem}

In the rest of the seciton we give an explicit construction of the $S$-closure,
showing it exists for any $R$-algebra of rank $n$ and any finite set $S$, and we
prove some easy consequences of the construction. We will need some results for
the construction.

\begin{Lem}\label{lem-evaluations}
Let $M$ be a locally free $R$-module. Then the map
\[
\begin{split}
M& \to \prod_{\lambda\in \dual{M}} R \\
m& \mapsto (\lambda(m))_{\lambda}
\end{split}
\]
is injective.
\end{Lem}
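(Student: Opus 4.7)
The plan is to deduce this immediately from Lemma \ref{lem-bidual} by factoring the map through the bidual of $M$. Consider the map $\Psi \colon \dual{(\dual{M})} \to \prod_{\lambda \in \dual{M}} R$ sending a functional $\phi$ to the tuple $(\phi(\lambda))_{\lambda}$ of its values, and write $\iota_M \colon M \to \dual{(\dual{M})}$ for the canonical map $m \mapsto (\lambda \mapsto \lambda(m))$ from Lemma \ref{lem-bidual}. For $m \in M$ we have
\[
\Psi(\iota_M(m)) = \bigl( \iota_M(m)(\lambda) \bigr)_{\lambda} = (\lambda(m))_{\lambda},
\]
so the map in the statement is precisely the composition $\Psi \circ \iota_M$.

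The map $\Psi$ is tautologically injective: by the very definition of $\dual{(\dual{M})} = \Hom_R(\dual{M}, R)$, a functional $\phi$ is zero if and only if $\phi(\lambda) = 0$ for every $\lambda \in \dual{M}$. The map $\iota_M$ is injective by Lemma \ref{lem-bidual}, since $M$ is locally free. A composition of two injective maps is injective, so we conclude.

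No substantial obstacle appears. If one preferred a direct proof mirroring the argument of Lemma \ref{lem-bidual}, one would localize at primes to reduce to the case of a free $R$-module, where a nonzero element must have a nonzero coordinate with respect to some basis and is therefore separated from zero by the corresponding dual-basis functional. The only subtle step along that route is extending such a local functional to an element of the global dual $\dual{M}$; the factorisation through $\dual{(\dual{M})}$ above avoids this subtlety entirely by delegating it to Lemma \ref{lem-bidual}.
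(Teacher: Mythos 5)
Your proof is correct and takes exactly the same route as the paper, which simply remarks ``This is just Lemma \ref{lem-bidual}''; you have merely spelled out the factorisation through $\dual{(\dual{M})}$ that makes this immediate.
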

\begin{proof}
This is just Lemma \ref{lem-bidual}.
\end{proof}

Recall that given an $R$-module $M$ we denote by $\Sym(M)$ the symmetric
algebra of $M$ (see Definition \ref{def-symmetric-algebra}).

\begin{Lem}\label{lem-polys}
Let $M$ be an $R$-module of rank $n$, and let $C$ be an $R$-algebra. Let $t\in
C \otimes_R\Sym(\dual{M})$. Then the following are equivalent:
\begin{enumerate}
\item The element $t$ is zero.
\item For all $\lambda$ in $\Hom_R(\Sym(\dual{M}),R)$ we have $(\mathrm{Id}_{C}
\otimes \lambda)(t)$ is zero in $C$.
\item For all $R\to R'$ and all $R$-algebra morphisms $\varphi\colon
\Sym(\dual{M})\to R'$ we have $(\mathrm{Id}_C \otimes \varphi)(t)$ is zero in
$C\otimes_R R'$.
\end{enumerate}
\end{Lem}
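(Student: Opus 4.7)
The plan is to prove (1) $\Leftrightarrow$ (2) and (1) $\Leftrightarrow$ (3); since the implications (1) $\Rightarrow$ (2) and (1) $\Rightarrow$ (3) are immediate (any $R$-linear or $R$-algebra map sends zero to zero), only the two reverse implications need work.

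For (3) $\Rightarrow$ (1), I would specialize condition (3) to the particularly informative case $R' = \Sym(\dual{M})$ (with its canonical $R$-algebra structure) and $\varphi = \Id_{\Sym(\dual{M})}$; then $(\Id_C \otimes \varphi)(t) = t$, so (3) forces $t = 0$ directly.

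The real content lies in (2) $\Rightarrow$ (1), which I would handle by exploiting the grading $\Sym(\dual{M}) = \bigoplus_{i \geq 0} \Sym^i(\dual{M})$ from Example \ref{ex-gen-elem}. Any element decomposes uniquely as a finite sum $t = \sum_i t_i$ with $t_i \in C \otimes_R \Sym^i(\dual{M})$. Every $R$-linear map $\mu\colon \Sym^i(\dual{M}) \to R$ extends by zero on the other graded pieces to a $\lambda \in \Hom_R(\Sym(\dual{M}), R)$ satisfying $(\Id_C \otimes \lambda)(t) = (\Id_C \otimes \mu)(t_i)$, so condition (2) reduces to showing that each $t_i$ is zero under the hypothesis that it is annihilated by every element of $\dual{(\Sym^i(\dual{M}))}$.

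For this last step, set $N_i := \Sym^i(\dual{M})$: it is a finitely generated projective $R$-module (locally free of rank $\binom{n+i-1}{i}$, being the degree-$i$ part of a polynomial ring in $n$ variables), so chaining the isomorphism \eqref{isom-trace} with the bidual isomorphism of Lemma \ref{lem-bidual} produces a natural identification $C \otimes_R N_i \cong \Hom_R(\dual{N_i}, C)$ under which $t_i$ corresponds to the evaluation functional $\mu \mapsto (\Id_C \otimes \mu)(t_i)$. By hypothesis this functional is identically zero, whence $t_i = 0$. The one subtle point to verify carefully is that the two natural identifications really do interpret $t_i$ as the claimed evaluation functional; this reduces to a check on decomposable tensors and uses no tools beyond those already built up in the preliminaries.
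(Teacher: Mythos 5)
Your proof is correct, and for the substantive implication $(2)\Rightarrow(1)$ it takes a genuinely different route from the paper's. The paper works directly with the full symmetric algebra: it asserts that the natural map
\[
C\otimes_R \Hom_R\bigl(\Sym(\dual{M}),R\bigr)\to
\Hom_C\bigl(C\otimes_R \Sym(\dual{M}), C\bigr)
\]
is surjective, lifts an arbitrary $\mu\in\Hom_C(C\otimes_R\Sym(\dual{M}),C)$ to a finite sum $\sum c_i\otimes\lambda_i$ to conclude $\mu(t)=0$, and then invokes Lemma \ref{lem-evaluations} over $C$. You instead split $t=\sum_i t_i$ along the grading $\Sym(\dual{M})=\bigoplus_i\Sym^i(\dual{M})$, observe that each functional on a graded piece extends by zero to all of $\Sym(\dual{M})$, and thereby reduce to killing each $t_i$ inside $C\otimes_R N_i$ with $N_i=\Sym^i(\dual{M})$ \emph{finitely generated} projective; there the identification $C\otimes_R N_i\cong C\otimes_R\dual{(\dual{N_i})}\cong\Hom_R(\dual{N_i},C)$, built from \eqref{isom-trace} and Lemma \ref{lem-bidual}, carries $t_i$ to the functional $\mu\mapsto(\Id_C\otimes\mu)(t_i)$, which hypothesis (2) forces to vanish. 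What this buys you: your argument never leaves the finite projective setting where the duality machinery of the preliminaries is unambiguous, whereas the paper's surjectivity claim involves the infinitely generated module $\Sym(\dual{M})$ and is in fact delicate --- for a free $R$-module with countable basis, the map $C\otimes_R\prod_{\mathbb{N}} R\to\prod_{\mathbb{N}} C$ need not be surjective (e.g.\ $R=\mathbb{Z}$, $C=\mathbb{Q}$), so your reduction to the graded pieces supplies a finiteness that the paper's proof leaves implicit. The one point you flagged for verification --- that the composite isomorphism sends $t_i$ to the stated evaluation functional --- does hold and is immediate on decomposable tensors $c\otimes n\mapsto(\mu\mapsto\mu(n)c)$, exactly as you anticipated.
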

\begin{proof}
If $t$ is zero then both \emph{2} and \emph{3} hold.

Suppose \emph{3} holds. Then we can take $R' = \Sym(\dual{M})$ and $\varphi$
the identity map, so $t$ is zero and \emph{1} and \emph{3} are equivalent.

Suppose \emph{2} holds. Since $\Sym(\dual{M})$ is locally free, the natural map
\[
C\otimes_R \Hom_R\left(\Sym(\dual{M}),R\right)\to
\Hom_C\left(C\otimes_R \Sym(\dual{M}), C\right)
\]
is surjective (because it is surjective at every prime of $R$). Let $\mu$ be an
element of $\Hom_C(C\otimes_R \Sym(\dual{M}), C)$ and let $\nu$ be an element of
$C\otimes_R \Hom_R\left(\Sym(\dual{M}),R\right)$ mapping to $\mu$. Write $\nu$
as $\sum_i c_i \otimes \lambda_i$. Then we have
\[
\mu(t) = \sum_{i} c_i(\mathrm{Id}_{C}\otimes \lambda_i(t))
\]
and this is zero, since for all $i$ we have $(\mathrm{Id}_C\otimes \lambda_i)(t)
= 0$. Then Lemma \ref{lem-evaluations} with $R$ equal $C$ and $M$ equal
$C\otimes_R \Sym(\dual{M})$ implies that $t$ is zero. Hence \emph{1} and
\emph{2} are equivalent.
\end{proof}

We can now prove that the $S$-closure of a rank $n$ algebra exists. We first
introduce the notation we will use in the proof.

\begin{Not}\label{not-ideal}
Let $A$ be an $R$-algebra of rank $n$. We will write $A_{\Sym}$ for $A\otimes_R
\Sym(\dual{A})$. Let $S$ be a finite set. Note that we have an isomorphism
\[
A^{\otimes S}_{\Sym} = (A\otimes_R \Sym(\dual{A}))^{\otimes S} \cong A^{\otimes
S}\otimes_R
\Sym(\dual{A})
\]
where the first tensor power is taken over $\Sym(\dual{A})$ and the last over
$R$. We will use this isomorphism without spelling it out. For $s\in S$ we
denote by the same symbol both the natural map $\varepsilon_s \colon A \to
A^{\otimes S}$ and its base change $\varepsilon_s\colon A_{\Sym}\to
A_{\Sym}^{\otimes S}$.

Let $P_\gamma (X)\in A_{\Sym}[X]$ be the characteristic polynomial of the
generic element. Define the polynomial
\[
\Delta_\gamma = \prod_{s\in S} \big( X - \varepsilon_s(\gamma) \big)
\]
in $A_{\Sym}^{\otimes S}[X]$. Note that since $\Delta_\gamma$ is monic, division
with remainder of $P_\gamma$ by $\Delta_\gamma$ can be done in
$A_{\Sym}^{\otimes S}[X]$ and gives a unique quotient and remainder, with the
remainder of degree less than $\#S$. We write
\[
P_\gamma = \Delta_\gamma Q_\gamma + T_\gamma
\]
and 
\[
T_\gamma = \sum_{0\leq i<\#S} t_i X^i
\]
with $t_i$ in $A_{\Sym}^{\otimes S}$.

In $A^{\otimes S}$ we define the following ideal:
\[
J^{(S)} = \left\langle (\mathrm{Id}_{A^{\otimes S}}\otimes\lambda)(t_i) \mid i
\geq 0, \lambda \in \Hom_R(\Sym(\dual{A}),R)\right\rangle.
\]
We will prove that $A^{\otimes S}/J^{(S)}$ is an $S$-closure for $A$.
\end{Not}

\begin{Pro}[Construction of $A^{(S)}$]\label{pro-gal-constr}
Let $A$ be an $R$-algebra of rank $n$ and let $S$ be a finite set. Let 
$J^{(S)}$ be the ideal defined in \ref{not-ideal}. Let $C = A^{\otimes 
S}/J^{(S)}$, and define a map $\zeta_s$ for every $s\in S$ by composing the
natural map $A\to A^{\otimes S}$ with the quotient map. Then $(C,\zeta_s)$ is an 
$S$-closure of $A$.
\end{Pro}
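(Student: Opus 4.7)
The plan is to separate the claim into the two halves of Definition~\ref{universal-property}: first that $C = A^{\otimes S}/J^{(S)}$ satisfies the divisibility property, then that it is universal with this property. In both halves, the generic element $\gamma$ does all the work. Proposition~\ref{pro-gen-elem} specializes statements about $\gamma$ (with coefficients in $\Sym(\dual{A})$) to statements about arbitrary $a \in A \otimes_R R'$ via the algebra map $\varphi_a \colon \Sym(\dual{A}) \to R'$ attached to $a$, while Lemma~\ref{lem-polys} translates between the vanishing of an element of $C \otimes_R \Sym(\dual{A})$ and the vanishing of all its $\lambda$-specializations in~$C$.

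For the divisibility, I would first observe that the defining generators $(\mathrm{Id} \otimes \lambda)(t_i)$ of $J^{(S)}$ are tautologically zero in $C$, so by Lemma~\ref{lem-polys} the images $\bar{t}_i$ of the $t_i$ in $C \otimes_R \Sym(\dual{A})$ vanish. Reducing the identity $P_\gamma = \Delta_\gamma Q_\gamma + T_\gamma$ modulo $J^{(S)}$ then yields $P_\gamma = \bar{\Delta}_\gamma \bar{Q}_\gamma$ in $C \otimes_R \Sym(\dual{A})[X]$, where $\bar{\Delta}_\gamma = \prod_{s \in S}(X - (\zeta_s \otimes \mathrm{Id})(\gamma))$. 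For arbitrary $a \in A \otimes_R R'$, applying $\mathrm{Id}_C \otimes \varphi_a$ sends $P_\gamma$ to $P_a$ by Remark~\ref{rem-gen-elem-charpoly} and $\bar{\Delta}_\gamma$ to $\Delta_a$, giving the required factorization in $C \otimes_R R'[X]$.

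For universality, given $(B, \beta_s)$ as in the definition, the universal property of $A^{\otimes S}$ (Definition~\ref{def-tensor-power-S}) produces a unique $\varphi \colon A^{\otimes S} \to B$ with $\varphi \circ \varepsilon_s = \beta_s$, and it remains to check that $\varphi$ kills $J^{(S)}$. Applying $\varphi \otimes \mathrm{Id}$ to $P_\gamma = \Delta_\gamma Q_\gamma + T_\gamma$, and using the compatibility $(\varphi \otimes \mathrm{Id})(\varepsilon_s(\gamma)) = (\beta_s \otimes \mathrm{Id})(\gamma)$, I land in $B \otimes_R \Sym(\dual{A})[X]$ with an identity $P_\gamma = \Delta^B_\gamma \cdot (\varphi \otimes \mathrm{Id})(Q_\gamma) + (\varphi \otimes \mathrm{Id})(T_\gamma)$, where $\Delta^B_\gamma = \prod_{s \in S}(X - (\beta_s \otimes \mathrm{Id})(\gamma))$. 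The hypothesis on $B$ applied with $R' = \Sym(\dual{A})$ and $a = \gamma$ gives $\Delta^B_\gamma \mid P_\gamma$, and since $\Delta^B_\gamma$ is monic, uniqueness of division with remainder forces $(\varphi \otimes \mathrm{Id})(T_\gamma) = 0$. Then for every $\lambda$ and $i$, $\varphi((\mathrm{Id} \otimes \lambda)(t_i)) = (\mathrm{Id}_B \otimes \lambda)((\varphi \otimes \mathrm{Id})(t_i)) = 0$, so $\varphi$ descends to a map $\bar\varphi \colon C \to B$ with $\bar\varphi \circ \zeta_s = \beta_s$; uniqueness of $\bar\varphi$ follows from that of $\varphi$.

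Beyond bookkeeping the several tensor-power structures and their base changes, the main obstacle will be the hinge identities $(\varphi \otimes \mathrm{Id})(\varepsilon_s(\gamma)) = (\beta_s \otimes \mathrm{Id})(\gamma)$ and $\varphi \circ (\mathrm{Id} \otimes \lambda) = (\mathrm{Id}_B \otimes \lambda) \circ (\varphi \otimes \mathrm{Id})$; together these let the single generic-element calculation inside $B \otimes_R \Sym(\dual{A})$ encode every instance of the hypothesis on $B$. Once these are set, the core of the argument is just uniqueness of polynomial division by a monic divisor, applied at the universal level.
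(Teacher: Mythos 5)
Your proposal is correct and takes essentially the same route as the paper: both halves hinge on the generic element, with Proposition~\ref{pro-gen-elem}/Remark~\ref{rem-gen-elem-charpoly} specializing $\gamma$ to arbitrary $a$, Lemma~\ref{lem-polys} mediating between vanishing in $C\otimes_R\Sym(\dual{A})$ and vanishing of $\lambda$-specializations, and uniqueness of monic division killing $T_\gamma$. The only cosmetic differences are that for divisibility you use the implication $2\Rightarrow 1$ of Lemma~\ref{lem-polys} to reduce the whole division identity mod $J^{(S)}$ and then specialize, whereas the paper uses $2\Rightarrow 3$ to specialize each $t_i$ directly, and for universality you spell out the uniqueness-of-division step that the paper leaves implicit; the substance is identical.
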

\begin{proof}
We first show that for all $R\to R'$ and $a\in A\otimes_R R'$ the polynomial
\[
\Delta_a = \prod_{s\in S} \left( X - (\varepsilon_s\otimes \mathrm{Id}_{R'})(a)
\right)
\]
divides $P_a$ in $C\otimes_R R'[X]$. Let $T_a$ be the remainder of the division
of $P_a$ by $\Delta_a$ in ${(A\otimes_R R')}^{\otimes S}[X]$. Let $\varphi_a
\colon \Sym(\dual{A}) \to R'$ be the unique map such that
$\mathrm{Id}_{A}\otimes \varphi_a$ sends $\gamma$ to $a$, defined in Proposition
\ref{pro-gen-elem}.

Note that $\mathrm{Id}_{A^{\otimes S}}\otimes \varphi_a$ sends
$\varepsilon_s(\gamma)$ to $\varepsilon_s(a)$ for all $s \in S$. Hence 
$\mathrm{Id}_{A^{\otimes S}}\otimes \varphi_a(\Delta_\gamma)$ is $\Delta_a$, and
since also $\mathrm{Id}_A\otimes\varphi_a(P_\gamma)$ is $P_a$ (see Remark
\ref{rem-gen-elem-charpoly}), and quotient and remainder are unique, also
$\mathrm{Id}_{A^{\otimes S}}\otimes \varphi_a (T_\gamma)$ is $T_a$.

By definition of the ideal $J^{(S)}$ for all $i \geq 0$ the image of $t_i$ in $C
\otimes_R \Sym(\dual{A})$ via the quotient map satisfies condition \emph{2} of
Lemma \ref{lem-polys}. Hence also condition \emph{3} holds, so for all $R\to R'$
and for all $a\in A\otimes R'$ the map $\mathrm{Id}_{C} \otimes \varphi_a$ sends
$t_i$ to zero in $C\otimes_R R'$. So the coefficients of $T_a$ are zero in
$C\otimes_R R'$, and hence $P_a$ is a multiple of $\Delta_a$ in $C\otimes_R
R'[X]$, as we claimed.

We are left to show that $C$ is universal with this property. Let $B$ be an
$R$-algebra with a map $\beta_s\colon A\to B$ for each $s\in S$, and such that
for all $R\to R'$ and $a\in A\otimes_R R'$ we have that $P_a$ is a multiple of
$\prod (X - \beta_s (a))$ in $B\otimes_R R'[X]$. We need to show there exists a
unique map $C\to B$ compatible with the given maps.

By the universal property of $A^{\otimes S}$ there is a unique map
$\varphi\colon A^{\otimes S} \to B$ such that $\varphi\circ\varepsilon_s =
\beta_s$ for all $s\in S$. Since $P_\gamma$ is a multiple of $\prod (X - \beta_s
(\gamma))$ in $B\otimes_R\Sym(\dual{A})[X]$, the images of the $t_i$ in
$B\otimes_R\Sym(\dual{A})$ via $\varphi\otimes \mathrm{Id}_{\Sym(\dual{A})}$ are
zero, so they satisfy condition \emph{1} of Lemma \ref{lem-polys}, and hence
also condition \emph{2}. In particular, the map $\varphi$ is zero on the ideal
$J^{(S)}$, and hence it factors through $C$, giving the required map.

This map is necessarily unique, as $C$ is generated by the images of the
$\zeta_s$.
\end{proof} 

\begin{Rem}\label{rem-finite-set-of-generators}
Let $(\lambda_i)_{i\in I}$ be a set of generators for $\Hom_R(\Sym(\dual{A}),
R)$. Then the ideal
\[
\left\langle (\mathrm{Id}_{A^{\otimes S}}\otimes \lambda_i)(t_k)\mid k\geq 0,
i\in I \right\rangle
\]
in $A^{\otimes S}$ is equal to the ideal $J^{(S)}$ defined in Notation
\ref{not-ideal}. Moreover there exists $N\geq 0$ such that all the $t_k$ are in
$\Sym^{\leq N}(\dual{A})$ and the ideal $J^{(S)}$ can be generated by ranging
over a set of generators for $\Hom_R(\Sym^{\leq N}(\dual{A}), R)$. This gives a
finite set of generators for $J^{(S)}$.
\end{Rem}

\begin{Ex}\label{ex-S-closure-free}
Let $A$ be free of rank $n$. Let $e_1,\ldots, e_n$ be a basis of $A$ and let
$X_1,\ldots, X_n$ be the dual basis. Here we have that $A^{\otimes S}\otimes_R
\Sym(\dual{A})$ is isomorphic to $A^{\otimes S}[X_1,\ldots, X_n]$ (see also
Example \ref{ex-gen-elem}). Then the $t_i$ defined in Notation \ref{not-ideal} 
are polynomials in the $X_i$ with coefficients in $A^{\otimes S}$. For all 
monomials $X_1^{i_1}\cdots X_n^{i_n}$ we can define a linear map $R[X_1,\ldots, 
X_n]\to R$ that is one on $X_1^{i_1}\cdots X_n^{i_n}$ and zero on all other 
monomials. The images of $t_i$ via these maps are its coefficients. The ideal 
$J^{(S)}$ can then be defined by the coefficients of the $t_i$. In Section 
\ref{galois-closure-examples} we will use this set of generators for the ideal 
$J^{(S)}$ to compute examples.
\end{Ex}

We will prove as a consequence of the product formula, in Proposition
\ref{pro-Sclo-nontriv}, that (excluding trivial cases as in number \emph{1} of
Proposition \ref{pro-Sclo-special-cases}) the $S$-closure of an $R$-algebra is
not the zero algebra.

We give a list of consequences of the definition and the construction. First
some notation that we will use frequently: if $S = \{1,\ldots, m\}$ we will
denote $A^{(S)}$ by $A^{(m)}$.

\begin{Pro}\label{pro-Bharg-Sat}
Let $A$ be an $R$-algebra of rank $n$. Then $A^{(n)}$ is isomorphic to the
$S_n$-closure of Bhargava and Satriano.
\end{Pro}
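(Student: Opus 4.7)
The plan is to identify $A^{(n)}$ and $G(A/R)$ by matching their universal properties and invoking uniqueness of initial objects. The critical simplification, which is specific to the case $\#S = n$, is that both $P_a(X)$ and $\Delta_a(X) = \prod_{s=1}^{n}(X - \alpha_s(a))$ are monic of degree $n$. In a commutative polynomial ring, divisibility of a monic polynomial by a monic polynomial of the same degree forces equality, so the divisibility condition in Definition \ref{universal-property} becomes the identity $P_a = \Delta_a$. Comparing coefficients of $X^{n-k}$, this yields the system of relations $s_k(a) = e_k(\alpha_1(a),\dots,\alpha_n(a))$ for $k = 1,\dots,n$, which are exactly the relations defining $G(A/R)$ in \cite{BhargSat}.

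With this reformulation in hand, I would then verify that the two universal properties coincide. Bhargava and Satriano impose the relations only for $a \in A$, whereas Definition \ref{universal-property} asks them to hold after every base change $R \to R'$ and every $a \in A\otimes_R R'$. These two conditions are equivalent through the generic element $\gamma \in A\otimes_R\Sym(\dual{A})$: by Proposition \ref{pro-gen-elem}, every $a \in A\otimes_R R'$ is the image of $\gamma$ under a unique $R$-algebra map $\varphi_a \colon \Sym(\dual{A}) \to R'$, so a single generic identity in $B \otimes_R \Sym(\dual{A})[X]$ encodes the whole family of base-changed identities, and conversely part 3 of Lemma \ref{lem-polys} shows that the coefficients $t_i$ from Notation \ref{not-ideal} are controlled precisely by such specialisations. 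Hence any $R$-algebra $B$ satisfying the Bhargava-Satriano relations automatically satisfies the apparently stronger base-change version, and vice versa.

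The main obstacle will be carrying out this last equivalence cleanly, in particular showing that the Bhargava-Satriano ideal in $A^{\otimes n}$ coincides with the ideal $J^{(n)}$ from Notation \ref{not-ideal}: the former is generated by evaluations at elements of $A$, while the latter is defined via all $R$-module maps out of $\Sym(\dual{A})$. I would handle this by a double inclusion, observing that the Bhargava-Satriano generators are the specialisations $(\Id_{A^{\otimes n}} \otimes \varphi_a)(t_k)$ with $\varphi_a$ the $R$-algebra map attached to $a \in A$, while the reverse inclusion follows because once one knows $t_k = 0$ after every such specialisation one gets $t_k = 0$ in $A^{\otimes n}\otimes_R\Sym(\dual{A})$ modulo the Bhargava-Satriano ideal by Lemma \ref{lem-polys}. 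Once this identification of ideals is in place, both algebras represent the same functor on $R$-algebras and are therefore canonically isomorphic, respecting the structure maps $\alpha_1,\dots,\alpha_n$.
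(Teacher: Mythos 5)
Your reformulation in the first paragraph is correct: for $\#S = n$ divisibility of monic polynomials of the same degree forces equality, and the resulting coefficient relations are the Bhargava--Satriano ones. The direction $I_{\mathrm{BS}} \subseteq J^{(n)}$ is also fine, since each $a \in A$ gives an $R$-algebra map $\varphi_a\colon \Sym(\dual{A}) \to R$ which is in particular an $R$-linear map.

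The gap is in the reverse inclusion $J^{(n)} \subseteq I_{\mathrm{BS}}$, which is actually the heart of the matter. You claim that ``once one knows $t_k = 0$ after every such specialisation one gets $t_k = 0$ in $A^{\otimes n}\otimes_R\Sym(\dual{A})$ modulo the Bhargava--Satriano ideal by Lemma \ref{lem-polys},'' but Lemma \ref{lem-polys} does not deliver this. Conditions \emph{2} and \emph{3} of that lemma require vanishing under \emph{all} $R$-module maps $\lambda\colon\Sym(\dual{A})\to R$, or equivalently under all $R$-algebra maps $\Sym(\dual{A})\to R'$ as $R'$ ranges over \emph{all} $R$-algebras. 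What you know by hypothesis is only vanishing under the special algebra maps $\varphi_a$ with $a$ in $A$ itself (target $R' = R$), which is a strictly weaker condition. For $m < n$ the paper shows in Example \ref{ex-not-bcinv} that this discrepancy is real: the naive ideal generated by $a \in A$ alone is genuinely smaller than $J^{(m)}$. So the step you invoke is false in general and is exactly the delicate point at $m = n$. The fact that it does hold when $m = n$ is the content of Theorem 1 of Bhargava--Satriano (that $G(A/R)$ commutes with base change), which is nontrivial and is precisely what the paper's own proof cites; your argument silently assumes it rather than proving it, and Lemma \ref{lem-polys} cannot substitute for it.
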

\begin{proof}
The algebra $G(A/R)$ with the natural maps $f_i\colon A\to G(A/R)$ for $i =
1,\ldots,n$, has the following universal property: for every $a\in A$ the
elements $f_i(a)$ are roots of the characteristic polynomial in $G(A/R)[X]$ and
given an $R$-algebra $B$ together with maps $\beta_i\colon A\to B$ for $i =
1,\ldots, n$ such that for all $a\in A$ we have
\[
P_a(X) = \prod_i (X - \beta_i(a))
\]
there is a unique map $G(A/R) \to B$ compatible with the natural maps. Since the
construction of $G(A/R)$ commutes with base change (theorem $1$ in
\cite{BhargSat}) it also has the universal property of $A^{(n)}$.
\end{proof}

\begin{Rem}\label{rem-naive-def}
The Galois closure by Bhargava and Satriano is constructed as the quotient of
$A^{\otimes n}$ (for $A$ an $R$-algebra of rank $n$) by the ideal generated by
the differences of the coefficients of $P_a$ and of $\prod_{i} (X -
\varepsilon_i(a))$ for all $a\in A$. A similar construction for the $m$-closure
with arbitrary $m$ would be to take the quotient of $A^{\otimes m}$ by the ideal
generated by the coefficients of the remainder in the division of $P_a$ by
\[
\Delta_a = \prod_{i = 1}^m (X - \varepsilon_i(a))
\]
for all $a\in A$. The fact that $G(A/R)$ commutes with base change is surprising
and nontrivial, since we add in principle more relations if we require that
$P_a$ is a multiple of $\Delta_a$ for $a$ in any base change of $A$. We give an
example of this in Example \ref{ex-not-bcinv}, where we show that the naive
construction do not always commute with base change for $m < n$.
\end{Rem}

\begin{Pro}\label{pro-Sclo-special-cases}
Let $A$ be an $R$-algebra of rank $n$ and let $S$ be a finite set. Then:
\begin{enumerate}
\item If $\#S > n$ then $A^{(S)}$ is zero.
\item If $S=\varnothing$ then $A^{(S)}$ is $R$.
\item If $S=\{s\}$ then $A$ together with the identity map $A \to A$ is an
$S$-closure of $A$.
\end{enumerate}
\end{Pro}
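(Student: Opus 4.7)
The plan is to address each of the three items separately, in each case exhibiting a concrete candidate for $A^{(S)}$ and verifying the two halves of Definition \ref{universal-property} (the divisibility of $P_a$ by $\Delta_a$, and the universal property).

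For item 1, I would apply the divisibility condition with the test element $a = 0 \in A$. Regardless of what $A^{(S)}$ turns out to be, we have $P_0(X) = X^n$ (the characteristic polynomial of multiplication by $0$ on a rank $n$ module) and $\Delta_0(X) = X^{\#S}$ (since ring maps send $0$ to $0$). The defining condition forces $X^{\#S} \mid X^n$ in $A^{(S)}[X]$. Since $\#S > n$ and $X^n$ is monic, the only way such a divisibility can hold is if $1 = 0$ in $A^{(S)}$, so $A^{(S)}$ is the zero ring. Equivalently, one checks directly that the zero ring equipped with the unique maps from $A$ trivially satisfies the universal property, and invokes the uniqueness of the $S$-closure noted in Remark \ref{rem-cons-univ-prop}.

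For item 2, when $S = \varnothing$ the product $\Delta_a$ is the empty product $1$, which divides every polynomial, so the divisibility condition is vacuous for any candidate. The base ring $R$, equipped with the empty family of maps from $A$, is the initial object in the category of $R$-algebras and hence trivially satisfies the universal property, which in this case reduces to: for every $R$-algebra $B$ there exists a unique $R$-algebra map $R \to B$. For item 3, I would take $A^{(\{s\})} = A$ with $\alpha_s = \Id_A$. The divisibility $X - a \mid P_a(X)$ in $(A\otimes_R R')[X]$ for every $R\to R'$ and every $a \in A \otimes_R R'$ is immediate from the Cayley--Hamilton statement of Remark \ref{rem-Cay-Ham} applied to the base-changed algebra $A \otimes_R R'$, since $P_a(a) = 0$ forces $X - a$ to divide $P_a$. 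The universal property is then trivial: given any $\beta \colon A \to B$ satisfying the analogous divisibility, $\varphi = \beta$ is the unique $R$-algebra map with $\varphi \circ \Id_A = \beta$. None of the three parts poses a real obstacle; the only subtle point worth underlining is the degree/leading-coefficient argument in item 1 that extracts the collapse $1 = 0$ from the monic divisibility requirement.
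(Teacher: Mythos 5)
Your proposal is correct and follows essentially the same route as the paper for all three items: a degree/leading-coefficient argument for item 1, the observation that the empty product is $1$ for item 2, and Cayley--Hamilton for item 3. The only place you diverge slightly is in item 1, where you first argue that $A^{(S)}$ itself (already known to exist by Proposition~\ref{pro-gal-constr}) must be zero by plugging in $a=0$, whereas the paper applies the degree argument directly to an arbitrary test algebra $B$ satisfying the divisibility condition, thereby showing the zero ring is universal without presupposing existence. This matters for your ``equivalently'' remark: the claim that the zero ring ``trivially'' satisfies the universal property is not literally trivial --- a ring map out of $\{0\}$ exists only into the zero ring, so one still needs to observe that any $B$ with the stated divisibility is forced to be zero, which is exactly the same degree argument applied to $B$ rather than to $A^{(S)}$. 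Either way your key idea is right; just be careful not to present that second check as contentless.
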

\begin{proof}
$\phantom{phantom}$
\begin{enumerate}
\item Let $B$ be an $R$-algebra with maps $\beta_s\colon$ $A\to B$ such that for
all $R\to R'$ and $a\in A'$ the polynomial $P_a$ is a multiple of $D_a = \prod
(X - \beta_s(a))$ in $B'[X]$. Since both $P_a$ and $\Delta_a$ are monic, the
algebra $B$ must be $\{0\}$ because by assumption the degree of $D_a$ is
strictly bigger than the degree of $P_a$. Then $\{0\}$ has the universal
property for $A^{(S)}$.
\item Since $1$ divides every polynomial, the universal property becomes: for
every $R$-algebra $B$ there exists a unique map $A^{(S)}\to B$. Since $A^{(S)}
= R$ has this property, the proof is complete.
\item By Cayley-Hamilton (see Remark \ref{rem-Cay-Ham}) the polynomial $(X - a)$
divides $P_a(X)$ in $A[X]$ for every $R\to R'$ and any $a$ in $A'$. The same is
true for any $R$-algebra $B$ with a map $f_s\colon A\to B$. Since $f_s = f_s
\circ \mathrm{Id}_A$, we have that $(A,\Id_A)$ has the universal property of
$A^{(S)}$. The proof is complete.
\qedhere
\end{enumerate}
\end{proof}

As discussed in the introduction, given a separable field extension $L =
K[X]/(f)$ of degree $n$ we can construct a Galois closure of $L/K$ by adjoining
the $n$ roots of $f$ one by one. By definition a Galois closure of $L$ must
contain all the roots of $f$, but since the sum of the roots is equal to minus
the coefficient of $X^{n-1}$ in $f$, the field extension obtained by adding
$n-1$ roots is already a Galois closure. The next theorem shows that the same
happens for $S$-closures.

\begin{Theo}\label{theo-S-n-1}
Let $A$ be an $R$-algebra of rank $n$. Then $A^{(n-1)} \cong A^{(n)}$.
\end{Theo}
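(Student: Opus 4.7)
The plan is to produce mutually inverse maps $\varphi\colon A^{(n-1)}\to A^{(n)}$ and $\psi\colon A^{(n)}\to A^{(n-1)}$ from the two universal properties. Writing $\alpha'_1,\ldots,\alpha'_n$ for the structural maps of $A^{(n)}$, the full divisibility $\prod_{i=1}^n(X-\alpha'_i(a'))\mid P_{a'}$ immediately gives the partial divisibility $\prod_{i<n}(X-\alpha'_i(a'))\mid P_{a'}$, so the universal property of $A^{(n-1)}$ supplies a unique $\varphi$ with $\varphi\circ\alpha_i=\alpha'_i$ for $i<n$. For $\psi$ I must exhibit an $n$-th $R$-algebra map $\alpha_n\colon A\to A^{(n-1)}$. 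In $A^{(n-1)}[X]$ the monic polynomial $\Delta_a^{(n-1)}$ of degree $n-1$ divides the monic polynomial $P_a$ of degree $n$, so the quotient is monic of degree $1$; comparing the $X^{n-1}$ coefficients forces it to be $X-c$ with $c=s_1(a)-\sum_{i<n}\alpha_i(a)$. This dictates the definition $\alpha_n(a):=s_1(a)-\sum_{i<n}\alpha_i(a)$, which is visibly $R$-linear with $\alpha_n(1)=1$.

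The main obstacle is to verify that $\alpha_n$ is multiplicative. Matching all coefficients in the resulting factorization $P_a=\prod_{i=1}^n(X-\alpha_i(a))$ yields the full Vieta relations $s_k(a)=e_k(\alpha_1(a),\ldots,\alpha_n(a))$ in $A^{(n-1)}$. Applying the $k=2$ relation to $a+b$ and using that every $\alpha_i$ is $R$-linear, a direct expansion of $e_2$ gives
\[
s_2(a+b)=s_2(a)+s_2(b)+s_1(a)s_1(b)-\sum_{i=1}^n\alpha_i(a)\alpha_i(b).
\]
Separately, the universal identity
\[
s_2(a+b)=s_2(a)+s_2(b)+s_1(a)s_1(b)-s_1(ab)
\]
holds in $R$ for any rank-$n$ algebra: since $A$ is commutative, $M_a$ and $M_b$ commute, and a direct computation of $s_2$ on $\bigwedge^2 A$ shows that the cross contribution in $s_2(M_a+M_b)-s_2(M_a)-s_2(M_b)$ equals $s_1(M_a)s_1(M_b)-s_1(M_aM_b)=s_1(a)s_1(b)-s_1(ab)$. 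Equating the two expressions yields the bilinear trace formula $\sum_{i=1}^n\alpha_i(a)\alpha_i(b)=s_1(ab)$ in $A^{(n-1)}$; isolating $i=n$ and using $\alpha_i(ab)=\alpha_i(a)\alpha_i(b)$ for $i<n$ delivers $\alpha_n(a)\alpha_n(b)=s_1(ab)-\sum_{i<n}\alpha_i(ab)=\alpha_n(ab)$.

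Once $\alpha_n$ is an $R$-algebra map, its compatibility with base change propagates the factorization to $P_{a'}=\prod_{i=1}^n(X-(\alpha_i\otimes\mathrm{id})(a'))$ for every $R\to R'$ and $a'\in A\otimes_R R'$ (one first establishes the identity in $A^{(n-1)}\otimes_R\Sym(\dual{A})[X]$ for the generic element $\gamma$, then specializes via Proposition \ref{pro-gen-elem}). The universal property of $A^{(n)}$ then delivers $\psi$ with $\psi\circ\alpha'_i=\alpha_i$ for all $i\le n$. The compositions $\psi\varphi$ and $\varphi\psi$ coincide with $\mathrm{id}$ by the uniqueness clauses of the respective universal properties: both agree with $\mathrm{id}$ on $\alpha_i$ (respectively on $\alpha'_i$) for $i<n$ by construction, and for $i=n$ one uses the identity $\alpha'_n(a)=s_1(a)-\sum_{i<n}\alpha'_i(a)$ holding in $A^{(n)}$, which comes from the same coefficient-matching that produced $\alpha_n$.
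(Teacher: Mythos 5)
Your proof is correct and takes essentially the same approach as the paper: both define $\alpha_n(a)=s_1(a)-\sum_{i<n}\alpha_i(a)$ and prove multiplicativity by combining the de Smit identity $s_2(a+b)=s_2(a)+s_2(b)+s_1(a)s_1(b)-s_1(ab)$ with the Vieta expansion of $s_2(a+b)$ in $A^{(n-1)}$ to extract $\sum_i\alpha_i(a)\alpha_i(b)=s_1(ab)$. The only cosmetic difference is that you build mutually inverse maps $\varphi,\psi$ rather than directly checking, as the paper does, that the pair $(A^{(n-1)},\alpha_1,\ldots,\alpha_n)$ satisfies the universal property of $A^{(n)}$; the two packagings are equivalent.
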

\begin{proof}
Define a map $\alpha_n\colon A\to A^{(n-1)}$ sending $a$ to $s_1(a)- \sum_i
\alpha_i (a)$ for $i =1,\ldots, n-1$. We prove that $A^{(n-1)}$ with maps
$\alpha_i$ for $i = 1,\ldots, n$ has the universal property for $A^{(n)}$.
Clearly $\alpha_n$ is linear and for all $R$-algebras $R'$ and $a\in A' =
A\otimes_R R'$, the characteristic polynomial of $a$ is equal to $\prod_i (X -
\alpha_i(a))$. From this it follows that given any $R$-algebra $B$ with $n$ maps
$\beta_i\colon A\to B$ satisfying the required property, the map $\psi \colon
A^{(n-1)} \to B$ given by the universal property of $A^{(n-1)}$ satisfies
$\beta_n = \psi\circ \alpha_n$.

We need to show that $\alpha_n$ is multiplicative. We use the following formula 
from \cite{BartCharPoly}:
\begin{equation}
s_2(a + b) = s_2(a) + s_2(b) + s_1(a)s_1(b) - s_1(ab) \label{BartS2}
\end{equation}
Since $P_a$ is equal to $\prod_i (X-\alpha_i(a))$ we have that
\[
\begin{array}{lcr}
s_1(a) = \sum_i \alpha_i(a)& \textrm{ and } &
s_2(a) = \sum_{i<j} \alpha_i(a) \alpha_j(a)
\end{array}
\]
Then we can compute
\[
\begin{split}
s_2(a + b) &= \sum_{i<j} \alpha_i(a+b)\alpha_j(a+b) = \\
&= \sum_{i<j}\alpha_i(a)\alpha_j(a) + \sum_{i<j}\alpha_i(b)\alpha_j(b) +
\sum_{i\neq j}\alpha_i(a)\alpha_j(b)= \\
&= s_2(a) + s_2(b) + \sum_{i\neq j} \alpha_i(a)\alpha_j(b)
\end{split}
\]
and comparing with formula \eqref{BartS2}:
\[
\sum_i \alpha_i(ab) = s_1(ab) = s_1(a)s_1(b) - \sum_{i\neq j}
\alpha_i(a)\alpha_j(b) = \sum_i \alpha_i(a)\alpha_i(b).
\]
Since $\alpha_i$ is multiplicative for $i = 1,\ldots, n-1$ follows that also
$\alpha_n$ is multiplicative, as we wanted to show.
\end{proof}

\begin{Cor} \label{galois-closure-rank-two}
Let $A$ be an $R$-algebra of rank $2$. Then $A$ with the identity and the
natural involution $a \mapsto s_1(a) - a$, is a $2$-closure of $A$.
\end{Cor}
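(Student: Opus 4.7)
The plan is to deduce this directly from the two preceding results: Proposition \ref{pro-Sclo-special-cases} and Theorem \ref{theo-S-n-1}.

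First, I would specialize Theorem \ref{theo-S-n-1} to $n = 2$, obtaining an isomorphism $A^{(1)} \cong A^{(2)}$ compatible with the structure maps. Next, I would invoke part \emph{3} of Proposition \ref{pro-Sclo-special-cases}, which identifies $A^{(1)}$ with $A$ itself, equipped with the identity as its unique structure map $\alpha_1$. Composing these two identifications gives $A \cong A^{(2)}$ and exhibits $A$ as a $2$-closure of itself.

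It remains only to identify the two structure maps $A \to A^{(2)}$ under this identification. Tracing through the proof of Theorem \ref{theo-S-n-1}: the first map $\alpha_1$ is the identity on $A$ (inherited from $A^{(1)}$), and the second map is defined there as $\alpha_2(a) = s_1(a) - \alpha_1(a)$, which in this case becomes $a \mapsto s_1(a) - a$. This is precisely the natural involution in the statement.

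There is no real obstacle here; the entire content of the corollary has already been established. The only minor check is that $a \mapsto s_1(a) - a$ is indeed the map produced by the construction in Theorem \ref{theo-S-n-1}, which is immediate from the definition given there with $n - 1 = 1$ summand in the sum $\sum_i \alpha_i(a)$.
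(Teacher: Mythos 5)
Your proposal is correct and matches the paper's own proof, which likewise deduces the corollary directly from Theorem \ref{theo-S-n-1} combined with part \emph{3} of Proposition \ref{pro-Sclo-special-cases}. Your additional step of explicitly tracing the map $\alpha_n(a) = s_1(a) - \sum_i \alpha_i(a)$ from the proof of Theorem \ref{theo-S-n-1} down to $a \mapsto s_1(a) - a$ when $n=2$ is a useful clarification that the paper leaves implicit.
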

\begin{proof}
Follows from Theorem \ref{theo-S-n-1} and from number \emph{3} of Proposition
\ref{pro-Sclo-special-cases}.
\end{proof}

\begin{Rem}\label{rem-Sclo-action}
For any $R$-algebra $A$ and any finite set $S$, by the universal property of
$A^{(S)}$, there is a natural action of the symmetric group of $S$ on $A^{(S)}$,
exchanging the natural maps. This action will be discussed in detail in a
future paper.
\end{Rem}

%%%%%%%%%%%%%%%%%%%%%%%%%%%%%%%%%%%%%%%%%%%%%%%%%%%%%%%%%%%%%%%%%%%%%%%%%%%%%%%

\section{The product formula}\label{sec-product-formula}

The theorem we are going to prove is a generalization to $S$-closures of the
product formula that is proved in \cite{BhargSat}; it is a formula to compute
the $S$-closure of a product of algebras in terms of closures of the factors. We
state the theorem here.

\begin{InTheo}[Theorem \ref{theo-prod-formula}]
Let $m$ be a positive integer. For $i = 1, \ldots, m$ let $A_i$ be an
$R$-algebra of rank $n_i$. Let $A$ be $A_1\times \cdots \times A_m$, an
$R$-algebra of rank $n= \sum n_i$. Let $S$ be a finite set and let $\mathscr{F}$
be the set of all maps $S\to\{1,\ldots,m\}$. Fix $F\in \mathscr{F}$ and let $S_i
= F^{-1}(i)$. Write
\[
A^{(F)} = \bigotimes_{i =1}^{m} A_i^{(S_i)}
\]
and let $\alpha_{s,i}$ for $s\in S_i$ be the natural map $A_i \to A_i^{(S_i)}$.
Define an $R$-algebra
\[
C = \prod_{F\in \mathscr{F}} A^{(F)}
\]
and maps $\delta_s \colon A \to C$ for $s \in S$ by
\[
(\delta_s(a_1,\ldots, a_m))_F = 1\otimes \cdots\otimes \alpha_{s,i}(a_i)\otimes
\cdots\otimes 1, \textrm{ with } i = F(s).
\]
Then $(C, (\delta_s)_{s\in S})$ is the $S$-closure of $A/R$.
\end{InTheo}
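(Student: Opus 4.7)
The plan is to verify the two halves of the universal property separately: first the divisibility of the characteristic polynomial in $C\otimes_R R'[X]$, and then universality. The strategy for universality is to use the idempotent decomposition $A = A_1\times\cdots\times A_m$ to decompose any target $B$ as a product indexed by $\mathscr{F}$, and then invoke the universal properties of the smaller closures $A_i^{(S_i)}$ in each factor.

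For divisibility, let $a=(a_1,\ldots,a_m)\in A\otimes_R R'$. Since multiplication by $a$ is block diagonal, $P_a(X)=\prod_i P_{a_i}(X)$, and in the factor $A^{(F)}$ of $C$ the image $\delta_s(a)$ equals $\alpha_{s,F(s)}(a_{F(s)})$ tensored with $1$'s. Hence
\[
\Delta_a(X)\big|_{A^{(F)}\otimes_R R'[X]} \;=\; \prod_{i=1}^{m}\prod_{s\in S_i}\bigl(X-\alpha_{s,i}(a_i)\bigr),
\]
and since each inner factor divides $P_{a_i}$ in $A_i^{(S_i)}\otimes_R R'[X]$ by the defining property of the $S_i$-closure, $\Delta_a\mid P_a$ holds in each $A^{(F)}\otimes_R R'[X]$ and hence in $C\otimes_R R'[X]$.

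For universality, suppose $\beta_s\colon A\to B$ satisfy the hypothesis, and let $e_1,\ldots,e_m\in A$ be the orthogonal idempotents with $A_i = e_iA$. The $\beta_s(e_j)$ are commuting idempotents in $B$, orthogonal in $j$ for fixed $s$ and summing to $1$; for each $F\in\mathscr{F}$ set $u_F = \prod_{s\in S}\beta_s(e_{F(s)})$. Expanding $1 = \prod_s\sum_j\beta_s(e_j)$ shows the $u_F$ form a complete orthogonal system, so $B\cong\prod_F B_F$ with $B_F = u_F B$. Inside $B_F$ one has $\beta_s(e_{F(s)})=1$ and $\beta_s(e_j)=0$ for $j\ne F(s)$, so $\beta_s|_{B_F}$ factors as the projection $A\to A_{F(s)}$ followed by a ring map $\beta_{s,F}\colon A_{F(s)}\to B_F$.

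The main obstacle is to produce a compatible map $A^{(F)}\to B_F$. When every $|S_j|\le n_j$, given $a_i'\in A_i\otimes_R R'$ I plug the element $a = a_i' + \lambda\sum_{j\ne i}e_j\in A\otimes_R R'[\lambda]$ (with $\lambda$ a fresh indeterminate) into the hypothesis to obtain, in $B_F\otimes R'[\lambda][X]$,
\[
\prod_{s\in S_i}\bigl(X-\beta_{s,F}(a_i')\bigr)\cdot(X-\lambda)^{|S|-|S_i|}\;\mid\;P_{a_i'}(X)\cdot(X-\lambda)^{n-n_i}.
\]
Cancelling $(X-\lambda)^{|S|-|S_i|}$ (a non-zerodivisor, being monic) and reading off the coefficient of the top $\lambda$-power on both sides yields $\prod_{s\in S_i}(X-\beta_{s,F}(a_i'))\mid P_{a_i'}(X)$ in $B_F\otimes R'[X]$, so the universal properties of each $A_i^{(S_i)}$ and of the tensor product produce a ring map $A^{(F)}=\bigotimes_i A_i^{(S_i)}\to B_F$. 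If instead some $|S_j|>n_j$, then $A^{(F)}=0$, and testing the hypothesis on $e_j$ gives $(X-1)^{|S_j|}X^{|S|-|S_j|}\mid(X-1)^{n_j}X^{n-n_j}$ in $B_F[X]$; reducing modulo $(X-1)^{n_j+1}$ kills the left side but equates the right side to $(X-1)^{n_j}$ (using that $X$ is a unit in $B_F[X]/(X-1)^{n_j+1}$), forcing $1=0$ in $B_F$, so $B_F=0$ and the zero map supplies the required $A^{(F)}\to B_F$. Gluing over $F$ produces $\varphi\colon C\to B$, and uniqueness follows from the observations $u_F=\prod_s\delta_s(e_{F(s)})$ and $\alpha_{s,i}(a)=u_F\cdot\delta_s(a)$ (for $s\in S_i$ with $F(s)=i$), which together show $C$ is generated as an $R$-algebra by $\delta_S(A)$.
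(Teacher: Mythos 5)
Your proof is correct, and it takes a genuinely different --- and arguably cleaner --- route from the paper's. The paper proves universality in three stages: first for \emph{connected} $B$ (using Lemma \ref{lem-connected-rings} to make each $\beta_s$ factor through a single $A_{F(s)}$); then, when $R$ is finitely generated over $\mathbb{Z}$, by invoking the prior existence of $A^{(S)}$ from Proposition \ref{pro-gal-constr} together with the fact that the Noetherian ring $A^{(S)}$ is a finite product of connected rings; and finally for general $R$ by descending to a finitely generated subring and base-changing. You bypass the whole reduction: rather than assume $B$ is connected, you manufacture the orthogonal idempotents $u_F = \prod_{s\in S}\beta_s(e_{F(s)})$ directly from the data, which decompose $B$ as $\prod_F u_F B$ over an arbitrary base ring with no Noetherianness or finiteness hypothesis, and on each factor the $\beta_s$'s automatically factor through their respective $A_{F(s)}$. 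Your specialization $a = a_i' + \lambda\sum_{j\ne i}e_j$ followed by cancellation of the monic factor $(X-\lambda)^{|S|-|S_i|}$ and extraction of the top $\lambda$-coefficient plays precisely the role of the paper's Lemma \ref{lem-charpoly-prod} (whose proof specializes the other components to $0$ and to $1$ and then invokes the coprimality of $X^t$ and $(X-1)^t$ from Lemma \ref{lem-coprime}); the degenerate case $|S_j|>n_j$, which the paper handles implicitly inside Lemma \ref{lem-charpoly-prod}, you treat separately by showing $B_F=0$ modulo $(X-1)^{n_j+1}$. What each approach buys: the paper's route reuses the already-established machinery around Proposition \ref{pro-gal-constr}; yours is self-contained, avoids the Noetherian structure theorem and the direct-limit bootstrap entirely, and verifies the universal property for $C$ at one stroke over any commutative $R$.
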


We will give the proof of the theorem after proving some lemmas.

\begin{Lem}\label{lem-coprime}
Let $R$ be a ring and let $t$ be a positive integer. Then there exists
polynomials $u(X), v(X) \in R[X]$ such that
\[
1 = u(X)X^t + v(X)(X-1)^t.
\]
\end{Lem}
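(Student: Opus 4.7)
The plan is to reduce to the universal case over $\mathbb{Z}$ and then exhibit an explicit Bezout identity using the binomial theorem applied to the trivial decomposition $1 = X + (1-X)$.

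First I would observe that it is enough to find such $u(X), v(X)$ in $\mathbb{Z}[X]$: the ring map $\mathbb{Z}[X] \to R[X]$ will then give the identity in $R[X]$ for any $R$. So the problem is reduced to a statement about a single universal ring.

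Next, I would expand $(X + (1-X))^{2t-1} = 1$ by the binomial theorem:
\[
1 = \sum_{k=0}^{2t-1} \binom{2t-1}{k} X^{k}(1-X)^{2t-1-k}.
\]
In each term either $k \geq t$ or $2t-1-k \geq t$ (since these two conditions cannot both fail when the indices sum to $2t-1$). Splitting the sum accordingly, every term in the first group is divisible by $X^t$ and every term in the second group is divisible by $(1-X)^t = (-1)^t(X-1)^t$. Collecting cofactors gives polynomials $u(X), v(X) \in \mathbb{Z}[X]$ with $1 = u(X)X^t + v(X)(X-1)^t$, which is what is required.

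There is no serious obstacle here; the only potentially subtle point is to ensure the argument works over $\mathbb{Z}$ (and hence over any ring) rather than only over a field where one would normally invoke the Euclidean algorithm. The binomial identity above sidesteps denominators entirely and provides $u$ and $v$ with integer coefficients directly.
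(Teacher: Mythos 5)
Your proof is correct, but it takes a genuinely different route from the paper. The paper argues abstractly: it considers the quotient $S = R[X]/(X^t, (X-1)^t)$ and observes that in $S$ the image of $X$ is nilpotent, hence $X-1$ is a unit (unit plus nilpotent), yet $X-1$ is also nilpotent; a ring in which some element is both a unit and nilpotent is the zero ring, so $1$ lies in the ideal. Your argument instead first reduces to the universal case $R=\mathbb{Z}$ and then produces an explicit Bezout identity by expanding $1=(X+(1-X))^{2t-1}$ via the binomial theorem and noting that each summand is divisible by $X^t$ or $(1-X)^t$ because the two exponents $k$ and $2t-1-k$ cannot both be below $t$. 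The paper's approach is shorter and purely existential; yours is constructive, yielding concrete cofactors $u,v$ with integer coefficients (and in fact shows the identity lives in $\mathbb{Z}[X]$, which is slightly more information than the lemma asks for). Both are valid; the reduction to $\mathbb{Z}$ is not strictly needed since the binomial expansion would go through directly over any $R$, but it is a clean way to present the idea.
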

\begin{proof}
Let $I$ be the ideal generated by $X^t$ and $(X-1)^t$. We show that the quotient
ring $S = R[X] / I$ is trivial, so that $1\in I$. The image of $X$ in $S$ is
nilpotent since $X^t = 0$, so $X-1$ is a unit in $S$. But $X-1$ is also
nilpotent, hence $S$ is trivial, as we wanted to show.
\end{proof}

Let $P$ and $Q$ be in $R[X]$. We will write $P \mid Q$ for ``$P$ divides $Q$''.

\begin{Lem}\label{lem-charpoly-prod}
Let $A_i$ be $R$-algebras with $A_i$ of rank $n_i$, for $i = 1,\ldots, m$. Let
$S_1,\ldots, S_m$ be finite sets and let $B$ be an $R$-algebra with maps
$j_{s,i}\colon A_i \to B$ for $i = 1,\ldots, m$ and $s\in S_i$. Then the
following are equivalent:
\begin{enumerate}
\item For all $a = (a_1,\ldots, a_m)\in A$ we have
\[
\prod_{i=1}^{m}\left(\prod_{s\in S_i}(X - j_{s,i}(a_i))\right) \mid P_{a}(X)
\]
in $B[X]$, where $P_a$ is the characteristic polynomial of $a$.
\item For all $i\in \{1,\ldots, m\}$ and for all $a_i \in A_i$ we have
\[
\prod_{s\in S_i}(X - j_{s,i}(a_i))\mid P_{a_i}(X)
\]
in $B[X]$, where $P_{a_i}$ is the characteristic polynomial of $a_i$ in $A_i$.
\end{enumerate}
\end{Lem}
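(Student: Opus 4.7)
The plan is to verify each direction of the equivalence separately. The implication \emph{2} $\Rightarrow$ \emph{1} should be essentially immediate: for $a = (a_1,\ldots,a_m)\in A$, multiplication by $a$ respects the product decomposition $A = \prod_i A_i$, so the characteristic polynomial factors as $P_a(X) = \prod_i P_{a_i}(X)$, and multiplying the divisibility relations assumed in \emph{2} across $i$ yields \emph{1}.

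For \emph{1} $\Rightarrow$ \emph{2}, I would fix an index $i$ and an element $a_i\in A_i$, and apply \emph{1} to two carefully chosen elements of $A$. Write $Q_i(X) = \prod_{s\in S_i}\bigl(X - j_{s,i}(a_i)\bigr)$, $k = \sum_{j\neq i}|S_j|$ and $k' = \sum_{j\neq i} n_j$. For each $c\in\{0,1\}$, consider
\[
a^{(c)} = (c\cdot 1_{A_1},\,\ldots,\, a_i,\,\ldots,\, c\cdot 1_{A_m})\in A,
\]
with $a_i$ in the $i$-th slot. Since each $j_{s,j}$ is a unital $R$-algebra map and the characteristic polynomial of $c\cdot 1_{A_j}$ on $A_j$ equals $(X-c)^{n_j}$, applying \emph{1} to $a^{(c)}$ gives the two relations
\[
(X-c)^k\, Q_i(X)\;\Big|\; (X-c)^{k'}\, P_{a_i}(X) \quad\text{in } B[X],\qquad c\in\{0,1\}.
\]
Because $X$ and $X-1$ are monic, and hence non-zero-divisors in $B[X]$, we may cancel the common factor $(X-c)^{\min(k,k')}$ from both sides of each relation.

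If $k\geq k'$, the cancellation already yields $Q_i\mid P_{a_i}$ and we are done. If $k'>k$, setting $t = k'-k$, we instead obtain $Q_i \mid X^t P_{a_i}$ and $Q_i \mid (X-1)^t P_{a_i}$. At this point I would invoke Lemma \ref{lem-coprime} to produce $u,v\in R[X]$ with $1 = uX^t + v(X-1)^t$; multiplying by $P_{a_i}$ then writes $P_{a_i}$ as a $B[X]$-linear combination of two multiples of $Q_i$, giving the desired $Q_i \mid P_{a_i}$. The main point to watch is the cancellation step, which relies on monic polynomials being regular in $B[X]$ for arbitrary $B$; the coprimality argument is what forces us to probe \emph{1} at both $c = 0$ and $c = 1$ rather than a single auxiliary element. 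Everything else is bookkeeping with the exponents $k$, $k'$, $n_j$ and $|S_j|$.
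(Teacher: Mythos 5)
Your proof is correct and follows essentially the same route as the paper: it specializes condition \emph{1} at $a_j = 0$ and $a_j = 1$ for $j\neq i$, cancels the monic common factor, and then uses Lemma \ref{lem-coprime} to combine the two resulting divisibilities. If anything, your explicit handling of the case $k\geq k'$ and your explicit appeal to regularity of monic polynomials in $B[X]$ are slightly cleaner than the paper's terse treatment of the $t<0$ case.
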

\begin{proof}
Note that $P_a$ is equal to $\prod_i P_{a_i}(X)$, so clearly the second
condition implies the first. Assume the first condition holds and fix $i \in
\{1,\ldots, m\}$. Setting $a_j = 0$ for $j\neq i$ we have:
\[
\prod_{s\in S_i}(X-j_{s,i}(a_i)) X^{N_1} \mid P_{a_i}(X)X^{N_2}
\]
with $N_1 = \sum_{i\neq j} \# S_j$ and $N_2 = \sum_{i\neq j} n_j$. Setting
$a_j = 1$ for $j \neq i$ we have
\[
\prod_{s\in S_i}(X-j_{s,i}(a_i)) (X - 1)^{N_1} \mid P_{a_i}(X)(X-1)^{N_2}.
\]
Put $t = N_2 - N_1$, and note that if $t<0$ then
\[
P_{a_i}(X) = X^{-t} \prod_{s\in S_i}(X - j_{s,i}(a_i))
\]
and so the second condition holds. Assume $t\geq 0$, there exist $f(X)$ and
$g(X)$ in $B'[X]$ such that:
\begin{equation}
P_{a_i}(X)X^{t} = \prod_{s\in S_i}(X-j_{s,i}(a_i)) f(X)
\label{lem-charpoly-1}
\end{equation}
\begin{equation}
P_{a_i}(X)(X - 1)^{t} = \prod_{s\in S_i}(X-j_{s,i}(a_i)) g(X)
\label{lem-charpoly-2}
\end{equation}
By Lemma \ref{lem-coprime} there exist $u(X)$ and $v(X)$ in $B'[X]$ such that
$u(X) X^t + v(X) (X-1)^t = 1$. Multiplying \eqref{lem-charpoly-1} by $u(X)$ and
\eqref{lem-charpoly-2} by $v(X)$ and adding the results we get
\[
P_{a_i}(X) = (u(X) f(X) + v(X) g(X))\prod_{s\in S_i}(X - j_{s,i}(a_i))
\]
so the second condition holds and the proof is complete.
\end{proof}

\begin{Lem}\label{lem-connected-rings}
Let $A=\prod_{i=1}^m A_i$ be a finite product of rings. If $B$ is a connected
ring then given a morphism $f\colon A\to B$ there exists a unique index $i$ and
a unique morphism $g\colon A_i \to B$ such that $f = g \circ \pi_i$, where
$\pi_i\colon A\to A_i$ is the natural projection.
\[
\xymatrix{
A \ar[r]^f  \ar[d]_{\pi_i} & B \\
A_i \ar@{..>}[ur]_g
}
\]
\end{Lem}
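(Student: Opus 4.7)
The natural strategy is to exploit the complete system of orthogonal idempotents arising from the product decomposition. Let $e_i\in A$ be the element with $1$ in the $i$-th coordinate and $0$ elsewhere, so that $e_i^2 = e_i$, $e_ie_j = 0$ for $i\neq j$, and $\sum_{i=1}^m e_i = 1_A$. Applying the ring homomorphism $f$ we obtain elements $f(e_i)\in B$ satisfying the same identities: they are pairwise orthogonal idempotents summing to $1_B$.

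Since $B$ is connected, the only idempotents of $B$ are $0$ and $1$. The orthogonality together with $\sum_i f(e_i)=1_B$ then forces exactly one of the $f(e_i)$ to equal $1$ and all the others to equal $0$; let $i$ be this unique index. This step is the heart of the proof, and is the only place where the hypothesis that $B$ is connected is used.

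Next I would construct $g$ explicitly. Let $\iota_i\colon A_i \to A$ send $a_i$ to the tuple with $a_i$ in the $i$-th slot and zeros elsewhere; this is a (non-unital) ring map sending $1_{A_i}$ to $e_i$. Define $g = f\circ \iota_i \colon A_i \to B$. Additivity and multiplicativity of $g$ are immediate from those of $f$ and $\iota_i$, and $g(1_{A_i}) = f(e_i) = 1_B$, so $g$ is a ring homomorphism. To check $f = g\circ\pi_i$, take $a = (a_1,\dots,a_m)\in A$ and compute $f(a) - g(\pi_i(a)) = f(a - \iota_i(a_i)) = \sum_{k\neq i} f(\iota_k(a_k))$; each summand equals $f(\iota_k(a_k))\cdot f(e_k) = 0$ because $\iota_k(a_k) = \iota_k(a_k)e_k$ and $f(e_k) = 0$ for $k\neq i$.

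Finally, for uniqueness, suppose $f = g'\circ \pi_j$ for some index $j$ and homomorphism $g'\colon A_j\to B$. Then $f(e_j) = g'(\pi_j(e_j)) = g'(1_{A_j}) = 1_B$, so by the uniqueness of the index with $f(e_i)=1$ we obtain $j = i$. Moreover, for any $a_i\in A_i$ we have $g'(a_i) = g'(\pi_i(\iota_i(a_i))) = f(\iota_i(a_i)) = g(a_i)$, so $g' = g$. No real obstacle is expected; the only subtle point is remembering that ring homomorphisms preserve $1$, which is what makes the idempotents sum to $1_B$ and pins down the index $i$.
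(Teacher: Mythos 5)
Your proof is correct and follows essentially the same route as the paper: both identify the complete system of orthogonal idempotents $e_i$ (the paper writes $1_i$), note $f$ sends them to orthogonal idempotents in $B$ summing to $1_B$, and use connectedness of $B$ to force exactly one $f(e_i)=1$. Your write-up is slightly more explicit in constructing $g$ via $\iota_i$ and verifying uniqueness, but there is no substantive difference.
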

\begin{proof}
Since $B$ is connected the only idempotents are $0$ and $1$. Let $1_i$ be the
element $(0,\ldots, 1, \ldots, 0)$ of $A$, where $1$ is in the $i-th$ position.
Since $1_i$ is idempotent $f(1_i)$ is idempotent in $B$. If $f(1_i)= 0$ for all
$i$, then $f$ would send $1$ to $0$, so there exist at least one $i_0$ such that
$f(1_{i_0}) = 1$. For $j\neq i_0$, we have
\[
0 = f(0) = f(1_{i_0} 1_j) = f(1_{i_0}) f(1_j) = f(1_j)
\]
so $1_j$ is zero if $j\neq i_0$ and hence $i_0$ is unique. Then $f$ factors
through a unique $A_{i_0}$, as we wanted to show.
\end{proof}

We now prove the product formula.

\begin{Theo}\label{theo-prod-formula}
Let $m$ be a positive integer. For $i = 1, \ldots, m$ let $A_i$ be an
$R$-algebra of rank $n_i$. Let $A$ be $A_1\times \cdots \times A_m$, an
$R$-algebra of rank $n= \sum n_i$. Let $S$ be a finite set and let $\mathscr{F}$
be the set of all maps $S\to\{1,\ldots,m\}$. Fix $F\in \mathscr{F}$ and let $S_i
= F^{-1}(i)$. Write
\[
A^{(F)} = \bigotimes_{i =1}^{m} A_i^{(S_i)}
\]
and let $\alpha_{s,i}$ for $s\in S_i$ be the natural map $A_i \to A_i^{(S_i)}$.
Define an $R$-algebra
\[
C = \prod_{F\in \mathscr{F}} A^{(F)}
\]
and maps $\delta_s \colon A \to C$ for $s \in S$ by
\[
(\delta_s(a_1,\ldots, a_m))_F = 1\otimes \cdots\otimes \alpha_{s,i}(a_i)\otimes
\cdots\otimes 1, \textrm{ with } i = F(s).
\]
Then $(C, (\delta_s)_{s\in S})$ is the $S$-closure of $A/R$.
\end{Theo}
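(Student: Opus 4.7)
The plan is to verify both halves of Definition~\ref{universal-property} for $(C, (\delta_s)_{s \in S})$. The divisibility $\Delta_a \mid P_a$ factors through the projections $C \to A^{(F)}$ and reduces almost immediately to the defining property of each $A_i^{(S_i)}$; the substance of the theorem is the universal property, where I will first decompose a target $B$ into blocks indexed by $\mathscr{F}$ and then apply the preparatory Lemma~\ref{lem-charpoly-prod}.

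For the divisibility, fix $F \in \mathscr{F}$, a base change $R \to R'$, and $a = (a_1, \ldots, a_m) \in A \otimes_R R'$. From the definition of $\delta_s$, grouping the factors of $\Delta_a$ by the value of $F$ gives
\[
\Delta_a|_F \;=\; \prod_{i=1}^{m} \prod_{s \in S_i} \bigl( X - (\alpha_{s,i} \otimes \mathrm{Id})(a_i) \bigr)
\]
in $A^{(F)} \otimes_R R'[X]$. Each inner product divides $P_{a_i}$ by the defining property of $A_i^{(S_i)}$, and multiplying over $i$ yields $\Delta_a|_F \mid \prod_i P_{a_i} = P_a$ in every component, hence $\Delta_a \mid P_a$ in $C \otimes_R R'[X]$.

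For universality, let $B$ carry maps $\beta_s \colon A \to B$ satisfying the same hypothesis. Write $1_i = (0, \ldots, 1, \ldots, 0) \in A$ (with $1$ in position $i$) and, for $F \in \mathscr{F}$, set $e_F = \prod_{s \in S} \beta_s(1_{F(s)}) \in B$. The identities $\beta_s(1_i) \beta_s(1_j) = 0$ for $i \neq j$ and $\sum_i \beta_s(1_i) = 1$, valid for each fixed $s$, make $(e_F)_{F \in \mathscr{F}}$ a complete orthogonal system of idempotents, giving $B = \prod_F B_F$ with $B_F = e_F B$. Inside $B_F$ one has $\beta_s(1_{F(s)}) = 1$ and $\beta_s(1_j) = 0$ for $j \neq F(s)$, so writing $\tilde a_i = (0, \ldots, a_i, \ldots, 0) \in A$, the assignment
\[
\beta_{s,i} \colon A_i \longrightarrow B_F, \qquad a_i \longmapsto \beta_s(\tilde a_i)|_{B_F} \qquad (s \in S_i),
\]
is a well-defined ring homomorphism, and $\beta_s(a)|_{B_F} = \beta_{s, F(s)}(a_{F(s)})$ for every $a \in A$ (and similarly after any base change).

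Projecting the divisibility hypothesis into $B_F \otimes_R R'[X]$ therefore reads
\[
\prod_{i=1}^{m} \prod_{s \in S_i} \bigl( X - \beta_{s,i}(a_i) \bigr) \;\Big|\; \prod_{i=1}^{m} P_{a_i}(X),
\]
and Lemma~\ref{lem-charpoly-prod}, applied over $R'$ to the algebras $A_i \otimes_R R'$ and the ring $B_F \otimes_R R'$, extracts the factorwise divisibility $\prod_{s \in S_i}(X - \beta_{s,i}(a_i)) \mid P_{a_i}(X)$ for each $i$. The universal property of $A_i^{(S_i)}$ then produces a unique $R$-algebra map $A_i^{(S_i)} \to B_F$; the universal property of the tensor product combines these into $A^{(F)} \to B_F$; and assembling over $F$ yields the required map $C \to B$. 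Compatibility with the $\delta_s$ and $\beta_s$ is checked $F$-component by $F$-component, since both maps send the $F$-component of $\delta_s(\tilde a_i)$ (which is $1 \otimes \cdots \otimes \alpha_{s,i}(a_i) \otimes \cdots \otimes 1$ when $F(s) = i$, and $0$ otherwise) to $\beta_{s,i}(a_i)$ in $B_F$, and uniqueness at every step follows because $A_i^{(S_i)}$ is generated by $\alpha_{s,i}(A_i)$ (Remark~\ref{rem-cons-univ-prop}). The main obstacle is precisely this transfer from the aggregated divisibility in $B$ to the factorwise divisibilities in the blocks $B_F$; it is what makes Lemma~\ref{lem-charpoly-prod}, and behind it Lemma~\ref{lem-coprime}, indispensable.
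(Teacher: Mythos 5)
Your proof is correct, and for the universal property it takes a genuinely cleaner route than the paper's. The paper first handles connected targets $B$ via Lemma~\ref{lem-connected-rings} (each $\beta_s$ factors through a unique $A_i$, defining $F$), and then needs two further stages to reach the general case: assuming $R$ finitely generated over $\mathbb{Z}$ so that $A^{(S)}$ is noetherian and hence a finite product of connected rings, to which the connected case applies factorwise to build a map $C\to A^{(S)}$; and finally descending the full statement from a finitely generated $\mathbb{Z}$-subalgebra $R_0\subseteq R$. You avoid both stages by decomposing an \emph{arbitrary} target $B$ directly: the complete orthogonal system of idempotents $e_F=\prod_{s\in S}\beta_s(1_{F(s)})$ refines $B$ into blocks $B_F$, inside each of which every $\beta_s$ factors through $A_{F(s)}$ --- which is precisely the conclusion of Lemma~\ref{lem-connected-rings} but achieved without any connectedness or noetherian hypothesis. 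The divisibility half and the pivotal Lemma~\ref{lem-charpoly-prod} are used identically in both arguments; what your version buys is the removal of the noetherian decomposition and the $\mathbb{Z}$-descent. For uniqueness it would be worth recording explicitly that $1_F=\prod_{s\in S}\delta_s(1_{F(s)})$ in $C$, so any $\psi\colon C\to B$ with $\psi\circ\delta_s=\beta_s$ automatically sends $1_F$ to $e_F$ and therefore respects the block decomposition; this makes the reduction to the uniqueness of the $f_i$ airtight.
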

\begin{proof}
We give first a summary of the proof: we show that for every $R\to R'$ and every
$a\in A'= A\otimes_R R'$ the characteristic polynomial $P_a(X)$ is a multiple of
\[
\Delta_a(X) = \prod_{s\in S} (X - \delta_s(a))
\]
in $C\otimes_R R'[X]$; then we show that for every \emph{connected} $R$-algebra
$B$ given with a map $\beta_s\colon A\to B$ for each $s\in S$, such that for
every $R\to R'$ and every $a\in A'$ the characteristic polynomial $P_a(X)$ is a
multiple of $\prod (X - \beta_s(a))$ in $B\otimes_R R'[X]$ there is a unique map
$C\to B$ commuting with all the natural maps; from this we deduce the theorem
for $R$ a finitely generated $\mathbb{Z}$-algebra; then we prove the theorem in
general.

That $P_a$ is a multiple of $\Delta_a$ in $C\otimes_R R'[X]$ for every $R\to R'$
and for every $a\in A'$ follows from the easy implication in Lemma
\ref{lem-charpoly-prod} and the defining property of $A^{(S_i)}$.

Suppose $B$ is a \emph{connected} $R$-algebra with maps $\beta_s\colon A\to B$
for $s\in S$ and such that for all $R\to R'$ and for all $a\in A'$ we have
\[
\prod_{s\in S}(X - \beta_s(a)) \mid P_{a}(X)
\]
in $B\otimes_R R'[x]$. We show that there exists a unique map $\varphi\colon
C\to B$ such that for all $s\in S$ we have $\beta_s = \varphi \circ \delta_s$.
Since $B$ is connected, by Lemma \ref{lem-connected-rings} we can define a map
$F\colon S \to \{1,\ldots, m\}$ by setting $F(s) = i$ if and only if $\beta_s$
factors through $A_i$, i.e. there exists a map $\beta_{s,i}$ such that $\beta_s
= \beta_{s,i} \circ \pi_i$, where $\pi_i$ is the projection $A\to A_i$. For all
$a\in A'$ we have that
\[
\prod_{s\in S}(X - \beta_s(a)) = \prod_{i=1,\ldots, m}\left( \prod_{s\in
F^{-1}(i)}\left( X - \beta_{s,i}(a_i) \right)\right)
\]
holds in $B'[x]$. By Lemma \ref{lem-charpoly-prod} for all $i \in \{1,\ldots,
m\}$ and for all $a_i\in A_i'$ we have then
\[
\prod_{s\in F^{-1}(i)}(X - \beta_{s,i}(a_i)) \mid P_{a_i}(X)
\]
so the universal property of $A_i^{(S_i)}$ gives a unique map $f_i\colon
A_i^{(S_i)} \to B$ such that for all $s\in S_i$ we have $\beta_{s,i} = f_i\circ
\alpha_{s,i}$. Tensoring these maps, we get a unique map $f\colon A^{(F)}\to B$
such that for all $i$ we have $f_i = f\circ \varepsilon_i$ where $\varepsilon_i$
is the natural map $A_i^{(S_i)} \to A^{(F)}$. Composing with the projection
$\pi_F\colon C \to A^{(F)}$ we obtain a map $\varphi\colon C\to B$. The
following commutative diagram summarizes the situation for $s\in S_i$ fixed.
\[
\xymatrix{
A_i^{(S_i)} \ar[r]^-{\varepsilon_i} \ar@{..>}[dr]^-{f_i}&
A^{(F)} \ar@{..>}[d]^-{f}&
C \ar[l]_-{\pi_F} \ar@{..>}[dl]^-{\varphi}\\
A_i \ar[u]^-{\alpha_{s,i}} \ar[r]^-{\beta_{s,i}}&
B
}
\]
We have: $\pi_F\circ\delta_s = \varepsilon_{i}\circ \alpha_{s,{i}}\circ\pi_{i}$.
So for every $s\in S$ we have that $\beta_s$ is 
\[
\beta_{s,i}\circ\pi_i=f\circ\varepsilon_i \circ \alpha_{s,i}\circ\pi_i
=f\circ \pi_F\circ\delta_s =\varphi \circ\delta_s
\]
so $\varphi$ satisfies the required condition. Uniqueness follows because the
images of the $\delta_s$ generate $C$.

Suppose now $R$ is finitely generated over $\mathbb{Z}$. From the construction
of $A^{(S)}$ in Proposition \ref{pro-gal-constr} we see that $A^{(S)}$ is
finitely generated over $R$ and hence also over $\mathbb{Z}$, so it is
noetherian. Every noetherian ring is a finite product of connected rings by
\cite[Chapter $2$, Exercise $2.13$ (c)]{Hartshorne} and \cite[Chapter $1$,
Proposition $1.5$]{Hartshorne}. Write $A^{(S)}$ as $\prod_{j\in J}R_j$, with $J$ 
a finite set and $R_j$ connected. By applying the projection map $\pi_j\colon 
A^{(S)}\to R_j$ we get that
\[
\prod_{s\in S}(X - \pi_j\circ\delta_s(a)) \mid P_{a}(X)
\]
holds in every $R_j'[X]$, and since the $R_j$ are connected the previous
argument gives a map $\varphi_j\colon C\to B_j$ such that for all $s\in S$ we
have $\pi_j\circ \alpha_s = \varphi_j \circ\delta_s$. By the universal property
of the product we get a map $\varphi\colon C\to A^{(S)}$ such that for all $j\in
J$ we have $\varphi_j = \pi_j\circ\varphi$. The following diagram shows the
situation for $j$ and $s$ fixed.
\[
\xymatrix{
A \ar[d]_-{\alpha_s} \ar[r]^-{\delta_s}&
A^{(S)} \ar[d]^-{\pi_j} \\
C \ar[r]_-{\varphi_j} \ar@{..>}[ur]^-{\varphi}&
R_j
}
\]
For all $s\in S$ and for all $j\in J$ we have $\pi_j\circ\delta_s = \pi_j\circ
\varphi\circ\delta_s$, and hence the universal property of the product gives
$\varphi \circ \delta_s = \delta_s$. So there exists a map $C\to A^{(S)}$
commuting with the natural maps and this is sufficient to conclude for $R$
finitely generated over $\mathbb{Z}$.

Back to the general case: we no longer assume $R$ to be finitely generated over
$\mathbb{Z}$. The $R$-algebra $A$ is finitely presented over $R$, so there
exists a subring $R_0$ of $R$, finitely generated over $\mathbb{Z}$, and a
finite $R_0$-algebra $A_0$ such that $A\cong R\otimes_{R_0} A_0$. Define the
$R_0$-algebra $C_0$ in the obvious way and note that $C\cong R\otimes_{R_0}
C_0$. We proved $A_0^{(S)}$ and $C_0$ are isomorphic and the constructions of
$A^{(S)}$ and of $C$ both commute with base change, so $A^{(S)}$ and $C$ are
isomorphic and the proof is complete.
\end{proof}

The product formula in \cite{BhargSat} is now a corollary of Theorem
\ref{theo-prod-formula}.

\begin{Cor}\label{cor-formula-nclo}
For $i = 1, \ldots, m$ let $A_i$ be an $R$-algebra of rank $n_i$. Let $A$ be
the product of the $A_i$, an $R$-algebra of rank $n= \sum n_i$. Then
the Galois closure of $A$ satisfies
\[
A^{(n)}\cong \left(\bigotimes_{i=1}^m A_i^{(n_i)}\right)^{\frac{n!}{n_1!\,\cdots
\,n_m!}}
\]
\end{Cor}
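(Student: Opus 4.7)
The plan is to derive this as a direct combinatorial specialization of Theorem \ref{theo-prod-formula} applied to the set $S = \{1,\ldots,n\}$. With this choice, Theorem \ref{theo-prod-formula} gives
\[
A^{(n)} \cong \prod_{F\in \mathscr{F}} A^{(F)}, \qquad A^{(F)} = \bigotimes_{i=1}^m A_i^{(F^{-1}(i))},
\]
where $\mathscr{F}$ is the set of maps $S\to \{1,\ldots,m\}$. So the whole corollary reduces to sorting the factors of this product by the multiplicities of the fibers of $F$.

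The first step is to discard the trivial factors. By Proposition \ref{pro-Sclo-special-cases}(1), whenever $|F^{-1}(i)| > n_i$ for some $i$, the algebra $A_i^{(F^{-1}(i))}$ is zero, hence so is $A^{(F)}$. Since the zero ring is absorbed by finite products of rings (it is the terminal object in the category of commutative rings with $1$), such factors can be dropped. The constraint $|F^{-1}(i)|\leq n_i$ for every $i$ combined with $\sum_i |F^{-1}(i)| = n = \sum_i n_i$ forces $|F^{-1}(i)| = n_i$ for each $i$. So only the $F\in\mathscr{F}$ with $|F^{-1}(i)|=n_i$ for all $i$ contribute.

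The second step is to identify each non-trivial $A^{(F)}$ with the same algebra $\bigotimes_i A_i^{(n_i)}$. For any finite set $S'$ and any bijection $\{1,\ldots,\#S'\}\to S'$, the universal property of Definition \ref{universal-property} gives a canonical isomorphism $A_i^{(\#S')}\to A_i^{(S')}$ (the $S$-closure analogue of Remark \ref{rem-tensor-power}). Applied to $S' = F^{-1}(i)$ for each $i$, this yields $A^{(F)} \cong \bigotimes_i A_i^{(n_i)}$ independently of $F$.

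The final step is purely combinatorial: the number of maps $F\colon\{1,\ldots,n\}\to\{1,\ldots,m\}$ with prescribed fiber sizes $n_1,\ldots,n_m$ is the multinomial coefficient $\binom{n}{n_1,\ldots,n_m} = n!/(n_1!\cdots n_m!)$. Combining this count with the previous two steps yields the stated isomorphism. There is no real obstacle here; the only point requiring mild care is the absorption of zero factors in the product of rings, which is why the formula drops from $m^n$ factors to a multinomial coefficient.
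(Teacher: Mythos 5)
Your proof is correct and follows the same route as the paper's: apply Theorem \ref{theo-prod-formula} with $S=\{1,\ldots,n\}$, observe via Proposition \ref{pro-Sclo-special-cases}(1) that only maps $F$ with $\#F^{-1}(i)=n_i$ give non-zero factors, note that each such factor is isomorphic to $\bigotimes_i A_i^{(n_i)}$, and count the multinomial number of such $F$. The paper's proof is terser but makes the same observations, additionally citing Proposition \ref{pro-Bharg-Sat} to identify $A^{(n)}$ with the Bhargava--Satriano Galois closure $G(A/R)$, which you could mention for completeness given the phrasing of the statement.
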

\begin{proof}
By Theorem \ref{theo-prod-formula} we can write $A^{(n)}$ as a product indexed
over all maps $F\colon \{1,\ldots, n\} \to \{1,\ldots, m\}$. By Proposition
\ref{pro-Sclo-special-cases} a factor is not zero if and only if for all $i$ we
have $\#F^{-1}(i) = n_i$, so there are $\frac{n}{n_1!\,\cdots \,n_m!}$ non-zero
factors and they are all isomorphic to ${\bigotimes}_{i} A_i^{(n_i)}$. The
statement then follows from Proposition \ref{pro-Bharg-Sat}.
\end{proof}

Using the product formula we can compute the $S$-closure of an \'etale 
$R$-algebra of rank $n$, generalizing theorem $4$ in \cite{BhargSat}. We first 
present some basic facts on finite \'etale algebras. For what we need, the 
presentation in \cite{GaloisSchemes} is more than sufficient.

\begin{Def}\label{def-finite-etale}
Let $R$ be a ring and let $A$ be an $R$-algebra of rank $n$. Recall that 
$s_1\colon A\to R$ denotes the trace map. We say $A$ is \emph{finite \'etale} 
over $R$ if the map $A\to \dual{A}$ given by
\[
a \mapsto ( b \mapsto s_1(ab) )
\]
is an isomorphism.
\end{Def}

\begin{Pro}\label{etale-equivalent-def}
Let $R$ be a connected ring and let $A$ be an $R$-algebra. Then $A$ is finite
\'etale if and only if there exists a finite projective $R$-algebra $R'$, with
$R\to R'$ injective, such that $A\otimes_R R'$ is isomorphic to $(R')^n$ as an
$R$-algebra for some $n\geq 0$.
\end{Pro}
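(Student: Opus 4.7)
The plan is to treat the two implications separately, with the easier direction first.

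For $(\Leftarrow)$, the key observation is that $R \to R'$ is faithfully flat. Since $R$ is connected and $R'$ is finite projective over $R$, the rank of $R'$ is a locally constant, hence constant, function on $\mathrm{Spec}\,R$; injectivity of $R\to R'$ together with $R\neq 0$ rules out rank $0$, so this rank is at least $1$ and $R'$ is faithfully flat over $R$. From $A\otimes_R R' \cong (R')^n$ being finite projective of rank $n$ over $R'$, faithfully flat descent yields that $A$ is itself finite projective of rank $n$ over $R$. Formation of the trace map commutes with base change for finitely generated projective modules, so the trace pairing $A\to \dual{A}$ base-changes to the trace pairing of $(R')^n$ over $R'$, i.e., the standard inner product, which is patently an isomorphism. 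Faithful flatness then lets me conclude that the trace pairing of $A$ is already an isomorphism, so $A$ is finite \'etale.

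For $(\Rightarrow)$, I would argue by induction on $n$. The case $n=0$ is trivial with $R'=R$. For $n\geq 1$, the key input is the classical fact (see \cite[Chapter~$4$]{GaloisSchemes}) that a finite \'etale $R$-algebra $A$ admits a separability idempotent $e\in A\otimes_R A$: one has $\mu(e)=1$ (where $\mu\colon A\otimes_R A\to A$ is multiplication) and $(a\otimes 1)e=(1\otimes a)e$ for every $a\in A$, whence $A\otimes_R A\cong A\times A_1$ as $A$-algebras via the first factor, with $A_1=(1-e)(A\otimes_R A)$ finite \'etale of rank $n-1$ over $A$. Since $A$ is finite projective of positive rank over the connected ring $R$, it is faithfully flat, so $R\hookrightarrow A$. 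Applying the induction hypothesis to $A_1/A$ produces a finite projective $A$-algebra $R'$ with $A\hookrightarrow R'$ and $A_1\otimes_A R'\cong (R')^{n-1}$. Then $R'$ is finite projective over $R$ (composition of finite projective extensions), $R\hookrightarrow R'$, and
\[
A\otimes_R R' \cong (A\otimes_R A)\otimes_A R' \cong (A\times A_1)\otimes_A R' \cong R'\times (R')^{n-1} \cong (R')^n,
\]
closing the induction.

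The main technical obstacle is the existence of the separability idempotent, which is standard and proved in the cited reference using precisely the fact that the trace form is perfect. A secondary subtlety is that when iterating the construction the base algebra $A$ need not be connected, so strictly speaking one should first decompose $\mathrm{Spec}\,A$ into connected components (finitely many, since $A$ is finite over connected $R$ with constant rank and the idempotents of $A$ can be controlled via the splittings above) and apply the inductive hypothesis to each piece before assembling.
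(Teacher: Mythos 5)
Your argument is correct, and it is a real, essentially self-contained proof (granted the existence of the separability idempotent, which you cite); the paper, by contrast, gives no argument at all here and simply refers the whole proposition to Theorem 5.10 of Lenstra's notes, ``translating to the affine case.'' Your $(\Leftarrow)$ direction via faithfully flat descent of the trace pairing is the standard one and is fine; connectedness of $R$ plus injectivity of $R\to R'$ is exactly what forces $R'$ to have constant positive rank and hence be faithfully flat. Your $(\Rightarrow)$ direction via $A\otimes_R A\cong A\times A_1$ is the classical splitting-algebra construction. The one wrinkle is the one you flag: after a single step the base becomes $A$, which need not be connected, so the proposition as stated is not the right inductive hypothesis. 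Your proposed fix by decomposing $\mathrm{Spec}\,A$ into components does work (over a connected ring, a finite projective algebra of constant rank $n$ has at most $n$ pairwise orthogonal nonzero idempotents, since each nonzero factor has positive constant rank and these ranks sum to $n$), but it is cleaner to simply strengthen the inductive claim: for an \emph{arbitrary} ring $R$ and a finite \'etale $R$-algebra $A$ of constant rank $n$, there is a finite projective $R$-algebra $R'$ of everywhere positive rank (hence faithfully flat, hence $R\hookrightarrow R'$) such that $A\otimes_R R'\cong (R')^n$ as $R'$-algebras. That version passes directly from $A_1/A$ with no case analysis, and specializes to the paper's statement when $R$ is connected. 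Finally, a small reading point: the proposition's phrase ``isomorphic to $(R')^n$ as an $R$-algebra'' should be read as an $R'$-algebra isomorphism, which is what your proof produces and uses.
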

\begin{proof}
This is standard, for example follows from \cite[Theorem $5.10$]{GaloisSchemes}
by translating the conditions for schemes to the affine case.
\end{proof}

\begin{Def}
Given a profinite group $G$ the \emph{category of finite $G$-sets} is the
category whose objects are finite sets equipped with a continuous action of $G$,
and morphisms are maps compatible with the action.
\end{Def}

\begin{Theo}\label{theorem-etale-equivalence}
Let $R$ be a connected ring and let $K$ be a separably closed field. Let
$\alpha\colon R\to K$ be a ring homomorphism. Then there exist
\begin{enumerate}
\item A profinite group $\pi = \pi(R,\alpha)$.
\item An equivalence of categories \[F\colon \{\textrm{finite \'etale }
R\textrm{-algebras}\}^{\mathrm{op}} \to \{\textrm{finite }
\pi\textrm{-sets}\}.\]
\item An isomorphism of functors from $\Hom_{R\textrm{-}\mathrm{Alg}}(-, K)$ to
the composition of the forgetful functor $\{\textrm{finite }
\pi\textrm{-sets}\}\to \{\textrm{sets}\}$ with $F$.
\end{enumerate}
Moreover, we have:
\begin{itemize}
\item[{a}.] The group $\pi$ is uniquely determined up to isomorphism.
\item[{b}.] For all finite sets $T$ we have $F(R^T) = T$ with trivial action of
$\pi$.
\item[{c}.] Given finite \'etale $R$-algebras $A$ and $B$ the tensor product
$A\otimes_R B$ is \'etale and $F(A\otimes_R B) = F(A) \times F(B)$ with the
induced action.
\end{itemize}
\end{Theo}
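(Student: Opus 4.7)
The plan is to follow the classical Grothendieck-style construction of the \'etale fundamental group as laid out in \cite[Chapter 5]{GaloisSchemes}; the theorem is essentially Lenstra's main result in that reference, so the argument amounts to a guided tour of it rather than a self-contained proof.

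The first step is to define the \emph{fiber functor}
\[
\omega\colon A \mapsto \Hom_{R\textrm{-}\mathrm{Alg}}(A, K)
\]
on the category of finite \'etale $R$-algebras (contravariantly, matching the opposition in the statement), and to check that $\omega(A)$ is always a finite set. By Proposition \ref{etale-equivalent-def}, $A$ becomes isomorphic to $(R')^n$ after some faithfully flat base change $R \to R'$, and since $K$ is separably closed one can extend $\alpha$ to a ring map $R' \to K$; this produces exactly $n$ $R$-algebra maps $A \to K$ over the chosen extension, so $\omega(A)$ is finite. Connectedness of $R$ will be used to ensure that the fiber over a connected \'etale algebra is a single orbit under the group to be constructed.

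The second step is to prove that $\omega$ is pro-representable by a cofiltered system of connected Galois covers $(A_i)_{i \in I}$, each equipped with a compatible $x_i \in \omega(A_i)$. One then sets
\[
\pi := \mathrm{Aut}(\omega) \cong \varprojlim_{i \in I} \mathrm{Aut}_R(A_i)^{\mathrm{op}},
\]
with the natural profinite topology coming from an inverse limit of finite groups. The functor $F$ sends $A$ to $\omega(A)$ equipped with its induced continuous $\pi$-action; that $F$ is an equivalence and that $\pi$ is unique up to isomorphism then follow from standard Yoneda-type arguments, exactly as in \cite[Chapter 5]{GaloisSchemes}.

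The supplementary properties come almost for free. For $A = R^T$ an $R$-algebra map $R^T \to K$ is determined by the unique $t \in T$ for which the $t$-th idempotent is sent to $1$, and $\pi$ acts trivially on this data since it fixes the image of $\alpha$, giving $F(R^T) = T$ with trivial action. For $A \otimes_R B$, \'etaleness is preserved by a common splitting cover argument using Proposition \ref{etale-equivalent-def}, and the bijection $\Hom_R(A \otimes_R B, K) \cong \Hom_R(A, K) \times \Hom_R(B, K)$ is visibly $\pi$-equivariant. The main technical obstacle will be pro-representability of $\omega$ together with transitivity of $\pi$ on the fibers of connected algebras, which requires constructing sufficiently many Galois covers; since this is the core of the theory in \cite[Chapter 5]{GaloisSchemes}, the cleanest way to finish is by direct citation rather than reproduction.
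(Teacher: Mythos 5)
Your proposal is correct and follows essentially the same route as the paper, which simply records the theorem as a combination of standard results from Lenstra's \emph{Galois theory for schemes} (\cite[Section 5]{GaloisSchemes}) without reproving them. Your expanded sketch (fiber functor, pro-representability, $\pi = \mathrm{Aut}(\omega)$, then deferring the core to the cited reference) is just a more explicit version of the same citation-based argument.
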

\begin{proof}
This is a combination of standard results on finite \'etale algebras, and on
equivalences of categories. Results in \cite[Section $5$]{GaloisSchemes} contain
everything that is needed.
\end{proof}

\begin{Def}
The group $\pi$ in Theorem \ref{theorem-etale-equivalence} is called the 
\emph{\'etale fundamental group of $R$ in $\alpha$}.
\end{Def}

\begin{Pro}\label{Sclo-etale}
Let $R$ be a connected ring and $K$ a separably closed field. Let $\alpha\colon
R\to K$ be a ring homomorphism and let $\pi$ be the \'etale fundamental group
of $R$ in $\alpha$. Let $A$ be a finite \'etale $R$-algebra corresponding to a
$\pi$-set $T$. Then for any finite set $S$ the algebra $A^{(S)}$ is finite
\'etale and corresponds to the set $I$ of injective maps $S\to T$ with the
action of $\pi$ induced by the one on $T$.
\end{Pro}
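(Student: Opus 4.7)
The plan is to reduce to the split étale case via base change, apply the product formula to describe $A^{(S)}$ after base change, deduce that $A^{(S)}$ is étale of rank $\#I$, and then identify the $\pi$-set by matching cardinalities against an evident injection $I\hookrightarrow F(A^{(S)})$. By Proposition \ref{etale-equivalent-def} choose a finite projective $R$-algebra $R'$ with $R\hookrightarrow R'$ and $A\otimes_R R'\cong (R')^T$. The construction of the $S$-closure in Proposition \ref{pro-gal-constr} commutes with base change (as invoked in the proof of Theorem \ref{theo-prod-formula}), so $A^{(S)}\otimes_R R'\cong\bigl((R')^T\bigr)^{(S)}$. Viewing $(R')^T$ as the product over $T$ of copies of the rank-$1$ algebra $R'$ and applying Theorem \ref{theo-prod-formula}, we get
\[
\bigl((R')^T\bigr)^{(S)} \;\cong\; \prod_{G\colon S\to T}\;\bigotimes_{t\in T}(R')^{(G^{-1}(t))}.
\]
By Proposition \ref{pro-Sclo-special-cases} the factor $(R')^{(G^{-1}(t))}$ vanishes when $\#G^{-1}(t)\geq 2$ and equals $R'$ otherwise; hence only injective $G$ contribute, each giving a single copy of $R'$, and $A^{(S)}\otimes_R R'\cong (R')^{I}$. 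Since $A^{(S)}$ is a quotient of the finitely generated $R$-module $A^{\otimes S}$, faithfully flat descent along $R\to R'$ shows it is locally free of rank $\#I$, and Proposition \ref{etale-equivalent-def} with the same $R'$ shows it is finite étale.

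To identify $F(A^{(S)})$ as a $\pi$-set, recall that by point $3$ of Theorem \ref{theorem-etale-equivalence} its underlying set is $\Hom_{R\textrm{-}\mathrm{Alg}}(A^{(S)}, K)$, which by Definition \ref{universal-property} consists of tuples $(\beta_s)_{s\in S}\in T^S$, with $T = \Hom_{R\textrm{-}\mathrm{Alg}}(A,K)$, satisfying the divisibility condition of Definition \ref{universal-property}. Since $A$ is étale and $K$ is separably closed, $A\otimes_R K\cong K^T$ with the $T$ projections corresponding to $T$, and Remark \ref{rem-gen-elem-charpoly} gives $P_a(X) = \prod_{t\in T}(X-t(a))$ in $K[X]$; the analogous statement over any base change $R\to R''$ follows similarly. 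Every injection $f\colon S\hookrightarrow T$ therefore satisfies the divisibility condition and yields a unique $R$-algebra map $\tilde f\colon A^{(S)}\to K$ with $\tilde f\circ\alpha_s = f(s)$; the assignment $f\mapsto\tilde f$ is injective because the elements of $T$ are pairwise distinct as maps $A\to K$ and the $\alpha_s(A)$ generate $A^{(S)}$. Since $A^{(S)}$ is finite étale of rank $\#I$, $A^{(S)}\otimes_R K\cong K^{\#I}$, hence $\#\Hom_{R\textrm{-}\mathrm{Alg}}(A^{(S)},K) = \#I$, so this injection is a bijection.

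Finally, $\sigma\in\pi$ acts on $\Hom_{R\textrm{-}\mathrm{Alg}}(-,K)$ by post-composition with its action on $K$, which induces $\sigma\cdot t = \sigma\circ t$ on $T$ and $(\sigma\cdot f)(s) = \sigma\circ f(s)$ on $I$. The bijection $f\mapsto\tilde f$ is equivariant because $(\sigma\circ\tilde f)\circ\alpha_s = \sigma\circ f(s) = (\sigma\cdot f)(s) = \widetilde{\sigma\cdot f}\circ\alpha_s$ for every $s\in S$, and the $\alpha_s(A)$ generate $A^{(S)}$. The main obstacle is that a direct combinatorial description of $\Hom_{R\textrm{-}\mathrm{Alg}}(A^{(S)},K)$ would require ruling out non-injective tuples by an extra argument specific to étale algebras; this is sidestepped by first establishing the correct rank of $A^{(S)}$ and then matching cardinalities.
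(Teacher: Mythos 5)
Your proof is correct and uses the same reduction-to-split strategy as the paper: base change along a finite projective injective $R\to R'$ splitting $A$, invoke the product formula (Theorem~\ref{theo-prod-formula}) together with Proposition~\ref{pro-Sclo-special-cases} to get $\bigl((R')^T\bigr)^{(S)}\cong (R')^I$, and conclude via Proposition~\ref{etale-equivalent-def} that $A^{(S)}$ is finite \'etale of rank $\#I$. The one place you genuinely diverge is in identifying the $\pi$-set: you build the bijection $I\to\Hom_R(A^{(S)},K)$ by hand from the universal property of $A^{(S)}$ and then match cardinalities (using the rank just computed) to get surjectivity. The paper does this step more economically: it first treats the split case over the base ring itself, getting $(R^T)^{(S)}\cong R^I$ when the $\pi$-action is trivial; then, for the $\pi$-set identification, it notes that $\Hom_R(A^{(S)},K)\cong\Hom_K\bigl((A\otimes_R K)^{(S)},K\bigr)$ as $\pi$-sets because the construction of the $S$-closure commutes with base change, and since $A\otimes_R K\cong K^T$ this reduces immediately to the split computation already done. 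That bypasses any need to rule out non-injective tuples or appeal to a cardinality count. Both arguments are sound; the paper's version is shorter because it reuses the split computation twice (over $R'$ for \'etale-ness and over $K$ for the $\pi$-set), whereas you compensate with the rank-then-count step. One small point worth making explicit in your write-up: when you describe $\Hom_{R\textrm{-}\mathrm{Alg}}(A^{(S)},K)$ as the set of tuples $(\beta_s)_s\in T^S$ satisfying the divisibility condition, the nontrivial direction is that any $\varphi\colon A^{(S)}\to K$ yields such a tuple; this is because applying $\varphi\otimes\Id_{R''}$ to the universal divisibility relation in $A^{(S)}\otimes_R R''[X]$ preserves the divisibility.
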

\begin{proof}
First suppose the action of $\pi$ on $T$ is trivial so that $A = R^T$. In this
case by Theorem \ref{theo-prod-formula} the $S$-closure of $A$ is a product
indexed over all maps $F\colon S\to T$ of the $F^{-1}(t)$-closure of $R$, for
$t$ in $T$. If $F$ is not injective then there exists a $t\in T$ such that
$F^{-1}(t)$ has at least two elements so the $F^{-1}(t)$-closure of $R$ is zero
by number \emph{1} of Proposition \ref{pro-Sclo-special-cases}. If $F$ is
injective, then for all $t\in T$ the $F^{-1}(t)$-closure of $R$ is $R$, by
number \emph{3} of Proposition \ref{pro-Sclo-special-cases}. Hence the
$S$-closure of $A$ is $R^{I}$.

In general, by Proposition \ref{etale-equivalent-def} there exists a finite
projective $R$-algebra $R'$ with $R\to R'$ injective such that $A\otimes_R R'$
is isomorphic to $(R')^n$ for some $n\geq 0$. We proved the $S$-closure of
$(R')^n$ is isomorphic to $(R')^N$ for some $N$, then by Proposition
\ref{etale-equivalent-def} the $S$-closure of $A$ is finite \'etale. The
$\pi$-set corresponding to $A^{(S)}$ is $\Hom_R(A^{(S)}, K)$, which is
isomorphic to $\Hom_K((A\otimes_R K)^{(S)}, K)$ as $\pi$-sets. We are then
reduced to proving that for $A$ an \'etale algebra over $K$, corresponding to a
set $T$, the $S$-closure of $A$ corresponds to $I$. Since in this case $A = K^T$
this was proven already.
\end{proof}

\begin{Rem}\label{rem-expected-rank}
Note that Proposition \ref{Sclo-etale} implies that if $A$ is an \'etale
$R$-algebra of rank $n$ then $A^{(S)}$ is \'etale of rank $n (n - 1) \cdots (n -
\#S +1)$. This will be called the \emph{expected rank} of $A^{(S)}$. For general
algebras over fields it is possible that the rank of the $S$-closure is not the
expected one. We will give examples in Section \ref{galois-closure-examples}.
\end{Rem}

An important consequence of the product formula is Proposition
\ref{pro-Sclo-nontriv}, which says that the $S$-closure is not zero, excluding
trivial cases. This was not previously known for the construction of Bhargava
and Satriano. We need some facts about algebras over fields, which will be used
also later on.

\begin{Lem}\label{lem-finite-algebra-over-field}
Let $K$ be a field and let $A$ be a finite $K$-algebra. Then $A$ is isomorphic
to a finite product of $K$-algebras $\prod_i A_i$ with $A_i$ local with
nilpotent maximal ideal.
\end{Lem}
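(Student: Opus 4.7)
The plan is to use that a finite-dimensional algebra over a field is Artinian, and then invoke the standard structure theorem for Artinian rings via the Chinese Remainder Theorem.

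First I would observe that since $A$ is finite-dimensional as a $K$-vector space, every descending chain of ideals (which are in particular $K$-subspaces) must stabilize, so $A$ is an Artinian ring. Next I would show that $A$ has only finitely many maximal ideals $\mathfrak{m}_1, \ldots, \mathfrak{m}_r$: if there were infinitely many, the intersections $\mathfrak{m}_1 \cap \cdots \cap \mathfrak{m}_k$ would form a strictly descending chain (using that distinct maximal ideals are pairwise coprime, so each new intersection is strictly smaller by a prime avoidance style argument), contradicting the Artinian property.

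Then I would show that the Jacobson radical $J = \mathfrak{m}_1 \cap \cdots \cap \mathfrak{m}_r$ is nilpotent. In an Artinian ring, the chain $J \supseteq J^2 \supseteq J^3 \supseteq \cdots$ must stabilize at some $J^N$; standard arguments (e.g.\ considering a minimal ideal $I$ with $I J^N = I$ and applying Nakayama) show $J^N = 0$. Consequently $(\mathfrak{m}_1 \cdots \mathfrak{m}_r)^N \subseteq J^N = 0$, so $\prod_i \mathfrak{m}_i^N = 0$.

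Since the $\mathfrak{m}_i$ are distinct maximal ideals they are pairwise coprime, and therefore so are their powers $\mathfrak{m}_i^N$. The Chinese Remainder Theorem then gives
\[
A \;\cong\; \prod_{i=1}^{r} A/\mathfrak{m}_i^{N}.
\]
Finally I would verify that each factor $A_i := A/\mathfrak{m}_i^N$ is local with nilpotent maximal ideal: any maximal ideal of $A_i$ corresponds to a maximal ideal of $A$ containing $\mathfrak{m}_i^N$, hence (since such an ideal is prime and contains a power of $\mathfrak{m}_i$) containing $\mathfrak{m}_i$, so it must equal $\mathfrak{m}_i$. Thus $A_i$ has unique maximal ideal $\mathfrak{m}_i/\mathfrak{m}_i^N$, which is nilpotent by construction. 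The step most requiring care is the nilpotence of the Jacobson radical, but this is a classical fact for Artinian rings and can simply be cited.
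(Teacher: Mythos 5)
Your proof is correct and is precisely the standard structure theorem for Artinian rings, which is exactly the result the paper cites (Eisenbud, Corollary 2.15) without reproducing it. You have simply unpacked that reference; the approach is the same.
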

\begin{proof}
Follows from \cite[Corollary $2.15$, page $76$]{EisenbudComm}. It can also be
proved directly as in \cite[Theorem $2.6$]{GaloisSchemes}.
\end{proof}

\begin{Cor}\label{lem-local-over-alg-closed-has-unique-map}
Let $K$ be an algebraically closed field and let $A$ be a finite $K$-algebra.
Then $A$ is local if and only if there exists a unique $K$-algebra map $A\to K$.
\end{Cor}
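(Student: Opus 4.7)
The plan is to use Lemma \ref{lem-finite-algebra-over-field} to reduce to a product of local factors, then count $K$-algebra maps to $K$ factor by factor, exploiting that $K$ is algebraically closed.

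First I would apply Lemma \ref{lem-finite-algebra-over-field} to write $A \cong \prod_{i=1}^r A_i$ with each $A_i$ a local $K$-algebra whose maximal ideal $\mathfrak{m}_i$ is nilpotent. The key observation is that if $\varphi\colon A_i \to K$ is any $K$-algebra homomorphism, then $\ker(\varphi)$ is a maximal ideal of $A_i$ (since $K$ is a field and $\varphi$ is a $K$-algebra map, so surjective), and since $A_i$ is local this forces $\ker(\varphi) = \mathfrak{m}_i$. Thus $\varphi$ factors uniquely as $A_i \to A_i/\mathfrak{m}_i \hookrightarrow K$. The residue field $A_i/\mathfrak{m}_i$ is a finite field extension of $K$ (it is a quotient of the finite-dimensional $K$-algebra $A_i$), and since $K$ is algebraically closed we have $A_i/\mathfrak{m}_i = K$. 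So the natural quotient $A_i \to A_i/\mathfrak{m}_i = K$ is the unique $K$-algebra map from $A_i$ to $K$.

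Now I would use the standard fact (proved as Lemma \ref{lem-connected-rings} when $B = K$ is connected) that any $K$-algebra map $A = \prod_i A_i \to K$ factors through a unique projection $\pi_i \colon A \to A_i$. Combining this with the previous paragraph, the set of $K$-algebra maps $A \to K$ is in bijection with the index set $\{1,\ldots,r\}$. Thus there is a unique such map if and only if $r = 1$, i.e.\ if and only if $A$ is local. This establishes both directions of the equivalence.

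The argument is almost purely bookkeeping once the two ingredients (decomposition into local factors and the fact that a map from a product to a connected ring factors through one projection) are in hand; there is no real obstacle. The only mild subtlety is making sure that algebraic closedness is used precisely where needed, namely to identify $A_i/\mathfrak{m}_i$ with $K$ so that the canonical quotient really does land in $K$ rather than in a nontrivial finite extension.
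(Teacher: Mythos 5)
Your proof is correct and follows essentially the same route as the paper's: both decompose $A$ via Lemma \ref{lem-finite-algebra-over-field} and use algebraic closedness to identify each residue field $A_i/\mathfrak{m}_i$ with $K$, giving exactly one $K$-algebra map per local factor. The only cosmetic difference is that you handle both directions at once via a bijection (explicitly invoking Lemma \ref{lem-connected-rings}), whereas the paper argues the forward direction directly from $A = K\oplus\mathfrak{m}$ and only uses the product decomposition for the converse.
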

\begin{proof}
Suppose $A$ is local with maximal ideal $\mathfrak{m}$. Since $A$ is finite over
$K$ the residue field is an algebraic extension of $K$. Since $K$ is
algebraically closed the residue field is $K$. In particular, there exists a
$K$-algebra map $\pi\colon A\to K$. Since $A = K \oplus \mathfrak{m}$, and the
kernel of any map $A\to K$ is $\mathfrak{m}$, all those maps are equal to
$\pi$. So if $A$ is local $\pi$ is the unique map $A\to K$.

By Lemma \ref{lem-finite-algebra-over-field} any $K$-algebra $A$ is a finite
product of local algebras $A_i$. For each $A_i$ we have a map $A \to A_i\to K$,
and different factors give different maps. So if there is a unique map $A\to K$
there is one factor in the product, and hence $A$ is local.
\end{proof}

\begin{Lem}\label{cor-Sclo-local}
Let $K$ be an algebraically closed field and let $A$ be a connected $K$-algebra
of rank $n$. Let $S$ be a set with $\#S\leq n$. Then $A^{(S)}$ is local.
\end{Lem}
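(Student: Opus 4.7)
The plan is to apply Corollary \ref{lem-local-over-alg-closed-has-unique-map}: since $A^{(S)}$ is a quotient of the finite $K$-algebra $A^{\otimes S}$, it is itself a finite $K$-algebra, and by Proposition \ref{pro-Sclo-nontriv} it is nonzero (as $\#S\leq n$). Hence it suffices to show there is exactly one $K$-algebra morphism $A^{(S)}\to K$.

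First I would use the hypothesis that $A$ is connected together with Lemma \ref{lem-finite-algebra-over-field} to conclude that $A$ itself is local, with nilpotent maximal ideal $\mathfrak{m}$; since $K$ is algebraically closed, the residue field is $K$, so Corollary \ref{lem-local-over-alg-closed-has-unique-map} gives a unique $K$-algebra map $\pi\colon A\to K$, and one has a $K$-vector space decomposition $A=K\oplus\mathfrak{m}$. By the universal property of $A^{(S)}$ (Definition \ref{universal-property}), $K$-algebra morphisms $A^{(S)}\to K$ correspond bijectively to families $(\beta_s)_{s\in S}$ of $K$-algebra maps $A\to K$ satisfying the divisibility condition $\prod_{s\in S}(X-(\beta_s\otimes\mathrm{Id})(a))\mid P_a(X)$ for every $K$-algebra $R'$ and every $a\in A\otimes_K R'$. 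By uniqueness of $\pi$, any such family must have $\beta_s=\pi$ for all $s\in S$, so there is at most one candidate.

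The main step, then, is to check that this single candidate does satisfy the divisibility condition; this is the only place where one really uses $\#S\leq n$. Given $R'$ and $a\in A\otimes_K R'$, set $c=(\pi\otimes\mathrm{Id})(a)\in R'$ and write $a=c+m$ with $m\in\mathfrak{m}\otimes_K R'$. Because $\mathfrak{m}$ is nilpotent, say $\mathfrak{m}^k=0$, one has $m^k=0$ in $A\otimes_K R'$, so multiplication by $m$ is a nilpotent endomorphism of the rank $n$ module $A\otimes_K R'$. A short local computation (reducing to primes of $R'$, where the module is free and the endomorphism is a strictly triangularisable matrix) gives $P_m(X)=X^n$, and hence
\[
P_a(X)=P_{c+m}(X)=P_m(X-c)=(X-c)^n.
\]
On the other hand $(\beta_s\otimes\mathrm{Id})(a)=c$ for every $s$, so $\prod_{s\in S}(X-(\beta_s\otimes\mathrm{Id})(a))=(X-c)^{\#S}$, which divides $(X-c)^n$ in $R'[X]$ precisely because $\#S\leq n$. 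The main obstacle is this verification of the divisibility condition over arbitrary $R'$; everything else is a formal consequence of Lemma \ref{lem-finite-algebra-over-field}, Corollary \ref{lem-local-over-alg-closed-has-unique-map}, and the universal property.
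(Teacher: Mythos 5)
Your proposal is correct and follows the same route as the paper: reduce to Lemma \ref{lem-finite-algebra-over-field} and Corollary \ref{lem-local-over-alg-closed-has-unique-map} to see $A$ is local with nilpotent maximal ideal $\mathfrak{m}$ and residue field $K$, produce the unique candidate family $\beta_s=\pi$ for all $s$, and check the divisibility condition using $P_a=(X-c)^n$ and $\#S\le n$. The one place you are more explicit than the paper is worth commenting on: the paper's proof writes the divisibility check only for $a\in A$, whereas Definition \ref{universal-property} requires it for $a\in A\otimes_K R'$ over every $K$-algebra $R'$; your argument carries out that check carefully, writing $a=c+m$ with $c\in R'$ and $m\in\mathfrak{m}\otimes_K R'$ nilpotent, computing $P_m(X)=X^n$ locally on $R'$, and deducing $P_a(X)=(X-c)^n$. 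This fills in a step the paper passes over silently, so if anything your version is the more complete write-up of the same idea. One small logical remark: you also invoke Proposition \ref{pro-Sclo-nontriv} to ensure $A^{(S)}\ne 0$ before applying Corollary \ref{lem-local-over-alg-closed-has-unique-map}; the paper doesn't cite it there (and in fact Proposition \ref{pro-Sclo-nontriv} is proved later, partly relying on the present lemma), but your proof already exhibits a $K$-algebra map $A^{(S)}\to K$, which forces $A^{(S)}\ne 0$ without appealing to that proposition, so no circularity arises.
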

\begin{proof}
By Lemma \ref{lem-finite-algebra-over-field} and Lemma
\ref{lem-local-over-alg-closed-has-unique-map}, we have that $A$ is local with
nilpotent maximal ideal $\mathfrak{m}$ and residue field $K$. Let $f\colon
A\to K$ be the quotient map. Any element $a\in A$ can be written as $r + m$ with
$m \in \mathfrak{m}$ and $r \in K$. With this notation one has $f(a) = r$ and
the characteristic polynomial of $a$ is $(X - r)^n$. Since this is a multiple of
$(X - f(a))^{\# S}$, the universal property of $A^{(S)}$ gives a unique
$K$-algebra map $\varphi\colon A^{(S)}\to K$ such that for all $s\in S$ we have
$\varphi\circ\delta_s = f$.

Any other $K$-algebra map $A^{(S)}\to K$ must also satisfy the same condition,
and hence coincide with $\varphi$. So $A^{(S)}$ is local by Lemma
\ref{lem-local-over-alg-closed-has-unique-map}.
\end{proof}

\begin{Pro}\label{pro-Sclo-nontriv}
Let $R$ be a non-zero ring and $A$ an $R$-algebra of rank $n>0$. Let $S$ be a
set with $\#S \leq n$. Then $A^{(S)}$ is not zero.
\end{Pro}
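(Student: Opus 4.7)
The strategy is to reduce to the case of an algebraically closed field by base change, and then to combine the product formula (Theorem \ref{theo-prod-formula}) with the structure theorem for finite algebras over a field (Lemma \ref{lem-finite-algebra-over-field}) and the local/non-zero criterion of Lemma \ref{cor-Sclo-local}.

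First, since $R\neq 0$, I would pick a maximal ideal $\mathfrak{m}\subset R$ and an algebraic closure $K$ of the residue field $R/\mathfrak{m}$; the composite $R\to R/\mathfrak{m}\to K$ makes $K$ into a non-zero $R$-algebra. The $S$-closure commutes with base change: this follows formally from the universal property in Definition \ref{universal-property} (or from the explicit description in Proposition \ref{pro-gal-constr}, since the generic element, the characteristic polynomial and the ideal $J^{(S)}$ all base-change in the obvious way), and is already used tacitly at the end of the proof of Theorem \ref{theo-prod-formula}. Therefore $A^{(S)}\otimes_R K\cong (A\otimes_R K)^{(S)}$, and since the base change of the zero ring is zero, it suffices to prove the statement when $R=K$ is algebraically closed and $A$ is a $K$-algebra of rank $n$.

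Under this assumption, by Lemma \ref{lem-finite-algebra-over-field} one can write $A\cong\prod_{i=1}^m A_i$ with each $A_i$ local of some rank $n_i\geq 1$ and with $\sum_i n_i=n$. Because $\#S\leq n=\sum_i n_i$, a routine counting argument produces a map $F\colon S\to\{1,\ldots,m\}$ with $\#F^{-1}(i)\leq n_i$ for every $i$ (one distributes the elements of $S$ among the indices, filling each bin of capacity $n_i$). Applying Theorem \ref{theo-prod-formula}, the $K$-algebra $A^{(S)}$ is a product indexed by all such maps, one of whose factors is $\bigotimes_{i=1}^m A_i^{(F^{-1}(i))}$ (tensor product over $K$). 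Each $A_i$ is connected because it is local, and $\#F^{-1}(i)\leq n_i$, so Lemma \ref{cor-Sclo-local} tells us that every tensor factor $A_i^{(F^{-1}(i))}$ is a local, and in particular non-zero, $K$-algebra. Since the tensor product over a field of non-zero vector spaces is non-zero, this direct factor of $A^{(S)}$ is non-zero, and hence $A^{(S)}\neq 0$.

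I do not foresee any genuine obstacle. The only step that is not immediately quotable from a lemma stated earlier is the base-change compatibility of the $S$-closure invoked in the reduction to an algebraically closed field, but this is a formal consequence of the universal property and is already implicitly in use in the paper.
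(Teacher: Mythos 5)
Your proof is correct and follows essentially the same route as the paper: reduce to an algebraically closed field by base change, decompose $A$ into local factors via Lemma \ref{lem-finite-algebra-over-field}, apply the product formula, and use Lemma \ref{cor-Sclo-local} to exhibit a non-zero factor. The only cosmetic differences are the order of the reduction steps (you base-change first, the paper last) and that you spell out the pigeonhole argument for finding a map $F$ with $\#F^{-1}(i)\le n_i$, which the paper leaves implicit.
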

\begin{proof}
Suppose first that $A$ is connected over an algebraically closed field. By
Corollary \ref{cor-Sclo-local} in this case $A^{(S)}$ is local and hence not
zero.

If $A$ is any algebra of rank $n$ over an algebraically closed field, then by
Lemma \ref{lem-finite-algebra-over-field}, we have that $A$ is a finite product
of connected $K$-algebras, and by the above and the assumption on $\#S$, not all
the $A^{(F)}$ in the formula in Thoerem \ref{theo-prod-formula} are zero, so
$A^{(S)}$ is not zero.

For the general case given $R\to K$ with $K$ an algebraically closed field the
non-zero algebra $(A\otimes_R K)^{(S)}$ is isomorphic to $A^{(S)}\otimes_R K$
since the $S$-closure commutes with base change. So $A^{(S)}$ is not zero, as we
wanted to show.
\end{proof}

%%%%%%%%%%%%%%%%%%%%%%%%%%%%%%%%%%%%%%%%%%%%%%%%%%%%%%%%%%%%%%%%%%%%%%%%%%%%%%%

\section{Polynomial laws}\label{sec-furt-gen}

Given an $R$-algebra $A$ of rank $n$ and a sequence $n_1,\ldots, n_t$ with
$\sum_i n_i = n$, Daniel Ferrand in \cite{FonctNorme} constructs an $R$-algebra
$P^{(n_1,\ldots, n_t)}(A)$, using norms. In this section we will show that
his definition is equivalent to the following: for all $R\to R'$ and all $a\in
A\otimes_R R'$ the characteristic polynomial of $a$ splits as a product of $t$
polynomials of degrees $n_1,\ldots, n_t$ in $P^{(n_1,\ldots, n_t)}(A)\otimes_R
R'[X]$, and $P^{(n_1,\ldots, n_t)}(A)$ is universal with this property.  We will
make this precise in Proposition \ref{pro-def-pa} and we will show in
Proposition \ref{pro-poly-law-rel-with-m-closure} that $P^{(1, \ldots, 1,
n-m)}(A)$ (with $m$ ones) is isomorphic to the $m$-closure of $A$.

The precise formulation of these results uses polynomial laws. This tool was
first introduced by Norbert Roby in \cite{PolMaps}; Daniel Ferrand in
\cite{FonctNorme} gives a very clear presentation of the topic, though not as
complete as the one by Roby. Here we will just give the basic definitions and 
properties we need.

In \cite[Lemme $4.1.1$]{FonctNorme} Daniel Ferrand also proves a product formula
that generalizes to $P^{(n_1,\ldots, n_t)}(A)$ the one we proved in Theorem
\ref{theo-prod-formula}. We will not give a proof of this formula here.

If $A$ is the algebra $R[x]/(f)$ for some monic polynomial $f$, a construction
for $P^{(n_1,\ldots, n_t)}(A)$ is given by Dan Laksov in \cite{FactAlg}. It is
probably possible to generalize the construction by Laksov to define
$P^{(n_1,\ldots, n_t)}(A)$ in a way similar to our definition of $A^{(m)}$,
using generic elements, but we will not do it here.

\begin{Def}
Let $R$ be a ring and let $M$ be an $R$-module. We denote by $\underline{M}$
the functor $R\textrm{-Alg} \to \textrm{Set}$ sending an $R$-algebra $S$ to the
set $M\otimes_R S$ and a morphism $f\colon S\to S'$ to $\mathrm{Id}_M\otimes f$.
\end{Def}

\begin{Def}
Let $R$ be a ring and let $M$ and $N$ be two $R$-modules. A \emph{polynomial
law} is a natural transformation $f\colon \underline{M} \to \underline{N}$.

In detail: a polynomial law $\underline{M}\to \underline{N}$ is given by a
(set-theoretical) map $f_{S}\colon M\otimes_R S \to N\otimes_R S$ for each
$R$-algebra $S$ such that for all $g\colon S \to S'$ the following diagram
commutes:
\[
\xymatrix{
M\otimes_R S  \ar[r]^{f_S} \ar[d]_{\mathrm{Id}_M\otimes g}&
N\otimes_R S  \ar[d]^{\mathrm{Id}_N\otimes g}\\
M\otimes_R S' \ar[r]_{f_{S'}}&
N\otimes_R S'
}
\]
\end{Def}

\begin{Rem}
Let $R$ be a ring. A polynomial law $\underline{R^m}\to\underline{R^n}$ is a
morphism $\mathbb{A}^m_R\to\mathbb{A}^n_R$.
\end{Rem}

\begin{Def}
Let $R$ be a ring and $M, N$ be $R$-modules. A polynomial law $f\colon
\underline{M}\to \underline{N}$ is called \emph{homogeneous} of degree $n$ if
for all $R$-algebras $S$, all elements $x\in M\otimes_R S$ and all elements
$s\in S$ we have:
\[
f_S(sx) = s^n f_S(x).
\]
\end{Def}

\begin{Ex}
Any polynomial law homogeneous of degree zero comes from a constant map $M\to
N$. Any polynomial law homogeneous of degree one comes from a linear map $M\to
N$. This is somehow surprising, since no additivity is required. A proof can be
found in \cite[Chapitre I, \S $11$]{PolMaps}.
\end{Ex}

\begin{Rem}
In this section a linear map $\alpha\colon M\to N$ will be seen as a polynomial
law $\underline{M}\to \underline{N}$ homogeneous of degree $1$. In particular
given an $R$-algebra $S$ the notation $\alpha_S$ will be used instead of
$\alpha\otimes \mathrm{Id}_S$.
\end{Rem}

\begin{Def}
Let $R$ be a ring and let $A, B$ be $R$-algebras. A polynomial law $f\colon
\underline{A}\to\underline{B}$ is called \emph{multiplicative} if for all
$R$-algebras $S$ and for all elements $x, y \in A\otimes_R S$ we have:
\[
f(xy) = f(x)f(y) \quad\textrm{ and }\quad f(1) = 1
\]
\end{Def}

\begin{Ex}
Let $M$ be a finitely generated projective $R$-module. The map $s_i$ defined in 
Section \ref{galois-closure-preliminaries} extends to a homogeneous polynomial 
law of degree $i$, for all $i$. If $A$ is an $R$-algebra of rank $n$, then the 
norm law $s_n\colon \underline{A}\to \underline{R}$ is moreover multiplicative.
\end{Ex}

The following proposition is in Roby (See \cite[Chapitre I, \S $12$]{PolMaps}),
but only in the case when $M$ is free.

\begin{Pro}\label{lemma-polynomial-law-generic-element}
Let $R$ be a ring, $M$ a projective finitely generated $R$-module, and $N$ be
any $R$-module. Denote by $S$ the ring $\Sym(\dual{M})$. Then for every element
$\eta$ in $N\otimes_R S$ there exists a unique polynomial law $f\colon
\underline{M}\to \underline{N}$ such that $f_{S}(\gamma) = \eta$, with $\gamma
\in M\otimes_R S$ the generic element of $M$.
\end{Pro}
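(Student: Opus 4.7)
The plan is to apply a Yoneda-style argument, using Proposition \ref{pro-gen-elem} as the key input. That proposition says exactly that $\Sym(\dual{M})$ represents the functor $R' \mapsto M\otimes_R R'$ on $R$-algebras, with $\gamma$ as universal element. Since the only thing beyond the free case treated by Roby is the availability of this representability for projective modules, the proof should go through verbatim once we have it. Write $S = \Sym(\dual{M})$.

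To construct $f$ from $\eta$: for every $R$-algebra $R'$ and every $x \in M\otimes_R R'$, Proposition \ref{pro-gen-elem} gives a unique $R$-algebra map $\varphi_x \colon S \to R'$ with $(\Id_M\otimes\varphi_x)(\gamma) = x$. I would define
\[
f_{R'}(x) := (\Id_N \otimes \varphi_x)(\eta) \in N\otimes_R R'.
\]
The first thing to check is naturality in $R'$. Given $g \colon R' \to R''$ and $x \in M\otimes_R R'$, set $x' = (\Id_M\otimes g)(x)$. The composite $g\circ\varphi_x$ sends $\gamma$ to $x'$, so by the uniqueness clause of Proposition \ref{pro-gen-elem} it equals $\varphi_{x'}$; applying $\Id_N \otimes -$ and evaluating at $\eta$ then yields
\[
(\Id_N\otimes g)(f_{R'}(x)) = (\Id_N \otimes g\circ\varphi_x)(\eta) = (\Id_N\otimes\varphi_{x'})(\eta) = f_{R''}(x'),
\]
which is the commutative square defining a polynomial law. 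To verify $f_S(\gamma) = \eta$, take $R' = S$ and $x = \gamma$: the identity of $S$ already satisfies $(\Id_M\otimes\Id_S)(\gamma) = \gamma$, so $\varphi_\gamma = \Id_S$ by uniqueness, and hence $f_S(\gamma) = \eta$.

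For uniqueness, suppose $f' \colon \underline{M} \to \underline{N}$ is any polynomial law with $f'_S(\gamma) = \eta$. For any $R$-algebra $R'$ and any $x\in M\otimes_R R'$, naturality of $f'$ applied to $\varphi_x \colon S \to R'$ gives
\[
f'_{R'}(x) = f'_{R'}\!\bigl((\Id_M\otimes\varphi_x)(\gamma)\bigr) = (\Id_N\otimes\varphi_x)(f'_S(\gamma)) = (\Id_N\otimes\varphi_x)(\eta) = f_{R'}(x),
\]
so $f' = f$. There is no real obstacle here, since the argument is formal once Proposition \ref{pro-gen-elem} is in hand; the only point to be careful about is that $\varphi_x$ itself depends on $x$, so naturality must be invoked along this varying family of maps rather than a single fixed one.
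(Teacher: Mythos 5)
Your proof is correct and follows essentially the same route as the paper's: both use Proposition \ref{pro-gen-elem} to produce the unique map $\varphi_x\colon \Sym(\dual{M})\to R'$ sending $\gamma$ to $x$, define $f_{R'}(x) = (\Id_N\otimes\varphi_x)(\eta)$, verify naturality through the uniqueness of $\varphi_y$, and prove uniqueness by applying naturality of a candidate law along $\varphi_x$. The only difference is ordering (you do existence before uniqueness, the paper does the reverse), which is immaterial.
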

\begin{proof}
We prove uniqueness first. Let $f\colon \underline{M}\to \underline{N}$ be a
polynomial law. Let $R'$ be any $R$-algebra and let $x\in M' = M\otimes_R R'$.
We show that knowing $f_S(\gamma)$ we can determine $f_{R'}(x)$. By Proposition
\ref{pro-gen-elem} there exists a unique $R$-algebra map $\varphi_x:\colon S\to
R'$ such that $\mathrm{Id}_M \otimes \varphi_x(\gamma) = x$. By definition of
polynomial law the following diagram commutes
\[
\xymatrix @C 3.5em{
M\otimes S \ar[r]^-{\mathrm{Id}_M\otimes\varphi_x} \ar[d]_{f_{S}} & 
M' \ar[d]^{f_{R'}} \\
N\otimes S \ar[r]^-{\mathrm{Id}_N\otimes\varphi_x} &
N'
}
\]
Then we have $f_{R'}(x) = (\mathrm{Id}_M\otimes \varphi_x)(f_S(\gamma))$. So
the claim is proved.

Now for existence, let $\eta$ be in $N\otimes_R S$. For any $R$-algebra $R'$
and any $x\in M'$ we define $f(x) = \mathrm{Id}_N\otimes\varphi_x(\eta)$. We
need to check this is a polynomial law, i.e. that, given two $R$-algebras $A$
and $B$, and a map $\psi\colon A\to B$ the following diagram commutes:
\[
\xymatrix @C 3.5em{
M \otimes_R A \ar[r]^{\mathrm{Id}_M\otimes\psi}\ar[d]_{f_{A}} &
M\otimes B \ar[d]^{f_B}\\
N\otimes A \ar[r]_{\mathrm{Id}_N\otimes\psi} & N\otimes B
}
\]
Fix $x$ in $M\otimes_R A$ and let $y$ be $\mathrm{Id}_M\otimes \psi(x)$ in
$M\otimes_R B$. The polynomial laws $\psi \circ \varphi_x$ and $\varphi_{y}$
both have value $y$ on $\gamma$, hence they are equal by the above. So we can
write:
\[
(\mathrm{Id}_N\otimes\psi)(f_A(x)) =
(\mathrm{Id}_N\otimes\psi)(\mathrm{Id}_N\otimes\varphi_x(\eta)) =
(\mathrm{Id}_N\otimes\varphi_y)(\eta) = f_B(y)
\]
and since $f_B(y) = f_B(\mathrm{Id}_M\otimes\psi(x))$, the diagram commutes as
we wanted to show.
\end{proof}

\begin{Rem}\label{rem-ring-poly-laws}
If $M$ is a projective finitely generated module, Lemma
\ref{lemma-polynomial-law-generic-element} gives an isomorphism 
\[
\mathscr{P}(M,N) \cong N\otimes_R \Sym(\dual{M}),
\]
in particular $\mathscr{P}(M,N)$ is in this case a graded module. If moreover
$B$ is an $R$-algebra, then $\mathscr{P}(M,B)$ has a structure of graded
$R$-algebra, and it is isomorphic to $B\otimes_R \Sym(\dual{M})$ as a graded
$R$-algebras.
\end{Rem}

Recall that given $R$-algebras $R\to A$ and $R\to S$, with $A$ finite locally
free, and an element $a$ in $A\otimes_R S$, we denote by $P_a\in S[X]$ the
characteristic polynomial of $a$. If $f$ is the endomorphism of $A\otimes_R S$
given by multiplication by $a$ this is defined as the determinant of the
endomorphism $( \Id \otimes X - f\otimes \Id )$. Equivalently, we can write it
as the image of $(X - a)\in A\otimes_R S[X]$ via the polynomial law $s_n$. We
can now give the definition of $A^{(n_1,\ldots, n_t)}$.

\begin{Pro}\label{pro-def-pa}
Let $A$ be an $R$-algebra of rank $n$. Let $C$ be an $R$-algebra and let 
$\delta_i\colon \underline{A}\to \underline{C}$ for $i = 1,\ldots, t$ be
polynomial laws. Suppose $\delta_i$ is homogeneous of degree $n_i$ and that
$\sum n_i$ is $n$. Then the following are equivalent:
\begin{enumerate}
\item The equality $s_n = \prod \delta_i$ holds (as polynomial laws) and
$(C,(\delta_i)_{i})$ is universal with this property.
\item For all $R\to S$ and for all $a\in A\otimes_R S$ we have
\[
P_a(X) = \prod_i \delta_{i, S[X]}(X - a)
\]
in $C\otimes_R S[X]$ and $(C,(\delta_i)_i)$ is universal with this property.
\item Let $\eta_i$ be the element of $C\otimes_R \Sym(\dual{A})$ corresponding
to $\delta_i$ and let $\mathrm{det}$ be the one corresponding to $s_n$. Then
$\mathrm{det}$ is equal to the product of the $\eta_i$ in $C\otimes_R
\Sym(\dual{A})$ and $(C,(\delta_i)_i)$ is universal with this property.
\end{enumerate}
\end{Pro}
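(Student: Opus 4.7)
The plan is to use Proposition \ref{lemma-polynomial-law-generic-element}, together with the graded-algebra isomorphism $\mathscr{P}(A,C)\cong C\otimes_R\Sym(\dual{A})$ from Remark \ref{rem-ring-poly-laws}, in order to translate the three conditions into statements that are visibly equivalent. Throughout, the structure map $R\to C$ lets me view any polynomial law $\underline{A}\to\underline{R}$ (such as $s_n$) as one from $\underline{A}$ to $\underline{C}$, and similarly view $\mathrm{det}\in\Sym(\dual{A})$ as an element of $C\otimes_R\Sym(\dual{A})$.

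The equivalence of (1) and (3) is essentially formal. Under the isomorphism $\mathscr{P}(A,C)\cong C\otimes_R\Sym(\dual{A})$, the product of polynomial laws corresponds to the product in $C\otimes_R\Sym(\dual{A})$, the law $s_n$ corresponds by definition to $\mathrm{det}$, and each $\delta_i$ corresponds to $\eta_i$. Hence $s_n=\prod\delta_i$ in $\mathscr{P}(A,C)$ if and only if $\mathrm{det}=\prod\eta_i$ in $C\otimes_R\Sym(\dual{A})$, so the universal objects with respect to these matching relations coincide.

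For the equivalence of (1) and (2), I use that, by the discussion preceding the proposition, $P_a(X)=s_{n,S[X]}(X-a)$ in $S[X]$. Thus (1) directly implies (2): specializing the equality $s_n=\prod\delta_i$ at the $R$-algebra $S[X]$ and the element $X-a\in A\otimes_R S[X]$ yields the claimed identity. Conversely, assume (2). For any $R$-algebra $S$ and any $a\in A\otimes_R S$, apply naturality of polynomial laws to the evaluation map $S[X]\to S$, $X\mapsto 0$: this sends $X-a$ to $-a$, so by homogeneity of $s_n$ and of each $\delta_i$ one obtains $(-1)^n s_{n,S}(a)=(-1)^n\prod_i\delta_{i,S}(a)$, and hence $s_{n,S}(a)=\prod_i\delta_{i,S}(a)$. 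Since this holds for all $S$ and $a$, the two polynomial laws are equal, which is (1).

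Finally, the universality clauses in each condition concern the same pair $(C,(\delta_i)_i)$ being terminal for the same class of test data, namely $R$-algebras equipped with polynomial laws $\delta_i'$ of the prescribed degrees satisfying one (equivalently, all) of the three relations; since the relations coincide so do the universal properties, and a unique morphism in one setting is a unique morphism in the others. The main obstacle, and really the only step with content, is the evaluation argument in $(2)\Rightarrow(1)$: one must invoke naturality with respect to $X\mapsto 0$ together with homogeneity of each $\delta_i$ in order to recover the full polynomial-law equality from its specialization at elements of the special form $X-a$.
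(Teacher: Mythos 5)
Your proof is correct and follows essentially the same route as the paper's: the paper also derives $(1)\Leftrightarrow(2)$ by noting $P_a(X)=s_{n,S[X]}(X-a)$ and then comparing constant terms via $\delta_{i,S}(-a)=(-1)^{n_i}\delta_{i,S}(a)$, and gets $(1)\Leftrightarrow(3)$ directly from Proposition~\ref{lemma-polynomial-law-generic-element} and Remark~\ref{rem-ring-poly-laws}. Your write-up merely makes explicit the naturality step (applying $X\mapsto 0$) that the paper's ``constant term'' phrasing leaves implicit.
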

\begin{proof}
The first two statements are equivalent since $P_a(X)$ is equal to
$s_{n, S[X]}(X - a)$, the constant term of $P_a(X)$ is $(-1)^ns_{n,S}(a)$ and
the constant term of the product in \emph{2} is
\[
\prod_i (-1)^{n_i}\delta_{i,S}(a) = (-1)^n\prod\delta_{i,S}(a)
\]

The third is equivalent with the first by Lemma
\ref{lemma-polynomial-law-generic-element} and Remark \ref{rem-ring-poly-laws}.
\end{proof}

\begin{Def}\label{def-pa}
Given $n_1,\ldots, n_t$, with $\sum n_i = n$, an algebra satisfying the
equivalent conditions in Proposition \ref{pro-def-pa} will be denoted
$A^{(n_1,\ldots, n_t)}$.
\end{Def}

To connect the $m$-closures with the constructions above we will show in
Proposition \ref{pro-poly-law-rel-with-m-closure} that the $m$-closure $A^{(m)}$
of an $R$-algebra of rank $n$ is isomorphic to $A^{(1,\ldots, 1,n-m)}$ (with $m$
ones). The following two lemmas come from Ferrand \cite[R\`egle 
$4.2.3$]{FonctNorme}.

\begin{Lem}\label{regle-f-is-g}
Let $A$ be an $R$-algebra of rank $n$ and $B$ be any $R$-algebra. Let $f,g, h
\colon \underline{A}\to \underline{B}$ be polynomial laws. Suppose we have
\[
s_n = fg = fh
\]
as polynomial laws. Then $g$ is equal to $h$.
\end{Lem}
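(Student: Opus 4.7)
The plan is to translate the equation of polynomial laws into an equation in the graded $R$-algebra $\mathscr{P}(A,B) \cong B \otimes_R \Sym(\dual{A})$ given by Remark \ref{rem-ring-poly-laws}, and then reduce everything to the statement that $s_n$ is a non-zerodivisor in this ring.

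First I would set $\xi = g-h \in B \otimes_R \Sym(\dual{A})$. The hypothesis $fg = fh$ reads $f\xi = 0$, and then
\[
s_n \cdot \xi = (fg)\cdot\xi = g\cdot(f\xi) = 0.
\]
So once we know that $s_n$ is a non-zerodivisor in $B \otimes_R \Sym(\dual{A})$, it follows that $\xi=0$ and hence $g=h$. The whole problem is thereby reduced to the following claim: for any $R$-algebra $B$ and any $R$-algebra $A$ of rank $n$, the image of $s_n$ in $B \otimes_R \Sym(\dual{A})$ is a non-zerodivisor.

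For this claim I would argue locally on $\mathrm{Spec}(R)$. Since $\Sym$ commutes with base change, localization commutes with $\otimes_R$, the norm $s_n$ is compatible with base change, and an element of any $R$-module is zero iff it vanishes in every localization at a prime $\mathfrak{p}$ of $R$, it suffices to treat the case where $A$ is free of rank $n$. Choose a basis $e_1,\ldots,e_n$ of $A$ with dual basis $X_1,\ldots,X_n$, so that $B \otimes_R \Sym(\dual{A}) \cong B[X_1,\ldots,X_n]$ and $s_n$ is identified with the polynomial $s_n(\gamma) = \det(\mathrm{mult}_{\gamma}) \in R[X_1,\ldots,X_n] \subseteq B[X_1,\ldots,X_n]$.

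Now I would apply McCoy's theorem: a polynomial in $B[X_1,\ldots,X_n]$ is a non-zerodivisor if and only if no nonzero element of $B$ annihilates all of its coefficients. Write $1 = \sum c_i e_i$ in $A$; by Proposition \ref{pro-gen-elem} there is a unique $R$-algebra map $\varphi_1\colon \Sym(\dual{A}) \to R$ sending $\gamma$ to $1$, namely the substitution $X_i \mapsto c_i$. Applying $\mathrm{Id}_A \otimes \varphi_1$ to $s_n(\gamma)$ gives $s_n(1) = 1$ in $R$, since multiplication by $1$ is the identity. This expresses $1 \in R$ as an explicit $R$-linear combination of the coefficients of $s_n$, so those coefficients generate the unit ideal in $R$, and therefore in $B$ as well. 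Hence no nonzero $b \in B$ can kill them all, and $s_n$ is a non-zerodivisor, which completes the proof. The main obstacle is this last step, i.e.\ the passage from ``$s_n$ takes the value $1$ at the identity'' to ``$s_n$ is a non-zerodivisor''; the rest is formal manipulation of the identification of polynomial laws with elements of $B \otimes_R \Sym(\dual{A})$.
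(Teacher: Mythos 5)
Your proof is correct, and it takes a genuinely different route from the paper's. The paper argues directly with polynomial laws: for arbitrary $R\to S$ and $x \in A\otimes_R S$ it evaluates the identity $s_n = fg = fh$ at $X + x$ over $S[X]$, observes that $s_{n,S[X]}(X+x)$ is monic (its leading coefficient is $s_n(1)=1$), notes that any factor of a monic polynomial is a regular element, cancels $f_{S[X]}(X+x)$ to get $g_{S[X]}(X+x) = h_{S[X]}(X+x)$, and specializes $X\mapsto 0$. You instead pass to the graded ring $B\otimes_R\Sym(\dual{A})$ via Remark \ref{rem-ring-poly-laws}, reduce to proving that $s_n$ (i.e.\ $s_n(\gamma)$) is a non-zero-divisor there, localize at primes of $R$ to reach the free case, and invoke McCoy's theorem together with the observation that the evaluation $\gamma\mapsto 1_A$ sends $s_n(\gamma)$ to $1$, so its coefficients generate the unit ideal. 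Both arguments ultimately rest on the same fact, $s_n(1)=1$, but deploy it differently: the paper turns it into monicity of a one-variable polynomial and uses elementary division; you turn it into a content statement and quote McCoy. Your approach isolates a reusable lemma (the norm element $s_n(\gamma)$ is a non-zero-divisor in $B\otimes_R\Sym(\dual{A})$ for every $R$-algebra $B$) at the cost of a localization step and an external citation; the paper's version is more self-contained and is reused nearly verbatim in the proof of Lemma \ref{regle-g-is-multiplicative}, where the ``evaluate at $X+x$'' device does extra work that the McCoy route would not give for free.
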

\begin{proof}
Let $S$ be an $R$-algebra and let $x$ be in $A\otimes_R S$. Then we have
\[
s_{n,S[X]}(X + x) = f_{S[X]}(X + x) g_{S[X]}(X + x) = f_{S[X]}(X + x) h_{S[X]}(X
+ x)
\]
The left hand side is monic because $s_n$ is multiplicative. Hence its factor
$f_{S[X]}(X + x)$ is a regular element of $B\otimes_R R'[X]$ (that is: it
is not a right nor a left zero divisor), so the equality above implies
$g_{S[X]}(X + x) = h_{S[X]}(X + x)$. Then the specialization $X \mapsto 0$
implies that $g_S(x) = h_S(x)$. Since $x$ was arbitrary, the proof is complete.
\end{proof}

\begin{Lem}\label{regle-g-is-multiplicative}
Let $A$ be an $R$-algebra of rank $n$ and $B$ be any $R$-algebra. Let $f,g
\colon \underline{A}\to \underline{B}$ be polynomial laws, with $f$
multiplicative. Suppose we have
\[
s_n = fg
\]
as polynomial laws. Then $g$ is multiplicative.
\end{Lem}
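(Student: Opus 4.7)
The plan mirrors the proof of Lemma \ref{regle-f-is-g}: I introduce two indeterminates $X, Y$ to replace $x, y$ with ``generic'' perturbations, derive a zero-divisor identity in a polynomial ring, cancel, and specialize. The unit axiom $g_R(1) = 1$ is immediate: since both $s_n$ and $f$ are multiplicative, $1 = s_n(1) = f(1)\,g(1) = g(1)$.

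For the main identity, fix an $R$-algebra $S$ and elements $x, y \in A \otimes_R S$. Set $\tilde{x} = X + x$ and $\tilde{y} = Y + y$ in $A \otimes_R S[X, Y]$. Multiplicativity of $s_n$ and of $f$, together with $s_n = fg$, give
\[
f(\tilde{x}\tilde{y})\,g(\tilde{x}\tilde{y}) = s_n(\tilde{x}\tilde{y}) = s_n(\tilde{x})\,s_n(\tilde{y}) = f(\tilde{x}\tilde{y})\,g(\tilde{x})\,g(\tilde{y}),
\]
so $f(\tilde{x}\tilde{y})\bigl(g(\tilde{x}\tilde{y}) - g(\tilde{x})\,g(\tilde{y})\bigr) = 0$ in $B \otimes_R S[X, Y]$.

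The key step is showing $f(\tilde{x}\tilde{y})$ is a non-zero-divisor in $B \otimes_R S[X, Y]$. By multiplicativity, $f(\tilde{x}\tilde{y}) = f(X+x)\,f(Y+y)$. Exactly as in Lemma \ref{regle-f-is-g}, the product $s_n(X + x) = f(X+x)\,g(X+x)$ coincides with the characteristic polynomial $P_{-x}(X)$, which is monic of degree $n$ in $X$ and therefore a non-zero-divisor in $B \otimes_R S[X]$; hence its factor $f(X+x)$ is a non-zero-divisor there, and remains one in $B \otimes_R S[X, Y]$. The same argument applied to the variable $Y$ shows $f(Y+y)$ is a non-zero-divisor, and so is their product. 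Cancelling yields $g(\tilde{x}\tilde{y}) = g(\tilde{x})\,g(\tilde{y})$ in $B \otimes_R S[X, Y]$. Specializing via the evaluation $X, Y \mapsto 0$, using the naturality of $g$ as a polynomial law, gives $g_S(xy) = g_S(x)\,g_S(y)$.

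The main obstacle is the cancellation step: one must use multiplicativity of $f$ to split $f(\tilde{x}\tilde{y})$ into two single-variable factors, since the available monicity argument controls $f(X+x)$ and $f(Y+y)$ separately but not $f(\tilde{x}\tilde{y})$ directly. Without the hypothesis on $f$ this splitting is unavailable.
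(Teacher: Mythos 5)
Your proof is correct and follows essentially the same approach as the paper's. The paper works in a single indeterminate $X$, establishing regularity of $f_{S[X]}(X+x)$ and $f_{S[X]}(X+y)$, multiplying to get the regular element $f_{S[X]}(X^2+(x+y)X+xy)$, cancelling, and specializing $X\mapsto 0$; you do the same thing with two indeterminates $X,Y$, replacing $x,y$ by $X+x$ and $Y+y$. Both versions hinge on the same two ideas: the monicity argument from Lemma \ref{regle-f-is-g} gives regularity of the single-variable factors, and multiplicativity of $f$ is exactly what lets you split $f$ of a product into a product of such factors. The two-variable version is marginally cleaner notationally (no quadratic substitution in one variable to track), at the small cost of having to remark that regularity in $C[X]$ or $C[Y]$ persists in $C[X,Y]$; you included the necessary observation, so the proof is complete. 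Your explicit check of the unit axiom $g(1)=1$ is a welcome minor addition that the paper leaves implicit.
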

\begin{proof}
As in the proof of Lemma \ref{regle-f-is-g}, for all $R\to S$ and $x, y$ in
$A\otimes_R S$ we have $f_{S[X]}(X + x)$ and $f_{S[X]}(X + y)$ are regular and
their product is
\[
f_{S[X]}(X^2 + (x+y)X + xy).
\]
So we have
\[
g_{S[X]}(X^2 + (x+y)X + xy) = g_{S[X]}(X+ x) g_{S[X]}(X+ y)
\]
because the product of both with $f_{S[X]}(X^2 + (x+y)X + xy)$ is equal to
$s_{n, S[X]}(X^2 + (x + y)X + xy)$. With the specialization $X\mapsto 0$ we get
$g(xy) = g(x)g(y)$, as we wanted to show.
\end{proof}

We are now ready to compare the algebras $A^{(1,\ldots, 1, n-m)}$ and $A^{(m)}$.

\begin{Pro}\label{pro-poly-law-rel-with-m-closure}
Let $A$ be an $R$-algebra of rank $n$. Then for all $m= 1,\ldots, n$ we have
$A^{(m)} \cong A^{(1,\ldots, 1,n-m)}$.
\end{Pro}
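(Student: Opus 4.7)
The plan is to construct $R$-algebra maps $\phi\colon A^{(1,\ldots,1,n-m)} \to A^{(m)}$ and $\psi\colon A^{(m)} \to A^{(1,\ldots,1,n-m)}$ via the two universal properties, and then invoke uniqueness to show they are mutually inverse.

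First I would construct $\phi$ by exhibiting $A^{(m)}$ as a valid candidate for the universal property of $A^{(1,\ldots,1,n-m)}$. The algebra maps $\alpha_1,\ldots,\alpha_m\colon A\to A^{(m)}$ already serve as degree-one polynomial laws, so the task reduces to producing a degree-$(n-m)$ polynomial law $\delta_{m+1}^{(m)}\colon \underline{A}\to\underline{A^{(m)}}$ with $s_n = \alpha_1\cdots\alpha_m\,\delta_{m+1}^{(m)}$. Applying the defining divisibility of $A^{(m)}$ to the generic element $\gamma\in A\otimes_R\Sym(\dual{A})$ yields a unique factorization $P_\gamma(X) = \prod_{i=1}^m(X-\alpha_i(\gamma))\,Q_\gamma(X)$ in $A^{(m)}\otimes_R\Sym(\dual{A})[X]$ with $Q_\gamma$ monic of degree $n-m$, and a degree count in the graded structure of $\Sym(\dual{A})$ places $Q_\gamma(0)$ in $A^{(m)}\otimes\Sym^{n-m}(\dual{A})$. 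Setting $\eta := (-1)^{n-m} Q_\gamma(0)$, Proposition \ref{lemma-polynomial-law-generic-element} furnishes a unique polynomial law $\delta_{m+1}^{(m)}$, homogeneous of degree $n-m$, with $\delta_{m+1}^{(m)}(\gamma) = \eta$. Specializing the factorization at $X=0$ gives $s_n(\gamma) = \prod_i\alpha_i(\gamma)\cdot\eta$, which by Remark \ref{rem-ring-poly-laws} amounts to the equality $s_n = \alpha_1\cdots\alpha_m\,\delta_{m+1}^{(m)}$ of polynomial laws. The universal property of $A^{(1,\ldots,1,n-m)}$ then yields $\phi$.

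For $\psi$, the task is to exhibit $C := A^{(1,\ldots,1,n-m)}$, equipped with the laws $\delta_1,\ldots,\delta_m$, as a valid candidate for the universal property of $A^{(m)}$. This requires (i) that each $\delta_i$ is an $R$-algebra homomorphism, and (ii) that $\prod_{i=1}^m(X-\delta_i(a))$ divides $P_a(X)$ in $C\otimes_R S[X]$ for all $R\to S$ and $a\in A\otimes_R S$. Granting (i), unital linearity gives $\delta_i(X-a) = X-\delta_i(a)$, so condition (2) of Proposition \ref{pro-def-pa} becomes $P_a(X) = \prod_{i=1}^m(X-\delta_i(a))\cdot\delta_{m+1}(X-a)$ and (ii) is immediate. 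The universal property of $A^{(m)}$ then produces $\psi$ with $\psi\circ\alpha_i = \delta_i$.

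The main obstacle is step (i): showing that every degree-one factor in the universal factorization $s_n = \delta_1\cdots\delta_m\,\delta_{m+1}$ is necessarily multiplicative and unital. This is precisely the role of Lemmas \ref{regle-f-is-g} and \ref{regle-g-is-multiplicative}: Lemma \ref{regle-g-is-multiplicative} propagates multiplicativity across a factorization of $s_n$, while Lemma \ref{regle-f-is-g} supplies the cancellation needed to isolate individual factors. The plan is to apply these lemmas iteratively along the decompositions $s_n = \delta_i\cdot\bigl(\prod_{j\neq i}\delta_j\cdot\delta_{m+1}\bigr)$, using that $s_n$ is multiplicative and that via $\phi$ the images of $\delta_i$ in $A^{(m)}$ are the multiplicative maps $\alpha_i$, possibly after base change to test rings where the unique factorization of $\det\in\Sym(\dual{A})$ into linear forms corresponds to evaluation at geometric points of $A$. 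Once $\phi$ and $\psi$ are both available, uniqueness in the two universal properties forces $\phi\circ\psi = \mathrm{id}_{A^{(m)}}$ and $\psi\circ\phi = \mathrm{id}_{C}$, since both compositions agree with the identity on the defining structural data.
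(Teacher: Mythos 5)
Your $\phi$-construction mirrors the paper's first step almost exactly: extract $\eta$ from the constant term of the monic quotient $Q_\gamma$ in $P_\gamma = Q_\gamma\prod_{i=1}^m(X-\alpha_{i}(\gamma))$, use Proposition \ref{lemma-polynomial-law-generic-element} to promote $\eta$ to a degree-$(n-m)$ polynomial law $\alpha$, invoke Lemma \ref{regle-g-is-multiplicative} for multiplicativity, and read off $s_n=\alpha\prod_i\alpha_i$. So far the two arguments coincide. They diverge in strategy afterward. The paper never builds a reverse map $\psi$: it verifies in one stroke that $(A^{(m)},(\alpha_i),\alpha)$ satisfies the universal property of $A^{(1,\ldots,1,n-m)}$, handing off the check to the universal property of $A^{(m)}$ together with Lemma \ref{regle-f-is-g} for the leftover degree-$(n-m)$ factor; it therefore never has to establish anything about the internal structure of the abstract $A^{(1,\ldots,1,n-m)}$, and in particular never proves that its degree-one laws $\delta_1,\ldots,\delta_m$ are ring homomorphisms.

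Your two-map strategy makes precisely that question the crux, and you correctly flag it as the main obstacle, but the plan you sketch does not close the gap. Lemma \ref{regle-g-is-multiplicative} lets you conclude $g$ is multiplicative in a factorization $s_n=fg$ only when $f$ is \emph{already known} to be multiplicative. In the decomposition $s_n=\delta_i\cdot\bigl(\prod_{j\ne i}\delta_j\cdot\delta_{m+1}\bigr)$, the complementary factor is built from exactly the laws whose multiplicativity you have not yet established, so the iteration has no base case and is circular. Likewise, the fact that $\phi\circ\delta_i=\alpha_i$ with $\alpha_i$ multiplicative yields only $\delta_i(ab)-\delta_i(a)\delta_i(b)\in\ker\phi$, which is not zero until you have independently shown $\phi$ injective -- circular again, since injectivity of $\phi$ is part of what you are trying to prove. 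The parenthetical appeal to ``base change to test rings where $\det$ factors into linear forms corresponding to geometric points'' is too vague to supply the missing input. The clean repair is to drop the construction of $\psi$ entirely and adopt the paper's one-sided argument: show $A^{(m)}$ itself satisfies the universal property of Proposition \ref{pro-def-pa}, after which there is nothing left to invert.
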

\begin{proof}
Recall that $A^{(m)}$ is given with $R$-algebra maps $\alpha_i\colon A\to
A^{(m)}$ for $i = 1,\ldots, m$. We define a polynomial law $\alpha\colon A\to
A^{(m)}$, multiplicative and homogeneous of degree $n-m$ and such that $\alpha
\prod_i \alpha_i$ is equal to $s_n$. Let $\gamma \in A\otimes \Sym(\dual{A})$
be the generic element of $A$. By the universal property of $A^{(m)}$ we have
that in $A^{(m)}\otimes \Sym(\dual{A})[X]$ the characteristic polynomial
$P_\gamma(X)$ is equal to
\[
Q_\gamma(X) \prod_{i=1}^{m} (X - \alpha_{i,\Sym(\dual{A})}(\gamma))
\]
for some polynomial $Q_\gamma(X)$. The constant term of $Q_\gamma(X)$ is an
element $\eta$ of $A^{(m)}\otimes \Sym(\dual{A})$, and by Lemma
\ref{lemma-polynomial-law-generic-element} this defines a unique polynomial
law $\alpha\colon \underline{A}\to \underline{A^{(m)}}$. This is
homogeneous of degree $n - m$ because $\eta$ is homogeneous of degree $n - m$
since the constant term of $P_\gamma$ is homogeneous of degree $n$ and the
constant term of $\prod (X - \alpha_{i,\Sym(\dual{A})}(\gamma) )$ is homogeneous
of degree $m$. Moreover $\alpha$ is multiplicative by Lemma
\ref{regle-g-is-multiplicative}.

We are left to show that $A^{(m)}$ with the maps $\alpha_i$ and the
polynomial law $\alpha$ has the universal property of $A^{(1,\ldots, 1,
n-m)}$. For every $R\to S$ and every element $a\in A\otimes_R S$ the
characteristic polynomial of $a$ splits as
\[
\alpha_{S[X]} (X- a)\prod_{i=1}^{m} (X - \alpha_{i,S}(a))
= \alpha_{S[X]} (X - a)\prod_{i=1}^m \alpha_{i,S[X]}(X - a).
\]
Let $B$ be an $R$-algebra given with linear maps $\beta_i\colon A\to B$ and a
polynomial law $\beta\colon A\to B$ homogeneous of degree $n-m$. Suppose that
for all $R\to S$ and all $a\in A\otimes_R A$ the polynomial $P_a(X)$ splits as
\[
\beta_{S[X]}(X - a) \prod_i \beta_{i,S[X]}(X - a)
\]
in $B[X]$. Since for all $i$ we have $\beta_{i,S[X]}(X - a) = (X -
\beta_{i,S}(a))$ by linearity, the universal property of $A^{(m)}$ gives a
unique map $\varphi\colon A^{(m)}\to B$ such that for all $i$ we have
$\varphi\circ \alpha_i = \beta_i$. Since $s_n$ is equal to both $(\varphi\circ
\alpha) \prod_i\beta_i$ and $\beta \prod_i\beta_i$, by Lemma \ref{regle-f-is-g}
we have $\varphi\circ \alpha = \beta$, so the proof is complete.
\end{proof}

We have not shown existence of the algebras $A^{(n_1,\ldots, n_t)}$. This is
done by Daniel Ferrand in \cite[\S $4.1$]{FonctNorme}, and we will not talk
about it here.

%%%%%%%%%%%%%%%%%%%%%%%%%%%%%%%%%%%%%%%%%%%%%%%%%%%%%%%%%%%%%%%%%%%%%%%%%%%%%%%

\section{Monogenic algebras}\label{sec-monogenic}

Let $R$ be a ring and let $f$ be a monic polynomial with coefficients in $R$. In
this section we will describe explicitly the closures of the $R$-algebra
$R[x]/(f)$ (a \emph{monogenic} $R$-algebra). To do this an important tool is the
following lemma.

Recall that for $P$ and $Q$ in $R[X]$ we write $P \mid Q$ for $P$ divides $Q$.

\begin{InLem}[Corollary \ref{cor-charpoly-of-a-poly}]
Let $R$ be a ring, and let $g$ be a polynomial with coefficients in $R$. Let $M$
be a free $R$-module of rank $n$ and $\alpha\in \mathrm{End}(M)$. Let $a_i\in R$
for $i=1, \ldots m$. Suppose that
\[
\prod_{i=1}^{m}(X-a_i)\mid P_\alpha(X)
\]
in $R[X]$, then also
\[
\prod_{i=1}^{m}(X-g(a_i))\mid  P_{g(\alpha)}(X)
\]
in $R[X]$.
\end{InLem}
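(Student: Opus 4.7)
The plan is to pass to a faithfully flat extension of $R$ in which $P_\alpha$ splits as a product of linear factors, to establish the identity $P_{g(\alpha)}(Y)=\prod_k(Y-g(\lambda_k))$ there, and then to descend the divisibility back to $R$. Concretely, write $P_\alpha(X)=D(X)Q(X)$ with $D(X)=\prod_{i=1}^m(X-a_i)$ and $Q\in R[X]$ monic of degree $n-m$, and let $R'$ be the universal splitting algebra of $Q$ over $R$. The algebra $R'$ is free of finite rank over $R$, so $R\hookrightarrow R'$, and in $R'[X]$ we have $Q(X)=\prod_{j=1}^{n-m}(X-b_j)$. Base-changing $\alpha$ to $R'$ therefore yields
$$P_\alpha(X)=\prod_{k=1}^n(X-\lambda_k)$$
in $R'[X]$, with $(\lambda_1,\ldots,\lambda_n)=(a_1,\ldots,a_m,b_1,\ldots,b_{n-m})$.

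The heart of the proof is the following \emph{splitting lemma}: if $\beta$ is an endomorphism of a free rank-$n$ module over any ring $S$ and $P_\beta(X)=\prod_k(X-\mu_k)$, then $P_{g(\beta)}(Y)=\prod_k(Y-g(\mu_k))$ in $S[Y]$. There are two clean ways to establish this. One is to use the resultant identity $P_{g(\beta)}(Y)=\mathrm{Res}_X\bigl(P_\beta(X),\,Y-g(X)\bigr)$, which holds for any endomorphism of a free module and which immediately gives $\prod_k(Y-g(\mu_k))$ once $P_\beta$ splits. The other is a universal specialization argument: the coefficients of $P_{g(\beta)}(Y)$ may be written as universal polynomials in the coefficients of $P_\beta$ and of $g$, so after specializing $\beta$ to the diagonal endomorphism with eigenvalues $\mu_1,\ldots,\mu_n$ the identity is transparent, and the general split case follows by specialization from the generic diagonal situation. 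Applying the lemma over $R'$ to $\beta=\alpha\otimes_R R'$ gives
$$P_{g(\alpha)}(Y)=\prod_{i=1}^m(Y-g(a_i))\cdot\prod_{j=1}^{n-m}(Y-g(b_j))$$
in $R'[Y]$, so the claimed divisibility holds over $R'$.

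The descent to $R$ is then formal: the divisor $\prod_{i=1}^m(Y-g(a_i))$ is monic with coefficients in $R$, so Euclidean division of $P_{g(\alpha)}(Y)$ by it in $R[Y]$ produces a unique remainder $T\in R[Y]$; after base change to $R'$ the same division remains valid, its remainder being the image of $T$, and by the previous step this image vanishes. Injectivity of $R\to R'$ forces $T=0$, so the divisibility holds already in $R[Y]$. The principal obstacle is the splitting lemma: an endomorphism of a free module over a general commutative ring with totally split characteristic polynomial need not be triangularizable on the nose, so the identity $P_{g(\beta)}(Y)=\prod(Y-g(\mu_k))$ has to be proved via the resultant or a universal specialization argument rather than by a direct triangularization.
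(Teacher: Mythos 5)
Your proposal is correct and takes a genuinely different route from the paper, which is worth spelling out. The paper proceeds through Theorem \ref{pro-def-phi-f}: it constructs a universal operation $\varphi_{n}$ on monic degree-$n$ polynomials (via elementary symmetric functions) that sends $\prod(Z-a_i)$ to $\prod(Z-g(a_i))$, proves the multiplicativity rule $\varphi_{n+m}(fg)=\varphi_n(f)\varphi_m(g)$, and then in Lemma \ref{lem-charpoly-of-a-poly} shows $\varphi_{R,n}(P_\alpha)=P_{g(\alpha)}$ by reducing to the generic matrix over $\mathbb{Z}[X_{r,s}]$ and then to its algebraic closure. The corollary then falls out by applying $\varphi_{R,n}$ to the factorization $P_\alpha=Q\cdot\prod(X-a_i)$ and invoking multiplicativity. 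You instead write $P_\alpha=Q\cdot D$, base-change to the universal splitting algebra $R'$ of $Q$ (finite free over $R$, hence $R\hookrightarrow R'$), apply the split-case identity $P_{g(\beta)}=\prod(Y-g(\mu_k))$ over $R'$, and descend by uniqueness of Euclidean division. What this buys you is that you never need to define $\varphi_n$ on non-split polynomials or prove its multiplicativity; the splitting algebra does that job for free.

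The one place to be careful is the justification of your splitting lemma. Your resultant route is clean: $\mathrm{Res}_X\bigl(P_\beta(X),\,Y-g(X)\bigr)$ is manifestly a polynomial in the coefficients of $P_\beta$, and the identity $P_{g(\beta)}(Y)=\mathrm{Res}_X\bigl(P_\beta(X),\,Y-g(X)\bigr)$ is a polynomial identity in the entries of $\beta$, so it reduces to the generic matrix over $\mathbb{Z}[X_{r,s}]\hookrightarrow\overline{\mathrm{Frac}}$, exactly as in the paper's Lemma \ref{lem-charpoly-of-a-poly}. Your second route, as phrased, is a bit circular: the assertion that the coefficients of $P_{g(\beta)}$ are ``universal polynomials in the coefficients of $P_\beta$'' is precisely the content that needs proving, since specializing from the generic \emph{diagonal} endomorphism only hits diagonal matrices, not your given $\beta$, and what bridges the gap is exactly the nontrivial fact that $P_{g(\beta)}$ factors through $P_\beta$. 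So stick with the resultant formulation (or the generic-matrix reduction as in the paper), and your argument is complete. You correctly identified this splitting lemma as the principal obstacle; neither approach avoids the reduction to the generic/algebraically-closed case, but your repackaging via the splitting algebra is an elegant alternative to the multiplicativity bookkeeping of Theorem \ref{pro-def-phi-f}.
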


We will need some preliminary results.

\begin{Def}
Let $R$ be a ring. We denote by $R[Z]_{n}^{\mathrm{mon}}$ the set of monic
polynomials of degree $n$ in $R[Z]$.
\end{Def}

\begin{Theo}\label{pro-def-phi-f}
Let $g \in R[X]$. For all $n \geq 0$ there exists a unique collection of maps
$(\varphi_{n, A})_{A}$ indexed over all $R$-algebras $A$, with $\varphi_{n, A}
\colon A[Z]^{\mathrm{mon}}_{n} \to A[Z]^{\mathrm{mon}}_{n}$, satisfying the
following conditions:
\begin{enumerate}
\item For all $A\to B$ the diagram
\[
\xymatrix{
A[Z]^{\mathrm{mon}}_{n} \ar[r] \ar[d]_{\varphi_{n, A}}&
B[Z]^{\mathrm{mon}}_{n} \ar[d]^{\varphi_{n, B}}\\
A[Z]^{\mathrm{mon}}_{n} \ar[r]&
B[Z]^{\mathrm{mon}}_{n}
}
\]
commutes.
\item For all $a_i\in A$ with $i = 1,\ldots, n$ we have
\[
\varphi_{n, A} \Bigg(\prod_{i=1}^n (Z - a_i)\Bigg) = \prod_{i=1}^n \left(Z -
g(a_i)\right).
\]
\end{enumerate}
Moreover, for all $n, m \geq 0$, for all $R\to A$, for all $f\in
A[Z]^{\mathrm{mon}}_{n}$ and for all $g\in A[Z]^{\mathrm{mon}}_{m}$ we have
\[
\varphi_n(f) \varphi_m(g) = \varphi_{n + m} (fg)
\]
in $A[Z]^{\mathrm{mon}}_{n +m}$.
\end{Theo}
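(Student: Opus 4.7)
The plan is to realize $\varphi_{n,A}$ as the specialization, at the universal monic polynomial of degree $n$, of a single polynomial constructed once and for all via the fundamental theorem on elementary symmetric polynomials. Let $R_n = R[e_1,\ldots,e_n]$ and let $F_n(Z) = Z^n - e_1 Z^{n-1} + \cdots + (-1)^n e_n \in R_n[Z]$, so that for every $R$-algebra $A$ and every $f \in A[Z]^{\mathrm{mon}}_n$ there is a unique $R$-algebra map $\psi_f \colon R_n \to A$ with $(\psi_f)_\ast F_n = f$, where $(\psi_f)_\ast$ denotes coefficient-wise application. Via $e_i \mapsto$ the $i$-th elementary symmetric polynomial in $T_1,\ldots,T_n$, one has an injective $R$-algebra map $\iota\colon R_n \hookrightarrow R[T_1,\ldots,T_n]$ (the fundamental theorem), under which $F_n$ is sent to $\prod_i (Z - T_i)$. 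The polynomial $\prod_i (Z - g(T_i)) \in R[T_1,\ldots,T_n][Z]$ has coefficients symmetric in the $T_i$, hence lies in the image of $\iota$; call its unique pre-image $\Phi_n \in R_n[Z]^{\mathrm{mon}}_n$.

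For existence, set $\varphi_{n,A}(f) := (\psi_f)_\ast \Phi_n$. Condition (1) is automatic from the uniqueness of $\psi_f$ and from $\psi_{\phi_\ast f} = \phi \circ \psi_f$ for any $\phi \colon A \to B$. For condition (2), given $f = \prod_i (Z - a_i)$ in $A[Z]$, let $\rho\colon R[T_1,\ldots,T_n] \to A$ send $T_i \mapsto a_i$; then $\rho \circ \iota \colon R_n \to A$ sends $F_n$ to $f$, so $\rho \circ \iota = \psi_f$ by uniqueness of the latter. Applying $\rho_\ast$ to $\iota(\Phi_n) = \prod_i (Z - g(T_i))$ yields $(\psi_f)_\ast \Phi_n = \prod_i (Z - g(a_i))$, as required.

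For uniqueness, let $\varphi'$ be another family satisfying (1) and (2). Applying (2) over $R[T_1,\ldots,T_n]$ to the split polynomial $\iota(F_n) = \prod_i(Z-T_i)$ yields $\varphi'_{n,R[T_1,\ldots,T_n]}(\iota(F_n)) = \prod_i(Z - g(T_i)) = \iota(\Phi_n)$. By condition (1) applied to $\iota$, together with the injectivity of $\iota$ on polynomial rings, we conclude $\varphi'_{n,R_n}(F_n) = \Phi_n$. Then (1) applied to each $\psi_f$ forces $\varphi'_{n,A}(f) = (\psi_f)_\ast\Phi_n = \varphi_{n,A}(f)$ for all $A$ and $f$.

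Multiplicativity is a twin universal argument: it suffices to verify the identity on the universal pair over $S = R[e_1,\ldots,e_n,\, e'_1,\ldots,e'_m]$ with universal polynomials $F_n$ and $F'_m$; via the analogous injection $S \hookrightarrow R[T_1,\ldots,T_n,\, U_1,\ldots,U_m]$ both split, so condition (2) shows both $\varphi_n(F_n)\varphi_m(F'_m)$ and $\varphi_{n+m}(F_n F'_m)$ equal $\prod_i(Z - g(T_i)) \cdot \prod_j(Z - g(U_j))$, and injectivity of the embedding descends the equality to $S$. The main subtlety throughout is the systematic use of the fundamental theorem of symmetric polynomials — both the injectivity of $R_n \hookrightarrow R[T_1,\ldots,T_n]$ and the identification of its image with the symmetric polynomials — to transfer statements freely between the universal monic polynomial and its splitting cover.
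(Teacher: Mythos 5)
Your proof is correct and takes essentially the same approach as the paper: construct $\varphi_{n,A}$ by first splitting the universal monic polynomial over $R[T_1,\ldots,T_n]$, invoke the fundamental theorem of symmetric polynomials to see that $\prod_i(Z-g(T_i))$ descends to coefficients in $R[e_1,\ldots,e_n]$, and then push forward to an arbitrary $A$ via the classifying map $\psi_f$; the multiplicativity argument likewise mirrors the paper's passage to $R[T_1,\ldots,T_n,U_1,\ldots,U_m]$. Your write-up is somewhat more explicit in separating the universal object $\Phi_n$ from its specializations and in spelling out the role of naturality, but the underlying argument is the same.
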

\begin{proof}
We prove uniqueness first. Suppose a map $\varphi_n$ is given, having the
properties above. Let $T$ be the $R$-algebra $R[X_1,\ldots, X_n]$, and let
$\Delta$ be the polynomial $\prod_{i} (Z - X_i)$ in $T[Z]^{\mathrm{mon}}_{n}$.
By property \emph{2} we have
\[
\varphi_{n, T} (\Delta) = \prod_{i=1}^n (Z - g(X_i)).
\]
Note that $\Delta$ can be written as
\[
Z^n - s_1 Z^{n-1} + s_2 Z^{n-2} + \cdots + (-1)^n s_n
\]
where $s_i$ is the $i$-th elementary symmetric function in the $X_i$. Since
$\prod_{i=1}^n (Z - g(X_i))$ is invariant under permutations of the $X_i$, by
the fundamental theorem of symmetric functions (see \cite[\S 6, Th\'eor\`eme
$1$]{BourbakiAlg4}), there exist $q_i$ in $S = R[s_1,\ldots, s_n]$ for $i =
1,\ldots, n$ such that 
\[
\varphi_{n, T}(\Delta) = Z^n + \sum_{i=1}^{n} q_i Z^{n-i}.
\]

Now let $A$ be any $R$-algebra and let
\[
P = Z^n + \sum_{i=1}^n (-1)^i a_i Z^{n-1}
\]
be a polynomial in $A[Z]^{\mathrm{mon}}_{n}$. Let $\pi$ be the map $S \to
A$ sending $s_i$ to $a_i$. Note that $\pi(\Delta)$ is equal to $P$, so by
property \emph{1} we have that
\[
\varphi_{n, A}(P) = \pi(\varphi_{n, T}(\Delta))
\]
in $A[Z]^{\mathrm{mon}}_{n}$. Hence uniqueness follows.

In proving uniqueness we also have constructed a map having properties
\emph{1} and \emph{2}, so existence is proved.

We now prove the last part. Let $T$ be $R[X_1,\ldots, X_n, Y_1,\ldots, Y_m]$,
and define
\[
\Delta_X = \prod_{i=1}^n (Z- X_i), \quad \quad \Delta_Y = \prod_{i=1}^m(Z- Y_i)
\]
in $T[Z]^{\mathrm{mon}}_{n}$ and $T[Z]^{\mathrm{mon}}_{m}$ respectively.
Note that we have
\[
\Delta_X = Z^n + \sum_{i=1}^n (-1)^i s_i Z^{n-i},\quad 
\Delta_Y = Z^m + \sum_{i=1}^m (-1)^i t_i Z^{m-i}
\]
where $s_i$ is the $i$-th elementary symmetric function in the $X_i$ for $i =
1,\ldots, n$ and $t_i$ is the $i$-th elementary symmetric function in the $Y_i$
for $i = 1,\ldots, m$. Denote by $S$ the subring $R[s_1,\ldots, s_n, t_1,\ldots,
t_m]$ of $T$. Note that $\varphi_{n,T}(\Delta_X)$ and $\varphi_{n, T}(\Delta_Y)$
also have coefficients in $S$. By property \emph{2} we have
\[
\varphi_{T, n+m}(\Delta_X\Delta_Y) = \varphi_{T, n}(\Delta_X)\varphi_{T,
m}(\Delta_Y)
\]
in $T[Z]^{\mathrm{mon}}_{n+m}$, so the same holds in
$S[Z]^{\mathrm{mon}}_{n+m}$.

For an $R$-algebra $A$ and polynomials
\[
f = Z^n + \sum_{i=1}^n (-1)^i a_i Z^{n-i}, \quad g = Z^m + \sum_{i=1}^m (-1)^i
b_i Z^{m-i}
\]
in $A[Z]^{\mathrm{mon}}_{n}$ and $A[Z]^{\mathrm{mon}}_{m}$ respectively, let
$\pi$ be the map $S \to A$ sending $s_i$ to $a_i$ for all $i = 1\ldots, n$ and
$t_i$ to $b_i$ for all $i = 1, \ldots, m$. Clearly $\pi$ sends $\Delta_X$ to $f$
and $\Delta_Y$ to $g$. Hence by property \emph{1}, we have that $\varphi_{A,
n}(f) \varphi_{A, m}(g) = \varphi_{A, n+m}(fg)$. The proof is complete.
\end{proof}

\begin{Ex}
We give an example to illustrate Proposition \ref{pro-def-phi-f}. Let $R$ be
$\mathbb{Z}$ and $g$ be $X^2$. For $n = 3$ consider 
\[
(Z- a)(Z - b)(Z - c)
\]
in $A[Z]^{\mathrm{mon}}_3$ for some $R$-algebra $A$. The product
\[
(Z - a^2)(Z - b^2)(Z - c^2)
\]
is equal to 
\[
Z^3 - (a^2 + b^2 + c^2)Z^2 + (a^2b^2 + a^2c^2 + b^2c^2)Z - a^2b^2c^2Z^3
\]
and for $s_1,\,s_2,\,s_3$ the elementary symmetric functions in $a, b, c$ this
is
\[
Z^3 - (s_1^2 - 2 s_2)Z^2 + (s_2^2 - 2s_1s_3)Z - s_3^2 Z^3.
\]
So the map $\varphi_{3, A}$ sends a polynomial
\[
Z^3 - r_1 Z^2 + r_2 Z - r_3 
\]
in $A[Z]^{\mathrm{mon}}_{3}$ to 
\[
Z^3 - (r_1^2 - 2 r_2)Z^2 + (r_2^2 - 2r_1r_3)Z - r_3^2 Z^3.
\]
\end{Ex}

\begin{Lem}\label{lem-charpoly-of-a-poly}
Let $R$ be a ring and $g$ be a polynomial with coefficients in $R$. Let $M$ be
a free $R$-module of rank $n$ and $\alpha \in \mathrm{End}(M)$. Then
$\varphi_{R, n}\colon R[Z]_n^{\mathrm{mon}} \to R[Z]_n^{\mathrm{mon}}$ sends
$P_\alpha(Z)$ to $P_{g(\alpha)}(Z)$.
\end{Lem}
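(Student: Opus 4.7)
The strategy is to universalize both the endomorphism $\alpha$ and the polynomial $g$ over $\mathbb{Z}$, reducing the statement to a universal identity in a polynomial ring over $\mathbb{Z}$. Such a ring embeds into an algebraically closed field, where the characteristic polynomial of $\alpha$ splits and classical triangularization applies. After fixing a basis to identify $M = R^n$, let $d$ be the degree of $g$ and consider the polynomial ring $U = \mathbb{Z}[E_{ij}, c_0, \ldots, c_d]$ in $n^2 + d + 1$ indeterminates (with $1\leq i,j\leq n$). Set $\alpha_U = (E_{ij}) \in \mathrm{End}(U^n)$ and $g_U(X) = \sum_k c_k X^k \in U[X]$. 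There is a ring homomorphism $\psi\colon U \to R$ sending $E_{ij}$ to the $(i,j)$-entry of $\alpha$ and $c_k$ to the $k$-th coefficient of $g$. Base change along $\psi$ sends $\alpha_U$ to $\alpha$ and $g_U(\alpha_U)$ to $g(\alpha)$, and since characteristic polynomials commute with base change, it sends $P_{\alpha_U}$ to $P_\alpha$ and $P_{g_U(\alpha_U)}$ to $P_{g(\alpha)}$. By property \emph{1} of Theorem \ref{pro-def-phi-f} the same map sends $\varphi_{U,n}(P_{\alpha_U}(Z))$ to $\varphi_{R,n}(P_\alpha(Z))$. Therefore it suffices to prove the equality $P_{g_U(\alpha_U)}(Z) = \varphi_{U,n}(P_{\alpha_U}(Z))$ in $U[Z]$.

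Since $U$ is a polynomial ring over $\mathbb{Z}$, it is an integral domain, so it embeds into an algebraic closure $\bar{K}$ of its fraction field, and the induced map $U[Z] \hookrightarrow \bar{K}[Z]$ is injective. It is thus enough to check the equality over $\bar{K}$. Over $\bar{K}$ the monic polynomial $P_{\alpha_U}(Z)$ splits as $\prod_{i=1}^n (Z - \lambda_i)$ for some $\lambda_i \in \bar{K}$, so by property \emph{2} of Theorem \ref{pro-def-phi-f} we have $\varphi_{\bar{K},n}(P_{\alpha_U}(Z)) = \prod_{i=1}^n (Z - g_U(\lambda_i))$. On the other hand, the classical triangularization theorem over an algebraically closed field provides a basis of $\bar{K}^n$ in which $\alpha_U$ is upper triangular with diagonal entries $\lambda_1, \ldots, \lambda_n$; then $g_U(\alpha_U)$ is upper triangular in the same basis with diagonal entries $g_U(\lambda_1), \ldots, g_U(\lambda_n)$, so $P_{g_U(\alpha_U)}(Z) = \prod_{i=1}^n (Z - g_U(\lambda_i))$. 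The two sides agree, and this completes the plan.

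The main obstacle is setting up the universalization so that both characteristic polynomials and the operator $\varphi_n$ are simultaneously compatible with the base change $\psi$; once that bookkeeping is in place, the remaining step is a standard fact from linear algebra over an algebraically closed field. Note that insisting on $\mathbb{Z}$-coefficients in $U$ (rather than $R$-coefficients) is essential, so that $U$ is an integral domain and one can pass to an algebraic closure regardless of the nature of $R$.
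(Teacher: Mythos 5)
Your proposal is correct, and at a high level it takes the same route as the paper: pass to a universal situation over a $\mathbb{Z}$-polynomial ring (hence an integral domain), embed into an algebraic closure of its fraction field where the characteristic polynomial splits, conclude there via the triangular form and property \emph{2} of Theorem~\ref{pro-def-phi-f}, and specialize back via property \emph{1}. The concrete difference is in the universalization step. The paper works over $\mathbb{Z}[(X_{r,s})]$ and universalizes only the matrix entries, leaving $g$ implicit; as written, that is a little loose, since $g\in R[X]$ does not naturally live over $\mathbb{Z}[(X_{r,s})]$, and one cannot simply replace $\mathbb{Z}$ by $R$ without losing integrality. You resolve this by adding indeterminates $c_0,\ldots,c_d$ for the coefficients of $g$ and working over $U = \mathbb{Z}[E_{ij}, c_k]$, so that both $\alpha$ and $g$ are simultaneously universalized and $U$ remains a domain. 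That is a cleaner and fully rigorous version of the paper's argument. The use of triangularization in place of Jordan normal form is an inessential (if slightly more economical) variant.

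One small point worth making explicit: when you apply property \emph{1} along $\psi\colon U\to R$, you are implicitly identifying the $\varphi_{n}$ defined over base $U$ with polynomial $g_U$ (restricted to $R$, viewed as a $U$-algebra via $\psi$) with the $\varphi_{n}$ defined over base $R$ with polynomial $g=\psi(g_U)$. This identification does hold — the restriction of the $U$-based family to $R$-algebras satisfies conditions \emph{1} and \emph{2} of Theorem~\ref{pro-def-phi-f} over $R$, so by the uniqueness statement it coincides with the $R$-based family — but it deserves a sentence, since a priori $\varphi_n$ depends on both $g$ and the base over which it is taken. The same remark applies to the paper's own proof.
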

\begin{proof}
Suppose first that $R$ is an algebraically closed field. We can write
$P_\alpha$ as
\[
\prod_{i=1,\ldots, s}(Z - a_i)^{e_i}
\]
for some $a_i \in R$ and $e_i > 0$ and since we can write the matrix
representing $\alpha$ in Jordan normal form we have that $P_{g(\alpha)}(Z)$
is
\[
\prod_{i=1,\ldots, s}(Z - g(a_i))^{e_i}
\]
Then the statement follows from Proposition \ref{pro-def-phi-f}.

Next suppose $R$ is the ring $\mathbb{Z}[(X_{r,s})_{r,s = 1, \ldots, n}]$ and
$\alpha$ is the endomorphism given by the matrix $X=(X_{r,s})_{r,s}$.
Consider the embedding $i\colon R\to K$, of $R$ into the algebraic closure of
its quotient field. Since $\varphi_{K, n}(i(P_X))$ is $i(P_{g(X)})$ and
$i$ is injective. By property \emph{1} in Proposition \ref{pro-def-phi-f} we
have that $\varphi_{R, n}(P_X)$ is equal to $P_{g(X)}$.

Let $R$ be any ring and choose a basis of $M$. Write $\alpha$ as a matrix, say
$\alpha = (a_{r,s})_{r,s = 1, \ldots, n}$. Let $\pi$ be the map
$\mathbb{Z}[(X_{r,s})_{r,s = 1, \ldots, n}]\to R$ sending $X_{r,s}$ to
$a_{r,s}$. The map $\pi$ sends $P_X$ to $P_{\alpha}$ and $P_{g(X)}$ to
$P_{g(\alpha)}$. By property \emph{1} of $\varphi_{R, n}$ in Proposition
\ref{pro-def-phi-f} we conclude that $\varphi_R(P_\alpha)=P_{g(\alpha)}$, as we
wanted to show.
\end{proof}

\begin{Lem}\label{cor-charpoly-of-a-poly}
Let $R$ be a ring, and let $g$ be a polynomial with coefficients in $R$. Let $M$
be a free $R$-module of rank $n$ and $\alpha\in \mathrm{End}(M)$. Let $a_i\in R$
for $i=1, \ldots m$. Suppose that
\[
\prod_{i=1}^{m}(Z-a_i)\mid P_\alpha(Z)
\]
in $R[Z]$, then also
\[
\prod_{i=1}^{m}(Z-g(a_i))\mid  P_{g(\alpha)}(Z)
\]
in $R[Z]$.
\end{Lem}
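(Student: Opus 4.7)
The plan is short: the corollary follows almost immediately from Lemma \ref{lem-charpoly-of-a-poly} combined with the multiplicativity of $\varphi$ established in Theorem \ref{pro-def-phi-f}.

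First, since $P_\alpha(Z)$ is monic of degree $n$ and $\prod_{i=1}^m(Z-a_i)$ is monic of degree $m$, the hypothesis gives a (necessarily unique) factorization
\[
P_\alpha(Z) = \prod_{i=1}^m(Z-a_i)\cdot Q(Z)
\]
with $Q\in R[Z]^{\mathrm{mon}}_{n-m}$. The point is that both factors are monic, so they land in the domains on which the maps $\varphi_{m,R}$ and $\varphi_{n-m,R}$ of Theorem \ref{pro-def-phi-f} are defined.

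Next, I apply $\varphi_{n,R}$ to both sides. By the final assertion of Theorem \ref{pro-def-phi-f} (multiplicativity of $\varphi$ with respect to the degrees), the right hand side becomes
\[
\varphi_{m,R}\!\left(\prod_{i=1}^m(Z-a_i)\right)\cdot\varphi_{n-m,R}(Q).
\]
By property \emph{2} in Theorem \ref{pro-def-phi-f}, the first factor equals $\prod_{i=1}^m(Z-g(a_i))$. On the other hand, Lemma \ref{lem-charpoly-of-a-poly} identifies $\varphi_{n,R}(P_\alpha)$ with $P_{g(\alpha)}$. Combining these gives
\[
P_{g(\alpha)}(Z) = \prod_{i=1}^m(Z-g(a_i))\cdot\varphi_{n-m,R}(Q),
\]
which is precisely the claimed divisibility in $R[Z]$.

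There is no serious obstacle here; the only thing to verify carefully is that the factorization of $P_\alpha$ has monic quotient so that the multiplicativity of $\varphi$ across $R[Z]^{\mathrm{mon}}_{m}\times R[Z]^{\mathrm{mon}}_{n-m}\to R[Z]^{\mathrm{mon}}_{n}$ applies, and this is automatic.
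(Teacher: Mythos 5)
Your proof is correct and is essentially identical to the paper's: both extract the monic quotient $Q$, apply the multiplicativity of $\varphi$ from Theorem \ref{pro-def-phi-f} to split $\varphi_{n,R}(P_\alpha)$ as $\varphi_{m,R}\bigl(\prod(Z-a_i)\bigr)\varphi_{n-m,R}(Q)$, and then invoke Lemma \ref{lem-charpoly-of-a-poly} and property \emph{2} to identify the two outer terms. No differences worth noting.
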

\begin{proof}
By assumption there exists a polynomial $Q$ in $R[Z]$ such that 
\[
P_{\alpha}(Z) = Q(Z) \prod_{i=1}^{m}(Z-a_i).
\]
Since both $P_{\alpha}$ and $\prod_{i}(Z-a_i)$ are monic also $Q$ must be
monic. Then by Proposition \ref{pro-def-phi-f} we have that
\[
\varphi_{R,n}\left(Q(Z) \prod_{i=1}^{m}(Z-a_i)\right) =
\varphi_{R,n-m}\left(Q(Z)\right)\varphi_{R,m}\left(\prod_{i=1}^{m}
(Z-a_i)\right).
\]
By Lemma \ref{lem-charpoly-of-a-poly} we have
\[
\varphi_{R, n}\left( P_{\alpha}(Z) \right) = P_{g(\alpha)}(Z)
\]
and by property \emph{2} in Proposition \ref{pro-def-phi-f} we have
\[
\varphi_{R, m} \left(\prod_{i=1}^{m}(Z-a_i)\right) = \prod_{i=1}^{m}(Z-g(a_i)).
\]
So the claim is proved.
\end{proof}

We use Corollary \ref{cor-charpoly-of-a-poly} to give an equivalent universal
property for $A^{(m)}$ when $A$ is monogenic.

\begin{Pro}\label{theo-monogenic-univ-prop}
Let $R$ be a ring, and let $A$ be $R[x]/f(x)$, with $f$ a monic polynomial. Let
$C$ be an $R$-algebra and let $\zeta_i\colon A\to C$ for $i= 1,\ldots, m$ be
$R$-algebra maps. Suppose $\prod_i(X-\zeta_i(x))$ divides $f(X)$ in $C[X]$, and
the pair $(C, (\zeta_i)_{i=1,\ldots, m})$ is universal with this property. Then
$(C,(\zeta_i)_{i=1,\ldots, m})$ is an $m$-closure of $A$.
\end{Pro}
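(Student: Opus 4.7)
The universal property of the $m$-closure of $A$ requires that for every $R$-algebra $R'$ and every $a \in A \otimes_R R'$ the polynomial $\prod_i(X - \zeta_i(a))$ divides $P_a$ in $C \otimes_R R'[X]$, whereas the hypothesis on $C$ only concerns the single element $a = x$ over the base ring itself. Since $A = R[x]/(f)$ is free of rank $n = \deg f$ over $R$ with $P_x = f$, the whole proof reduces to transporting the divisibility hypothesis through base change and through the substitution $x \mapsto g(x)$ that expresses an arbitrary element $a \in A \otimes_R R'$ as a polynomial in $x$. The key input is Corollary \ref{cor-charpoly-of-a-poly}.

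First I would verify that $(C,(\zeta_i))$ satisfies the divisibility condition from Definition \ref{universal-property}. Let $R \to R'$ be given and write $a = g(x) \in A \otimes_R R' = R'[x]/(f)$ for some $g \in R'[X]$. Base changing the hypothesis $\prod_i (X - \zeta_i(x)) \mid f(X)$ from $C[X]$ to $(C \otimes_R R')[X]$ is harmless. Regarding $M = (C \otimes_R R')[x]/(f)$ as a free module of rank $n$ over $C \otimes_R R'$ with endomorphism $\alpha$ given by multiplication by $x$, one has $P_\alpha = f$, so Corollary \ref{cor-charpoly-of-a-poly} applied with ring $C \otimes_R R'$, scalars $a_i = (\zeta_i \otimes \mathrm{Id})(x)$, and polynomial $g$, yields
\[
\prod_{i=1}^m \bigl(X - g(\zeta_i(x))\bigr) \,\Big|\, P_{g(\alpha)}(X).
\]
The left product equals $\prod_i (X - \zeta_i(a))$ because each $\zeta_i$ is an $R$-algebra map, and $P_{g(\alpha)}$ is the image of $P_a$ under the natural map $R'[X] \to (C \otimes_R R')[X]$, since characteristic polynomials are compatible with base change. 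Hence $\prod_i (X - \zeta_i(a)) \mid P_a$ in $C \otimes_R R'[X]$, as required.

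The universal property of $A^{(m)}$ is then immediate: given any $R$-algebra $B$ with maps $\beta_i \colon A \to B$ satisfying $\prod_i(X - \beta_i(a)) \mid P_a$ for every $R \to R'$ and every $a \in A \otimes_R R'$, the special case $R' = R$, $a = x$ gives $\prod_i(X - \beta_i(x)) \mid f(X)$ in $B[X]$. The assumed universal property of $C$ then produces a unique morphism $\varphi \colon C \to B$ with $\beta_i = \varphi \circ \zeta_i$ for all $i$, which is exactly the map demanded by the universal property of $A^{(m)}$. Uniqueness on the $A^{(m)}$ side is inherited from uniqueness on the $C$ side, so $(C,(\zeta_i))$ is an $m$-closure of $A$.

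The main obstacle is the central base-change step: setting up the free module and endomorphism correctly so that Corollary \ref{cor-charpoly-of-a-poly} applies with the right scalars, and verifying that $P_{g(\alpha)}$ computed over $C \otimes_R R'$ really is the image of $P_a$. Once that identification is in place, everything else is either base change or a direct appeal to the weaker universal property of $C$.
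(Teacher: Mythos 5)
Your proof is correct and follows essentially the same route as the paper's: the paper's entire argument is the two-sentence remark that $P_x = f$ and that Corollary~\ref{cor-charpoly-of-a-poly} handles an arbitrary $a = g(x\otimes 1)$; you have simply filled in the setup (identifying the free module, the endomorphism $\alpha$, the scalars $a_i = \zeta_i(x)\otimes 1$, and the base-change compatibility of $P_{g(\alpha)}$ with $P_a$) and made explicit the easy direction of the universality check.
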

\begin{proof}
Note that $P_x(X)$ is equal to $f(X)$. The theorem follows from Corollary
\ref{cor-charpoly-of-a-poly}, since for every $R\to R'$ and any element $a\in
A\otimes_R R'$ there exists a polynomial $g(X)$ in $R'[X]$ such that $g(x\otimes
1)$ is $a$.
\end{proof}

We can now give an explicit form for the $m$-closure of a monogenic algebra. The
construction is the same we described in the introduction for fields; we recall
it here: let $R$ be a ring and let $f$ be a monic polynomial in $R[X]$. Put $A_0
= R$ and $f_0 = f$. Given $A_i$ and $f_i$ for $i\geq 0$, define $A_{i+1}$ to be
$A_i[x_{i+1}] / f_{i}$, and $f_{i+1}$ as the quotient
\[
f_{i+1}(X) = \frac{f_i(X)}{(X-x_{i+1})}
\]
in $A_{i+1}[X]$. We show that $A_m$ is the $m$-closure of $R[x]/f$.

\begin{Theo}\label{theo-monogenic-description}
Let $R$ be a ring, and let $f$ be a monic polynomial in $R[X]$. Let $m$ be
between $0$ and $n$ and define maps $\alpha_i\colon R[X]/(f)\to A_m$ for $i = 1,
\ldots, n$ sending $x$ to $x_i$. Then $A_m$ together with the $\alpha_i$ for $i
= 1,\ldots, m$ is the $m$-closure of $R[X]/(f)$.
\end{Theo}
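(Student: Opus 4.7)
The plan is to apply Proposition \ref{theo-monogenic-univ-prop}, which reduces the task to verifying the ``weak'' universal property in terms of the distinguished generator $x$: namely, that in $A_m[X]$ the product $\prod_{i=1}^m (X - \alpha_i(x))$ divides $f(X)$, and that $(A_m,(\alpha_i)_{i=1,\ldots,m})$ is universal with this property. The divisibility is immediate from the construction: by definition of the polynomials $f_i$, one has the identity
\[
f(X) = \prod_{i=1}^{m}(X - x_i) \cdot f_m(X)
\]
in $A_m[X]$, and under the map $\alpha_i$ the generator $x$ is sent to $x_i$.

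The universal property I would prove by induction on $m$. The base case $m = 0$ gives $A_0 = R$, which is the $0$-closure by part \emph{2} of Proposition \ref{pro-Sclo-special-cases}. For the inductive step, suppose $A_{m-1}$ with maps $\alpha_1,\ldots,\alpha_{m-1}$ is the $(m-1)$-closure of $R[X]/(f)$, and let $B$ be an $R$-algebra together with maps $\zeta_i\colon R[X]/(f) \to B$ for $i = 1,\ldots, m$ such that $\prod_{i=1}^m (X - \zeta_i(x))$ divides $f(X)$ in $B[X]$. Since $\prod_{i=1}^{m-1}(X - \zeta_i(x))$ also divides $f(X)$ in $B[X]$, the inductive hypothesis yields a unique $R$-algebra map $\varphi_{m-1}\colon A_{m-1}\to B$ with $\varphi_{m-1}\circ \alpha_i = \zeta_i$ for $i<m$.

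Pushing the identity $f(X) = \prod_{i<m}(X - x_i)\cdot f_{m-1}(X)$ forward along $\varphi_{m-1}$ gives
\[
f(X) = \prod_{i=1}^{m-1}(X - \zeta_i(x)) \cdot \varphi_{m-1}(f_{m-1})(X)
\]
in $B[X]$. Here $\prod_{i<m}(X - \zeta_i(x))$ is monic, so uniqueness of division with remainder by a monic polynomial (combined with the hypothesis that $\prod_{i=1}^{m}(X - \zeta_i(x))$ divides $f(X)$) forces $(X - \zeta_m(x))$ to divide $\varphi_{m-1}(f_{m-1})(X)$ in $B[X]$. Evaluating at $\zeta_m(x)$ yields $\varphi_{m-1}(f_{m-1})(\zeta_m(x)) = 0$ in $B$, and so by the universal property of $A_m = A_{m-1}[x_m]/(f_{m-1}(x_m))$ the map $\varphi_{m-1}$ extends uniquely to an $R$-algebra map $\varphi\colon A_m \to B$ with $\varphi(x_m) = \zeta_m(x)$. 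Uniqueness of $\varphi$ is clear since $A_m$ is generated over $R$ by $x_1,\ldots,x_m$. The only subtle point is the monic division argument; once that is invoked, the inductive extension is essentially automatic.
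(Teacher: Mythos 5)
Your proof is correct and follows essentially the same route as the paper: both apply Proposition~\ref{theo-monogenic-univ-prop} to reduce the universal property to a divisibility condition on the distinguished generator $x$, and then prove universality by induction on $m$, extending the map $A_{m-1}\to B$ to $A_m$ via the presentation $A_m = A_{m-1}[x_m]/(f_{m-1})$. Your write-up is, if anything, slightly more careful at one point than the paper's: where the paper simply asserts that $\beta_m(x)$ is a root of (the image of) $f_{m-1}$ in $B$, you justify this by pushing the factorization $f = \prod_{i<m}(X-x_i)\,f_{m-1}$ forward to $B[X]$ and cancelling the monic factor $\prod_{i<m}(X-\zeta_i(x))$ against the hypothesis $\prod_{i\le m}(X-\zeta_i(x))\mid f$.
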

\begin{proof}
Note that the $\alpha_i$ are well defined since for all $i$ we have $f(x_i) = 0$
in $A_m$. For the same reason $\prod_i(X-x_i)$ divides $f$ in $A_m[X]$, so we
only need to prove $A_m$ is universal with this property.

Let $B$ be an $R$-algebra given with maps $\beta_i\colon A\to B$ for $i = 1,
\ldots, m$ and such that $\prod_i (X - \beta_i(x))$ divides $f$ in $B[X]$. We
need to prove there exists a unique map $\varphi\colon A_m\to B$ such that for
every $i$ we have $\beta_i = \varphi\circ \alpha_i$.

We prove existence by induction on $m$: if $m = 0$, then $A_0 = R$ and the
unique map $R\to B$ is the required one. Suppose we have a map
$\varphi\colon A_{m-1}\to B$ satisfying $\beta_i = \varphi\circ \alpha_i$ for
$i=1,\ldots, m-1$. Consider the induced map $A_{m-1}[X]\to B[X]$ and compose
with the map $B[X]\to B$ sending $X$ to $\beta_m(x)$. Since $\beta_m(x)$ is a
root of $f_{m-1}$ in $B[X]$, this map factors through $A_m$, giving the required
map.

Any $R$-algebra map with this property sends $x_i$ to $\beta_i(x)$ and hence
coincides with $\varphi$ because the $x_i$ generate $A_m$, so the proof is
complete.
\end{proof}

\begin{Cor}
Let $R$ be a ring and $f$ be a monic polynomial in $R[X]$ of degree $n$. Let
$A$ be the $R$-algebra $R[X]/(f)$. Then for all $0\leq m \leq n$ the algebra
$A^{(m)}$ is free of the expected rank $n(n-1)\cdots (n-m+1)$.
\end{Cor}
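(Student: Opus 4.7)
The plan is to invoke Theorem \ref{theo-monogenic-description}, which identifies $A^{(m)}$ with the iteratively constructed algebra $A_m$, and then prove by induction on $m$ that $A_m$ is free of the expected rank over $R$.

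The main observation is that at each step $0 \leq i < m$ the polynomial $f_i \in A_i[X]$ is monic of degree $n-i$. Indeed, $f_0 = f$ has degree $n$, and given that $f_i$ is monic of degree $n-i$ in $A_i[X]$, the element $x_{i+1}$ is a root of $f_i$ in $A_{i+1} = A_i[x_{i+1}]/(f_i(x_{i+1}))$, so the polynomial division
\[
f_{i+1}(X) = \frac{f_i(X)}{X - x_{i+1}}
\]
is exact in $A_{i+1}[X]$ and yields a monic polynomial of degree $n-i-1$. Hence the inductive step of the construction is well-posed.

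Since $f_i$ is monic of degree $n-i$, the quotient $A_{i+1} = A_i[x_{i+1}]/(f_i(x_{i+1}))$ is free as an $A_i$-module with basis $1, x_{i+1}, \ldots, x_{i+1}^{n-i-1}$. By induction, $A_i$ is free over $R$ of rank $n(n-1) \cdots (n-i+1)$ (the empty product giving rank $1$ in the base case $i=0$), so $A_{i+1}$ is free over $R$ of rank $n(n-1)\cdots(n-i)$ by transitivity of freeness. Iterating up to $i = m-1$ gives that $A_m$ is free over $R$ of rank $n(n-1) \cdots (n-m+1)$.

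Finally, by Theorem \ref{theo-monogenic-description} we have $A^{(m)} \cong A_m$ as $R$-algebras, which concludes the proof. There is no real obstacle here: the argument is essentially a bookkeeping of degrees in the successive quotients, once one has the explicit description of $A^{(m)}$ provided by the previous theorem.
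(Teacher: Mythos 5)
Your proof is correct and follows essentially the same route as the paper: both arguments invoke Theorem \ref{theo-monogenic-description} and then induct on the tower $A_0 \subset A_1 \subset \cdots \subset A_m$, using that $A_{i+1}$ is free of rank $n-i$ over $A_i$ because $f_i$ is monic of degree $n-i$. The paper's proof is a one-line version of this; you have merely spelled out the degree bookkeeping.
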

\begin{proof} 
This follows by induction since the description given above tells us that
$A^{(i)}$ is free of rank $n - i + 1$ over $A^{(i-1)}$ for all $1\leq i\leq n$.
\end{proof}

We will see in the next section (see Remark \ref{Am-not-locally-free}) that for
general free algebras it is not true that $A^{(m)}$ is even locally free.

%%%%%%%%%%%%%%%%%%%%%%%%%%%%%%%%%%%%%%%%%%%%%%%%%%%%%%%%%%%%%%%%%%%%%%%%%%%%%%%

\section{Examples and explicit computations}\label{galois-closure-examples}

In this section we will see some explicit computations of $S$-closures, together
with some techniques to simplify the computations.

Recall that for $A$ an $R$-algebra and $S$ any set we denote by $A^{\otimes S}$
the tensor power of $A$ indexed over $S$, defined in Definition
\ref{def-tensor-power-S}, and for each $s\in S$ we denote by $\varepsilon_s$ the
natural map.

\begin{Lem}\label{lem-sum-a-constant}
Let $R$ be a ring and $A$ be an $R$-algebra of rank $n$. Let $r$ be in $R$
and $a\in A$. For any finite set $S$ define
\[
\Delta_a = \prod_{s\in S}(X - \varepsilon_s(a)).
\]
Then $\Delta_a(X) \mid P_a(X)$ if and only if $\Delta_{a+r}(X) \mid P_{a +
r}(X)$.
\end{Lem}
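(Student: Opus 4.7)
The plan is to reduce everything to the observation that translating the argument by $r$ is a simultaneous translation of both polynomials, which is invertible. Specifically, since each $\varepsilon_s$ is an $R$-algebra map, for any $s \in S$ we have $\varepsilon_s(a+r) = \varepsilon_s(a) + r$, and therefore
\[
\Delta_{a+r}(X) = \prod_{s\in S}\bigl(X - \varepsilon_s(a) - r\bigr) = \Delta_a(X - r)
\]
in $A^{\otimes S}[X]$.

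Next I would establish the analogous identity for characteristic polynomials: $P_{a+r}(X) = P_a(X - r)$ in $R[X]$. This is immediate from the definition via $P_f(X) = \det(\Id\otimes X - f\otimes \Id)$ applied to multiplication by $a + r$: multiplication by $a+r$ on $A$ equals multiplication by $a$ plus $r\cdot\Id$, so $\Id\otimes X - (a+r)\otimes\Id = \Id\otimes(X-r) - a\otimes\Id$, and taking the determinant gives $P_{a+r}(X) = P_a(X-r)$. (Alternatively, one can argue locally using a free basis, where this is the familiar shift identity.)

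Finally, the substitution $X \mapsto X - r$ is an $R$-algebra automorphism of $A^{\otimes S}[X]$ (with inverse $X \mapsto X + r$), and such an automorphism preserves divisibility. Hence
\[
\Delta_{a+r}(X) \mid P_{a+r}(X) \iff \Delta_a(X-r) \mid P_a(X-r) \iff \Delta_a(X) \mid P_a(X),
\]
which is what we wanted. There is no real obstacle here; the only thing that requires a moment of care is the shift identity $P_{a+r}(X) = P_a(X-r)$, which I would either justify in one line from the determinant definition or by reducing to the free case and invoking the classical formula.
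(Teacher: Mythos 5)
Your proof is correct and follows exactly the same approach as the paper, which simply notes the two shift identities $P_{a+r}(X) = P_a(X-r)$ and $\Delta_{a+r}(X) = \Delta_a(X-r)$ and then calls the conclusion clear. You have merely spelled out the justifications for those identities and the observation that $X \mapsto X-r$ is a divisibility-preserving automorphism, all of which the paper leaves implicit.
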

\begin{proof}
Note that $P_{a +r}(X) = P_a (X - r)$ and $\Delta_{a + r}(X) = \Delta_{a}(X -
r)$. The statement is then clear.
\end{proof}

\begin{Rem}\label{rem-div-one-by-one}
Let $P(X)$ be a polynomial with coefficients in a ring $R$ and let $r_1,\ldots,
r_m$ be in $R$ with $m$ at most equal to the degree of $P$. Define
\[
\Delta(X)= \prod_{i=1}^m (X - r_i).
\]
We find an expression for the remainder of the division of $P(X)$ by
$\Delta(X)$. Write
\[
P(X) = Q(X) \Delta(X) + T(X)
\]
with degree of $T(X)$ smaller than $m$. Note that $Q(X)$ and $T(X)$ are
uniquely determined by these properties.

Let $Q_0(X)$ be $P(X)$ and for all $i \geq 1$ define
\[
Q_{i}(X) = \frac{Q_{i-1}(X)}{ (X - r_i)}.
\]
The remainder in the division of $P(X)$ by $(X - r_1)$ is $P(r_1)$, so we can
write:
\[
P(X) = Q_1(X) (X - r_1) + Q_0(r_1)
\]
Dividing $Q_1(X)$ by $(X - r_2)$ we get:
\[
Q_1(X) = Q_2(X) (X - r_2) + Q_1(r_2)
\]
so that we have
\[
P(X) = Q_2(X) (X - r_1)(X - r_2) + Q_1(r_2)(X - r_1) + Q_0(r_1)
\]
and by uniqueness, the quotient and remainder of the division of $P(X)$ by
$(X-r_1)(X-r_2)$ are $Q_2(X)$ and $Q_1(r_2)(X - r_1) + Q_0(r_1)$ respectively.

In the same way we can determine $Q(X)$ and $T(X)$ as follows:
\[
P(X) = Q_m(X) \Delta(X) + \sum_{i=0}^{m-1}\left(Q_i(r_{i+1})\prod_{k=1}^{i}
(X - r_k)\right).
\]
This will be used to compute $A^{(S)}$.
\end{Rem}

\subsection*{\texorpdfstring{Explicit generators of $J^{(S)}$}{Explicit
generators of J\^{}{(S)}}}

Let $R$ be a ring and $A$ be an $R$-algebra of rank $n$. As we proved in
Proposition \ref{pro-gal-constr}, the $S$-closure of $A$ is the quotient of
$A^{\otimes S}$ by an ideal that we denoted $J^{(S)}$. For convenience we recall
the definition of $J^{(S)}$ when $A$ is \emph{free} of rank $n$ (see Notation
\ref{not-ideal} and Example \ref{ex-S-closure-free}). Let $e_1,\ldots, e_n$ be a
basis of $A$ and let $X_1,\ldots, X_n$ be the dual basis. Let $\gamma \in
A[X_1,\ldots, X_n]$ be the generic element of $A$ and let $P_\gamma$ be its
characteristic polynomial. Let $\varepsilon_s$ be the natural map $A \to
A^{\otimes S}$ and define
\[
\Delta_\gamma(Z) = \prod_{s\in S} (Z - \varepsilon_s \otimes \mathrm{Id}
(\gamma))
\]
in $A^{\otimes S}[X_1,\ldots, X_n, Z]$.  For a multi-index $I = (i_1,\ldots,
i_n)$ write $X^I$ for $X_1^{i_1}\cdots X_n^{i_n}$. Let $T_\gamma$ be the
remainder in the division of $P_\gamma$ by $\Delta_\gamma$ in the ring
$A^{\otimes S} [X_1,\ldots, X_n, Z]$. Write the coefficient of $Z^i$ in
$T_\gamma$ as $\sum a_{I,i}X^I$ for $I$ ranging over multi-indices as above.
Then $J^{(S)}$ is the ideal of $A^{\otimes S}$ generated by the $a_{I,i}$.

We compute a set of generators of $J^{(S)}$ in case $A$ is a connected algebra
of rank $n$ over a field $K$. Recall from Lemma
\ref{lem-finite-algebra-over-field} that in this case $A$ is local with
nilpotent maximal ideal. For simplicity we will also take $S = \{1,\ldots, m\}$,
and suppose $m \leq n$.

Choose a basis $e_1,e_2,\ldots, e_n$ of $A$ such that $e_1 = 1$, and $e_2,
\ldots, e_n$ are in $\mathfrak{m}$, and let $X_1,\ldots, X_n$ be the dual basis.
The element $\gamma \in A[X_1,\ldots, X_n]$ can be written as $\sum e_iX_i$, as
we have seen in Example \ref{ex-gen-elem} and Example \ref{ex-gen-elem-two}.
Denote by $\gamma'$ the difference $\gamma - e_1X_1$, and define in the obvious
way the polynomials
\[
P_{\gamma'}(Z), \quad \Delta_{\gamma'}(Z), \quad T_{\gamma'}(Z)
\]
in $A[X_1,\ldots, X_n, Z]$. Write $\sum {a}_{I,i}'X^I$ for the coefficient of
$Z^i$ in $T_{\gamma'}(Z)$ and let ${J^{(m)}}'$ be the ideal of $A^{\otimes S}$
generated by the $a_{I,i}'$. By Lemma \ref{lem-sum-a-constant} we have that
$\Delta_{\gamma'}$ divides $P_{\gamma'}$ if and only if $\Delta_{\gamma}$
divides $P_{\gamma}$ so we can repeat the argument in Proposition 
\ref{pro-gal-constr} with ${J^{(m)}}'$ and we get that $A^{\otimes
m}/{J^{(m)}}'$ is the $m$-closure of $A$.

Let $\gamma_i'$ be the image of $\gamma'$ via the $i$-th natural map from
$A[X_1,\ldots, X_n]$ to the tensor power $A^{\otimes m}[X_1,\ldots, X_n]$. By
Remark \ref{rem-div-one-by-one} we can write
\[
T_{\gamma'}(Z) = \sum_{i=0}^{m-1}\left(Q_i(\gamma_{i+1}')\prod_{k=1}^{i} (Z -
\gamma_k')\right).
\]
and the ideal generated by the coefficients of $T_{\gamma'}$ is equal to the
ideal generated by the coefficients of $Q_i(\gamma_{i+1}')$.

Note that since $e_2,\ldots, e_n$ are nilpotent $P_{\gamma'}(Z)$ is equal
to $Z^n$. For $i \geq 0$ and $k = 1,\ldots, n$ define $d_i(\gamma_1',\ldots,
\gamma_k')$ to be the sum of all monomials of degree $i$ in the variables
$\gamma_1', \ldots, \gamma_k'$. Then we have
\begin{equation}
Q_i(\gamma_{i+1}') = d_{n-i}(\gamma_1',\ldots, \gamma_{i+1}') \in A^{\otimes m}
[X_1,\ldots, X_n] \label{formula-generators}
\end{equation}
for all $i  = 1,\ldots, m-1$. So $A^{(m)}$ is the quotient of $A^{\otimes m}$ by
the ideal generated by the coefficients of these polynomials.

\begin{Ex}\label{ex-not-bcinv}
For an $R$-algebra $A$ of rank $n$ in Remark \ref{rem-naive-def} we defined
$G^{(m)}(A/R)$ to be the quotient of $A^{\otimes m}$ by the ideal $I$ generated
by the coefficients of the remainder in the division of $P_a$ by $\prod_{i=1}^m
(X - \varepsilon_i(a))$, with $a$ ranging over $A$. In Remark
\ref{rem-naive-def} we said that this construction, modeled on the one by
Bhargava and Satriano, does not in general commute with base change for $m<n$.
We are going to give an example here.

Let $A$ be the $\mathbb{F}_2$-algebra $\mathbb{F}_2 [X_1,\ldots,X_4]/
(X_1,\ldots, X_4)^2$. We show that $G^{(3)}(A/\mathbb{F}_2)$ has dimension
$110$, while the dimension of $G^{(3)}(A\otimes_{\mathbb{F}_2}\mathbb{F}_4
/\mathbb{F}_4)$ is at most $109$, so this construction does not commute with
base change.

Let $\mathfrak{m}$ be the maximal ideal of $A$. Since any element $x$ in $A$
can be written as $x = r + m$, with $r\in \mathbb{F}_2$ and $m$ in
$\mathfrak{m}$, by Lemma \ref{lem-sum-a-constant} we can generate the ideal $I$
with the coefficients of the remainder in the division of $P_a$ by
$\prod_{i=1}^m (X - \varepsilon_i(a))$, with $a$ ranging over $\mathfrak{m}$,
instead of $A$. Since the characteristic polynomial of $a\in \mathfrak{m}$ is
$X^5$, the computations we did for $Q_i(\gamma_{i+1}')$ apply here as well. Then
we have
\[
I = \left\langle \sum_{i+ j = 4} a^i\otimes a^j\otimes 1,\, \sum_{i+j+k =
3}a^i\otimes a^j\otimes a^k \mid a\in \mathfrak{m} \right\rangle.
\]
But since multiplication of any two elements in $\mathfrak{m}$ is zero, most of
the terms in the expressions above are zero and we get
\[
I= \left\langle a\otimes a\otimes a\mid a\in \mathfrak{m}\right\rangle.
\]
Moreover for all $x = r + m \in A^{\otimes 3}$ we have that $x (a\otimes
a\otimes a)$ is equal to $r (a\otimes a\otimes a)$, since $a$ multiplied by any 
element in $\mathfrak{m}$ is zero, so the ideal $I$ equals the 
$\mathbb{F}_2$-vector space generated by the $a\otimes a\otimes a$. Then $I$ is
a vector space generated by $15$ non-zero vectors. So its dimension is at most
$15$, and the dimension of $G^{(3)}(A/\mathbb{F}_2)$ is at least $125 - 15 =
110$ (in fact it is $110$).

Now consider the same construction over $\mathbb{F}_4$. Here $G^{(3)}
(A\otimes_{\mathbb{F}_2}\mathbb{F}_4/\mathbb{F}_4)$ is equal to the quotient of
$A^{\otimes 3}\otimes_{\mathbb{F}_2}\mathbb{F}_4$ by the ideal
\[
J = \left\langle a\otimes a\otimes a\mid a\in \mathfrak{m}\otimes_{\mathbb{F}_2}
\mathbb{F}_4\right\rangle
\]
but in this case the dimension of $J$ is at least $16$. In fact, consider the
projection $\pi$ from $(\mathfrak{m}\otimes_{\mathbb{F}_2}
\mathbb{F}_4)^{\otimes 3}$ to the quotient $\mathrm{Sym}^3 (\mathfrak{m}
\otimes_{\mathbb{F}_2}\mathbb{F}_4)$, which is isomorphic to $(\mathbb{F}_4[T_1,
T_2, T_3, T_4])_3$, the $20$-dimensional vector space of homogeneous polynomials
of degree $3$ in $4$ variables over $\mathbb{F}_4$. The subspace $\pi(W)$ is
generated by third powers of polynomials of degree $1$. Consider
\[
\left(\sum_{i= 1}^4 {r_iT_i}\right)^3 =
\left(\sum_{i= 1}^4 {r_i^2 T_i^2}\right) \left(\sum_{i= 1}^4 {r_iT_i}\right)=
\sum_{i,j}r_i^2r_jT_i^2T_j
\]
with $r_i$ in $\mathbb{F}_4$. A computation shows that a basis for $\pi(W)$ is
\[
\begin{split}
&\{T_1^3,\, T_2^3,\, T_3^3,\,T_4^3, \\
&T_1^2T_2,\,T_1^2T_3,\, T_1^2T_4,\, T_2^2T_3,\, T_2^2T_4,\, T_3^2T_4,\\
&T_1T_2^2,\, T_1T_3^2,\, T_1T_4^2,\, T_2T_3^2,\, T_2T_4^2,\, T_3T_4^2\}
\end{split}
\]
so it has dimension $16$. Hence there exists a surjective map from $W$ to a
space of dimension $16$, and the dimension of $W$ is at least $16$. Then the
algebra $G^{(3)}(A\otimes_{\mathbb{F}_2}\mathbb{F}_4/\mathbb{F}_4)$ has
dimension at most $125 - 16 = 109$ (in fact the dimension is $105$). So this
construction does not commute with base change.
\end{Ex}

\subsection*{\texorpdfstring{Algorithm for computing $S$-closures}{Algorithm for
computing S-closures}}

As we pointed out in Example \ref{ex-S-closure-free} the construction using the
generic element given in Section \ref{sec-S-closures} is completely explicit. To
illustrate this, we give here an implementation of the construction using MAGMA
\cite{MAGMA}. The following code, has as input a finite commutative algebra $A$
over a field $K$, given with generators and relations, and an integer $m$. As
output it returns the $m$-closure of $A$. In the implementation, we use the
MAGMA command $\texttt{Algebra}$, which computes a basis and the multiplication
table of the input algebra $A$.

{\footnotesize
\begin{verbatim}
// Function: mClosure. Given an affine algebra A over a field K and an 
// integer m returns the m-closure of A.

mClosure := function(A, m)

// Preliminaries
relations := Generators( DivisorIdeal( A ) ); // Relations in A
gen := Rank( A ); // Number of generators in A
K := CoefficientRing( A ); // Base ring of A
// The ring A as a K-vector space (B) and the bijection A -> B
B, f := Algebra( A );
fInv := Inverse( f );
d := Dimension( B ); // The dimension of B over K

// Vector with multiplication matrices for basis elements in B
Mat := [ Matrix( K, d, d, [ ElementToSequence( B.i * B.j ) : i in
[1..d] ] ) : j in [1..d] ];

// Construction of the m-th tensor power of A
// Polynomial ring for defining the tensor power
PolyRing := PolynomialRing( K, m * gen );
// Sequence with variables in PolyRing
x := [ [ PolyRing.(i+m*j-m) : i in [1..m] ] : j in [1..gen] ];
// Relations between elements in x
TensorPowerAlgebra := quo< PolyRing | [ Evaluate( rel, [ x[i][j] :
i in [1..gen] ]) : j in [1..m ], rel in relations ] >;

// Embeddings of A into the tensor power
alpha := [ hom< A -> TensorPowerAlgebra | [ TensorPowerAlgebra!(x[i][j])
: i in [1..gen] ] > : j in [1..m] ];

// The ring where the coefficients of Delta and the characteristic
// polynomial of the generic element live
GenericRing := PolynomialRing( TensorPowerAlgebra, d );
// The matrix of multiplication by the generic element
MultMat := Matrix( GenericRing, d, d, [ [ &+ [ GenericRing.n *
GenericRing!Mat[n][j][i] : n in [1..d] ] : i in [1..d] ] : 
j in [1..d] ] );

// Construction of Delta and characteristic polynomial of gamma
// Polynomial ring with coefficients in GenericRing
Polynom<X> := PolynomialRing( GenericRing );
// a sequence gamma with gamma[i] the i-th embedding of gamma in GenericRing
gamma := [ &+ [ GenericRing!(alpha[i](fInv(Basis(B)[j]))) *  
GenericRing.(j) : j in [1..d] ] : i in [1..m] ];
// The polynomial Delta
if m ne 0 then
Delta := &* [ X - gamma[i] : i in [1..m] ];
else Delta := Polynom!1;
end if;
// Characteristic polynomial of the generic element
CharPoly := CharacteristicPolynomial( MultMat );

// Computation of the coefficients of the remainder in the division of
// CharPoly by Delta (the relations to quotient out)
CoeffRemainder := Coefficients(Polynom!CharPoly mod Delta);
ClosureRelations := [];
for i in [1..#CoeffRemainder] do
ClosureRelations := ClosureRelations cat Coefficients(CoeffRemainder[i]);
end for;

// Final computation of A^{(m)}
Am := quo< TensorPowerAlgebra | ClosureRelations >;
return Am;
end function;
\end{verbatim}}

\subsection*{\texorpdfstring{Dimensions of $A^{(m)}$ for $A$ of
dimension $\leq 6$ over an algebraically closed field}{Dimensions of
A\^{}{(m)} for A of dimension <=6 over an algebraically closed field}}

We used the code provided above to compute the dimension of $m$-closures of
all algebras of dimension $n\leq 6$ over an algebraically closed field $K$ of
characteristic $0$. We use the classification in \cite{PoonenIsoType}, by Bjorn
Poonen. Note that for $n \geq 7$ there exist infinitely many non-isomorphic
$K$-algebras (see references in \cite{PoonenIsoType}).

Note that we can reduce the computation to local algebras, since by Lemma
\ref{lem-finite-algebra-over-field} every finite $K$-algebra is a product of
local ones and by the product formula (see Theorem \ref{theo-prod-formula}) we
have:
\[
\dim (A_1\times A_2)^{(m)} = \sum_{k=0}^{m} \binom{m}{k}\dim A_1^{(k)}
\dim A_2^{(m-k)}.
\]

\begin{Exp} \label{exp-table}
Let $A$ be a local $K$-algebra with maximal ideal $\mathfrak{m}$. In Table
\ref{table-up-to-5} we consider all algebras with $n\leq 5$. We list
the dimension of $A$, the sequence $d = (\dim( \mathfrak{m}^i/
\mathfrak{m}^{i+1}))_{i\geq 1}$, the ideal defining $A$, and the dimension of
$A^{(m)}$ for $m = 2, \ldots, n-1$. We did not write the dimension of the
$n$-closure and of the $1$-closure: by Theorem \ref{theo-S-n-1} the
$(n-1)$-closure is isomorphic to the $n$-closure, and by number \emph{3} of
Proposition  \ref{pro-Sclo-special-cases} the $1$-closure is isomorphic to $A$.
The algorithm was run in these cases as well, and confirms the results (or
rather the results verify the algorithm). The table for dimension $6$ is at the
end (Table \ref{table-dimension-6}, page \pageref{table-dimension-6}).
\end{Exp}

{
\begin{table}[h!t]
\small\centering
\begin{tabular}{c|c|c||c|c|c}
\multicolumn{3}{c||}{}&\multicolumn{3}{c}{$\dim A^{(m)}$}\\
$n$ &
$d$&
Ideal&
$m=2$&
$m=3$&
$m=4$\\
\hline\hline
\rule{0pt}{3ex}$3$& $(1,\, 1)$ & $(x^3)$ &  $6$\\
& $(2)$& $(x, y)^2$ & $6$\\
\hline
\rule{0pt}{3ex}$4$& $(1,\,1,\,1)$ & $(x^4)$ & $12$& $24$\\
& $(2,1)$ & $(x^3, y^2, xy)$ & $13$& $26$\\
& & $(x^2, y^2)$ & $13$& $26$\\
& $(3)$ & $(x, y, z)^2$ & $16$& $32$\\
\hline
\rule{0pt}{3ex}$5$ & $(1,\, 1,\, 1,\, 1)$ & $(x^5)$ & $20$& $60$& $120$\\
& $(2,\, 1,\, 1)$ & $(x^2, y^4, xy)$ & $21$& $65$& $130$\\
& & $(x^2 + y^3, xy)$ & $21$& $65$& $130$\\
& $(2,\, 2)$ & $(x^3, y^2, x^2y)$ & $22$& $70$& $140$\\
& & $(x^3, y^3, xy)$ & $22$& $70$& $140$\\
& $(3, 1)$ & $(x^2, y^2, z^2, xy, xz)$ & $24$& $80$& $160$\\
& & $(x^2, y^2, z^3, xy, xz, yz)$ & $24$& $84$& $170$\\
& & $(x^2, y^2, xz, yz, xy + z^2)$ & $24$& $80$& $160$\\
& $(4)$ & $(x, y, z, w)^2$ & $25$& $105$& $220$\\
\end{tabular}
\caption{Dimension up to $5$, see Explanation \ref{exp-table}
\label{table-up-to-5}}
\end{table}
}

\begin{Rem}
Let $A$ be a $K$-algebra of dimension $n$. In Remark \ref{rem-expected-rank} we
called $n(n-1) \cdots (n-m+1)$ the \emph{expected rank} of the $m$-closure of
$A$. From the tables it is apparent that it is in fact quite rare that the rank
of the $m$-closure of a $K$-algebra $A$ is equal to the expected one: for the
algebras considered here this happens only for the $3$-dimensional $K$-algebra
$K[X,Y]/(X,Y)^2$, and for the cases already treated in the preceding sections,
like for monogenic algebras (see Theorem \ref{theo-monogenic-description}), and
for $m = 0,1$ or $m > n$ (see Proposition \ref{pro-Sclo-special-cases}). It
would be interesting to know whether the dimension of $A^{(m)}$ can be smaller
than the expected one, but no examples were found, nor a proof that this cannot
happen. This was asked for the Galois closure in \cite[Question $3$]{BhargSat}.
\end{Rem}

\begin{Rem}
The argument for the proof of Theorem $8$ in \cite{BhargSat} applies to
$m$-closures as well. So for all $m$ the dimension of the $m$-closure of a
$K$-algebra of dimension $n$ is at most the dimension of the $m$-closure of
$K[X_1,\ldots, X_{n-1}]/(X_1,\ldots, X_{n-1})^2$.
\end{Rem}

\begin{Rem}\label{Am-not-locally-free}
Note that Table \ref{table-up-to-5} and Table \ref{table-dimension-6} are
specific to characteristic $0$. In characteristic $2$ the algebra of dimension
$4$ defined by the ideal $(x^2, y^2)$ has $2$-closure of dimension $16$ and
$3$-closure of dimension $32$, and not $13$ and $26$ as in the table. In
particular (see also the discussion after theorem $6$ in \cite{BhargSat}), this
implies there exist free $\mathbb{Z}$-algebras with a non locally free
$m$-closure.
\end{Rem}

\subsection*{More computations about dimensions}

We list here some more results about the dimension of $m$-closures. The
$m$-closure of an algebra can sometimes be equal to the whole $A^{\otimes
m}$. Here is a sufficient condition.

\begin{Pro}
Let $A$ be a local algebra over a field $K$, with residue field $K$. Suppose
there exists $t\geq 2$ such that for all $a$ in the maximal ideal of $A$ we have
$a^t = 0$. Let $n$ be the dimension of $A$ and suppose $n - tm \geq 0$. Then
$A^{(m)}$ is $A^{\otimes m}$.
\end{Pro}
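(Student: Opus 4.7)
The plan is to show $J^{(m)} = 0$ using the explicit generators displayed just before the proposition. Recall from equation \eqref{formula-generators} that in our setting ($A$ local with residue field $K$ and $a\in\mathfrak{m}$ so that $P_a = X^n$) the ideal $J^{(m)}$ is generated by the coefficients in $X_1,\ldots,X_n$ of the polynomials
\[
d_{n-i}(\gamma_1',\ldots,\gamma_{i+1}') \;\in\; A^{\otimes m}[X_1,\ldots,X_n], \qquad i=0,1,\ldots,m-1,
\]
where $d_k$ is the sum of all monomials of degree $k$ and $\gamma_j' = \sum_{k=2}^{n}\varepsilon_j(e_k)X_k$ is the image of the ``nilpotent part'' of the generic element under the $j$-th embedding $\varepsilon_j\colon A\to A^{\otimes m}$. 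It therefore suffices to prove that each of these polynomials is identically zero.

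The key observation is that the hypothesis, applied to the generic element of $\mathfrak{m}$ (which lives in $\mathfrak{m}\otimes_K \Sym(\dual{\mathfrak{m}})$), forces $\mathfrak{m}^t = 0$ in $A$; equivalently, any product of $t$ elements of $\mathfrak{m}$ vanishes. Consequently $(\gamma_j')^{k_j}$ is a polynomial whose coefficients lie in $\varepsilon_j(\mathfrak{m}^{k_j})$, and hence it vanishes whenever $k_j \geq t$. Now every monomial occurring in $d_{n-i}(\gamma_1',\ldots,\gamma_{i+1}')$ is of the form $(\gamma_1')^{k_1}\cdots(\gamma_{i+1}')^{k_{i+1}}$ with $\sum_j k_j = n-i$. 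Such a monomial can be nonzero only if all $k_j \leq t-1$, which forces
\[
n - i \;=\; \sum_{j=1}^{i+1} k_j \;\leq\; (i+1)(t-1),
\]
i.e.\ $(i+1)t \geq n+1$. But for $0 \leq i \leq m-1$ we have $(i+1)t \leq mt \leq n$ by hypothesis, a contradiction. Hence every monomial vanishes, $d_{n-i}(\gamma_1',\ldots,\gamma_{i+1}') = 0$ for all $i = 0,\ldots,m-1$, and $J^{(m)} = 0$.

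The only genuinely delicate point is the passage from the pointwise hypothesis ``$a^t = 0$ for all $a\in\mathfrak{m}$'' to the uniform identity $\mathfrak{m}^t = 0$. I would handle this by invoking Proposition \ref{pro-gen-elem}: if the hypothesis is read as applying to every base change (which is how it must be used to interact with the generic element), then specializing the universal element $\gamma'$ of $\mathfrak{m}$ and comparing coefficients of $(\gamma')^t = 0$ in the polynomial ring $A\otimes_K\Sym(\dual{\mathfrak{m}})$ yields precisely that all products of $t$ generators $e_{j_1}\cdots e_{j_t} \in \mathfrak{m}^t$ are zero. Everything else in the argument is the combinatorial pigeonhole step, which is elementary once the explicit generating set for $J^{(m)}$ is in hand.
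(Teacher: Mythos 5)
Your proof goes via a genuinely different route than the paper's. The paper never computes $J^{(m)}$: instead it directly checks that $A^{\otimes m}$ with its natural maps already satisfies the defining divisibility condition. Concretely, for $a$ in (a base change of) $\mathfrak{m}$, since $a^t = 0$ one writes $X^t = (X-\varepsilon_i(a))\sum_{k=0}^{t-1}\varepsilon_i(a)^k X^{t-1-k}$, whence $\prod_{i=1}^m(X-\varepsilon_i(a))$ divides $X^{tm}$, which divides $X^n = P_a$ in $A^{\otimes m}\otimes_K K'[X]$ because $tm\le n$; the universal property then identifies $A^{(m)}$ with $A^{\otimes m}$. You instead verify $J^{(m)} = 0$ directly from the explicit generating set in \eqref{formula-generators} together with the pigeonhole count $n - i > (i+1)(t-1)$ for $0\le i\le m-1$. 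Both arguments are correct and comparable in length, and both ultimately reduce to the single fact $({\gamma'})^t = 0$, i.e.\ $\mathfrak{m}^t = 0$.

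The caveat you raise at the end is genuine, not merely stylistic, and in fact the paper shares it. Over a finite field, the pointwise condition ``$a^t = 0$ for every $a\in\mathfrak{m}$'' is strictly weaker than $\mathfrak{m}^t = 0$, and the paper's own proof quietly uses the stronger version (it asserts $a^t = 0$ for $a$ in the maximal ideal of an arbitrary $A\otimes_K K'$, which does not follow from the stated hypothesis). Indeed, for $K = \mathbb{F}_2$ and $A = \mathbb{F}_2[x,y]/(x^3, y^3, x^2y - xy^2)$ one has $n = 7$, $\mathfrak{m}^4 = 0$, and every $a\in\mathfrak{m}$ satisfies $a^3 = \alpha\beta(\alpha+\beta)\,x^2y = 0$ where $a\equiv\alpha x+\beta y\bmod\mathfrak{m}^2$ and $\alpha,\beta\in\mathbb{F}_2$; so $t=3$, and with $m=2$ we have $n - tm = 1\ge 0$. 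Yet $({\gamma'})^3 = x^2y\, X_2X_3(X_2+X_3)\neq 0$, and the generator $(\gamma_1')^3(\gamma_2')^3$ of $J^{(2)}$ has the nonzero coefficient $x^2y\otimes x^2y$, so $A^{(2)}\neq A^{\otimes 2}$. Your reading --- interpret the hypothesis as $\mathfrak{m}^t = 0$, equivalently $a^t = 0$ under arbitrary base change --- is exactly the correction under which both your proof and the paper's go through.
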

\begin{proof}
Let $K'$ be any $K$-algebra and let $a$ be in the maximal ideal of $A\otimes_K
K'$. Let $\varepsilon_i$ be the natural map $A\otimes_K K'\to A^{\otimes
m}\otimes_K K'$. Since $a^t$ is zero we have
\[
X^t = (X - \varepsilon_i(a)) \sum_{k=0}^{t-1} \varepsilon_i(a)^k X^{t-k-1}
\]
in $A^{\otimes m}\otimes_K K'[X]$ for $i = 1,\ldots, m$. The characteristic
polynomial of $a$ is $X^n$, since $a$ is nilpotent. We can write
\[
X^n = X^t \cdots X^t X^{n-tm} = \prod_{i=1}^{m}(X - \varepsilon_i(a)) X^{n-tm}
\prod_{i=1}^{m}\left(\sum_{k=0}^{t-1} \varepsilon_i(a)^k X^{t-k-1}\right).
\]
In particular $\prod_i (X - \varepsilon_i(a))$ divides the characteristic
polynomial of $a$ in $A^{\otimes m}\otimes_K K'[X]$. This proves our claim.
\end{proof}

The algebras of dimension $n$ (over a field) defined in the following
proposition have $2$-closure of dimension $n^2- 3$, which is bigger than the
expected $n^2 - n$.

\begin{Pro}
Let $K$ be a field. For every $n\geq 3$ let $A$ be the following $n$-dimensional
$K$-algebra:
\[
\begin{split}
A &= K[x,y]/(x^{\frac{n}{2}+1}, y^{\frac{n}{2}}, xy), \textrm{ for } n
\textrm{ even} \\
A &= K[x,y]/(x^{\frac{n+1}{2}}, y^{\frac{n+1}{2}}, xy), \textrm{ for } n
\textrm{ odd.}
\end{split}
\]
Then $A^{(2)}$ has dimension $n^2 - 3$.
\end{Pro}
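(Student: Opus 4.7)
The plan is to compute $A^{(2)}$ as the quotient $A^{\otimes 2}/J^{(2)}$ using the explicit description of $J^{(S)}$ worked out earlier in this section, and to show $\dim_K J^{(2)} = 3$. Consider first the even case $n = 2k$, so $A = K[x,y]/(x^{k+1}, y^k, xy)$ with monomial basis $\{1, x, \ldots, x^k, y, \ldots, y^{k-1}\}$. Let $X_1, \ldots, X_k, Y_1, \ldots, Y_{k-1}$ be the dual basis on the nilpotent part and set $\gamma' = \sum_{i=1}^k x^i X_i + \sum_{j=1}^{k-1} y^j Y_j$. Since $A$ is local with nilpotent maximal ideal one has $P_{\gamma'}(Z) = Z^n$, hence $(\gamma_1')^n = 0$ by Cayley--Hamilton, and $J^{(2)}$ is the ideal generated by the coefficients (as a polynomial in the dual variables) of
\[
S \;=\; \sum_{j=0}^{n-1}(\gamma_1')^j (\gamma_2')^{n-1-j}.
\]

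Write $\gamma_\alpha' = u_\alpha + v_\alpha$ with $u_\alpha = \sum_i x_\alpha^i X_i$ and $v_\alpha = \sum_j y_\alpha^j Y_j$. The relation $xy = 0$ gives $u_\alpha v_\alpha = 0$, so $(\gamma_\alpha')^m = u_\alpha^m + v_\alpha^m$ for $m \geq 1$, and the nilpotency bounds $x_\alpha^{k+1} = 0 = y_\alpha^k$ then force $(\gamma_\alpha')^m = 0$ for $m \geq k+1$. Hence only $j = k-1$ and $j = k$ survive in the sum $S$, leaving $S = (\gamma_1')^{k-1}(\gamma_2')^k + (\gamma_1')^k (\gamma_2')^{k-1}$. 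Expansion modulo the vanishing relations gives $u_\alpha^k = x_\alpha^k X_1^k$, $u_\alpha^{k-1} = x_\alpha^{k-1} X_1^{k-1} + (k-1)\,x_\alpha^k X_1^{k-2} X_2$, $v_\alpha^{k-1} = y_\alpha^{k-1} Y_1^{k-1}$ and $v_\alpha^k = 0$. Multiplying out, $S$ has exactly three nonzero coefficients, namely
\[
r_1 = x_1^{k-1} x_2^k + x_1^k x_2^{k-1}, \quad r_2 = 2(k-1)\,x_1^k x_2^k, \quad r_3 = x_1^k y_2^{k-1} + x_2^k y_1^{k-1},
\]
picked up on the monomials $X_1^{2k-1}$, $X_1^{2k-2} X_2$, and $X_1^k Y_1^{k-1}$ respectively.

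It remains to show $\dim_K (r_1, r_2, r_3) = 3$. Multiplying $r_1$ by $x_1$ yields $x_1^k x_2^k$, while every other product of $r_1$ by a basis monomial of $A^{\otimes 2}$ vanishes by the defining relations, so $r_2$ is absorbed into $(r_1)$. Likewise $r_3$ is annihilated by multiplication with every non-unit, because $x_\alpha y_\alpha = 0$ and the nilpotency bounds kill each candidate product. Thus $J^{(2)}$ is spanned over $K$ by $\{r_1,\, x_1^k x_2^k,\, r_3\}$, three visibly linearly independent elements in the monomial basis of $A^{\otimes 2}$, which gives $\dim_K A^{(2)} = n^2 - 3$. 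The odd case $n = 2\ell+1$, $A = K[x,y]/(x^{\ell+1}, y^{\ell+1}, xy)$, follows the same script: now $(\gamma_\alpha')^m = 0$ for $m \geq \ell+1$ so only $j = \ell$ contributes, and $S = (x_1^\ell X_1^\ell + y_1^\ell Y_1^\ell)(x_2^\ell X_1^\ell + y_2^\ell Y_1^\ell)$ has three coefficients $x_1^\ell x_2^\ell$, $x_1^\ell y_2^\ell + y_1^\ell x_2^\ell$, $y_1^\ell y_2^\ell$, each killed by every non-unit and mutually linearly independent, so again $\dim_K J^{(2)} = 3$. The argument is essentially bookkeeping; the only conceptual point is that the relation $xy = 0$ combined with the nilpotency bounds collapses $S$ to just a handful of terms.
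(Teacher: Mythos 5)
Your proposal is correct and follows essentially the same route as the paper: reduce $J^{(2)}$ to the single generator $d_{n-1}(\gamma_1',\gamma_2')=\sum_{j}(\gamma_1')^j(\gamma_2')^{n-1-j}$ (the $Q_0$ term $(\gamma_1')^n$ vanishing by Cayley--Hamilton), collapse the sum via $xy=0$ and the nilpotency bounds to one or two surviving terms, read off the coefficients on the dual-variable monomials, and check that the resulting ideal has $K$-dimension $3$. One remark on the comparison: your explicit list of generators in the even case, $r_1=x_1^{k-1}x_2^k+x_1^kx_2^{k-1}$, $r_2=2(k-1)x_1^kx_2^k$, $r_3=x_1^ky_2^{k-1}+y_1^{k-1}x_2^k$, is actually more accurate than what is printed in the paper's proof, which lists $x^t\otimes y + y\otimes x^t$ and $x^t\otimes x+x\otimes x^t$; those only coincide with the correct generators when $t=2$, so the paper appears to contain typos (presumably $y^{t-1}$ and $x^{t-1}$ were intended, and the third generator $x^t\otimes x^t$ was omitted because it lies in $(r_1)$). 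Your observation that $r_2$ is redundant --- because $x_1r_1=x_1^kx_2^k$ already lies in $(r_1)$, regardless of whether $2(k-1)$ vanishes in $K$ --- also handles positive characteristic cleanly, which the paper glosses over. The one thing worth tightening in your write-up is the bridge sentence invoking ``the explicit description of $J^{(S)}$ worked out earlier in this section'': it is the translation $\gamma\mapsto\gamma'=\gamma-\varepsilon(1)X_1$ (Lemma \ref{lem-sum-a-constant}) together with Remark \ref{rem-div-one-by-one} that lets one replace the full $T_\gamma$-coefficient ideal by the $Q_i(\gamma_{i+1}')$-coefficient ideal, and it would be good to cite that explicitly rather than take it as read.
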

\begin{proof}
Let $\gamma \in A[Z_1,\ldots, Z_n]$ be the generic element of $A$ and let
$\gamma'$ be $\gamma - Z_1$, the generic element of the maximal ideal of $A$.
Denote by $\gamma_i'$ for $i = 1, 2$ the image of $\gamma'$ via the base change
of the natural maps $\varepsilon_i\colon A\to A\otimes_R A$. Let $I$ be the
ideal in $A\otimes_R A$ such that $A\otimes_R A/I$ is $A^{(2)}$. By Formula
\ref{formula-generators}, page \pageref{formula-generators}, the ideal $I$ is
generated by
\[
{\gamma_1'}^{n-1} + {\gamma_1'}^{n-2} \gamma_2' + \cdots + \gamma_1'
{\gamma_2'}^{n-2} + {\gamma_2'}^{n-1},
\]
and by the relations in $A$ this is equal to
\[
\begin{split}
 {\gamma_1'}^{\frac{n}{2}}{\gamma_2'}^{\frac{n}{2} -1} &+
{\gamma_1'}^{\frac{n}{2} -1}{\gamma_2'}^{\frac{n}{2}},\, \textrm{ for } n
\textrm{ even} \\
 {\gamma_1'}^{\frac{n-1}{2}}&{\gamma_2'}^{\frac{n-1}{2}},\, \textrm{ for } n
\textrm{ odd.} \\
\end{split}
\]
Suppose now $n$ is odd and let $t$ be $\frac{n-1}{2}$. Write 
\[
\gamma'= \sum_{i=1}^{t}x^iZ_i + \sum_{i=1}^t y^i Z_{t +i} 
\]
Then $\gamma'^t$ is $x^tZ_1^t +y^tZ_t^t$, and the ideal $I$ is generated by
\[
x^{t} \otimes x^t, \quad y^t \otimes y^t, \quad x^t \otimes y^t + y^t \otimes
x^t.
\]
It is now easy to see that the dimension of $I$ is $3$. For $n$ even let $t =
\frac{n}{2}$ and write
\[
\gamma'= \sum_{i=1}^{t}x^iZ_i + \sum_{i=1}^{t-1} y^i Z_{t-1 +i}.
\]
Computing ${\gamma'}_1^{t}{\gamma_2'}^{t-1} + {\gamma'}_1^{t-1}{\gamma_2'}^{t}$
we see that $I$ is generated by
\[
x^t\otimes y + y\otimes x^t,\quad x^t \otimes x + x\otimes x^t.
\]
Again one can compute the dimension of $I$, and this is again $3$.
\end{proof}

It would be interesting to know the possible values for the dimensions of
$A^{(m)}$. We conclude with the promised table with the dimensions of the
$m$-closures of algebras of dimension $6$ over an algebraically closed field of
characteristic $0$.

{\enlargethispage{2cm}
\begin{table}[ht]
\small\centering
\begin{tabular}[t]{@{}c@{}|@{}c@{} || @{}c@{} | @{}c@{} | @{}c@{} | @{}c@{}
}
\multicolumn{2}{c||}{}&
\multicolumn{4}{c}{$\dim A^{(m)}$}\\
$d$&
Ideal&
$\phantom{\_}m{=}2\phantom{\_}$&
$\phantom{\_}m{=}3\phantom{\_}$&
$\phantom{\_}m{=}4\phantom{\_}$&
$\phantom{\_}m{=}5\phantom{\_}$\\
\hline\hline
\rule{0pt}{3ex}$(1,\,1,\,1,\,1,\,1)$& $(x^6)$ & $30$& $120$& $360$& $720$\\ 
$(2,\,1,\,1,\,1)$& $(x^2, y^5, xy)$ & $31$  &  $129$  &  $393$ & $785$\\
& $(x^2 + y^4, xy)$ &  $31$  &  $129$  &  $393$  & $785$\\
$(2,\, 2,\, 1)$& $(xy, x^3, y^4)$&  $33$  &  $141$  &  $436$  & $870$\\
& $(xy, x^3 + y^3)$ &  $33$  &  $138$  &  $422$  & $840$\\
& $(x^2, xy^2, y^4)$&  $33$  &  $145$  &  $453$  & $905$\\
& $(x^2, y^3)$ &  $33$  &  $142$  &  $439$  &  $875$\\
& $(x^2 + y^3, xy^2, y^4)$ &  $33$  &  $144$  &  $450$  &  $900$\\
$(2,\,3)$& $(x, y)^3$ &  $36$  &  $165$  &  $539$  &  $1085$\\
$(3,\,1,\,1)$& $(x^2, xy, y^2, xz, yz, z^4)$ & $34$ & $160$ & $520$ & $1045$\\
& $(x^2, xy, y^2 + z^3, xz, yz, z^4)$ &  $34$  &  $154$  &  $488$ & $975$\\
& $(x^2, xy + z^3, y^2, xz, yz, z^4)$ &  $34$  &  $154$  &  $488$ & $975$\\
$(3,\,2)$& $(xy, yz, z^2, y^2 - xz, x^3)$ & $36$ & $168$ & $540$ & $1080$\\
& $(xy, z^2, xz - yz, x^2 + y^2 - xz)$&  $36$  &  $168$  &  $540$ & $1080$\\
& $(x^2, xy, xz, y^2, yz^2, z^3)$ &  $36$  &  $176$  &  $587$  & $1185$\\
& $(x^2, xy, xz, yz, y^3, z^3)$ &  $36$  &  $172$  &  $570$ & $1150$\\
& $(xy, xz, y^2, z^2, x^3)$ &  $36$  &  $168$  &  $538$  &  $1075$\\
& $(xy, xz, yz, x^2 + y^2 - z^2)$ &  $36$  &  $164$  &  $516$  & $1028$ \\
& $(x^2, xy, yz, xz + y^2 - z^2)$&  $36$  &  $164$  &  $518$  & $1033$\\
& $(x^2, xy, y^2, z^2)$ &  $36$  &  $170$  &  $546$  &  $1090$\\
$(4, 1)$& $(x^2, y^2, z^2, xy, xz, xw, yz, yw, zw, w^3)$&  $36$  & $195$  & 
$707$  &  $1457$\\
& $(x^2, y^2, z^2, w^2, xy, xz, xw, yz, yw)$&  $36$  &  $193$  & $667$ &
$1352$\\
& $(x^2, y^2 + zw, z^2, w^2, xy, xz, xw, yz, yw)$ &  $36$  &  $193$ & $661$  & 
$1334$\\
& $(x^2, y^2, z^2, w^2, xy - zw, xz, xw, yz, yw)$ &  $36$  &  $193$ & $661$  & 
$1334$\\
$(5)$& $( x, y, z, w, v )^2$ &  $36$  &  $216$  &  $876$  &  $1875$\\
\end{tabular}
\caption{Dimension $6$, see Explanation \ref{exp-table}
\label{table-dimension-6}}
\end{table}
}

%%%%%%%%%%%%%%%%%%%%%%%%%%%%%%%%%%%%%%%%%%%%%%%%%%%%%%%%%%%%%%%%%%%%%%%%%%%%%%

%######  Bibliography  ######

%######    ######    ######

\end{document}